\documentclass[final]{amsart}

\usepackage{booktabs}
\usepackage{placeins}
\usepackage{fullpage}
\usepackage{tristan}
\usepackage{algorithm}
\usepackage{algpseudocode}

\usepackage[textsize=tiny,textwidth=2cm]{todonotes}
\usepackage{csquotes}

\usepackage{pgfplots}
\usepackage{pgfplotstable}
\usepackage{subcaption}
\usepackage{tcolorbox}
\usepackage{tikz}
\usepackage{tikz-cd}
\usepackage[giveninits=true,eprint=false,url=false,style=alphabetic]{biblatex}

\renewcommand{\cS}[0]{{\ensuremath{\mathbb{S}}^{2}}}

\newcommand{\vertiii}[1]{{\left\vert\kern-0.25ex\left\vert\kern-0.25ex\left\vert #1
    \right\vert\kern-0.25ex\right\vert\kern-0.25ex\right\vert}}

\usepackage{xcolor,cancel}

\author[1,2]{Tristan Pryer}

\author[3]{Nikolaos Rekatsinas}

\address{$^1$ Institute for Mathematical Innovation\\ University of
  Bath, Bath, UK. $^2$ Department of Mathematical Sciences
  \\ University of Bath, Bath, UK. $^3$ Institute of Computational and
  Applied Mathematics, Foundation for Research and Technology (FORTH),
  GR 70013, Crete, Greece}

\pgfplotsset{compat=1.18}
\usepackage[normalem]{ulem}

\renewcommand{\new}[1]{{\color{black}{#1}}}

\newcommand{\rev}[1]{{\color{black}#1}}
\hypersetup{hypertexnames=false}
\makeatletter
\renewcommand{\subsection}{\@ifstar{\oldsubsection*}{\stepcounter{custom}\oldsubsection}}
\makeatother

\begin{document}

\title{Residual Minimisation for Transport-Based Optimal Control}

\begin{abstract}
We consider an optimal control problem governed by a stationary linear
transport equation in two or three spatial dimensions, motivated by
photon transport in radiotherapy treatment planning. The control is an
inflow boundary source, and the observation operator may track either
the angular flux or its angular integral, the latter serving as a
simplified dose surrogate. The associated optimality system consists
of a transport--adjoint pair with angular dependence, coupled through
a boundary optimality condition.

We formulate a residual least-squares functional whose minimisers
coincide with solutions of the Karush--Kuhn--Tucker system and show
that  \new{the } functional controls the $L^2$ \new{errors in the state and adjoint
and the flux-weighted $L^2$ error in the control} . This yields
a continuum-level a posteriori estimate without recourse to
discretisation. \new{An additional unweighted optimality residual certifies the control in the norm of the objective.} \new{The estimate holds for every admissible approximation and accounts for the different boundary measures in the control penalty and transport traces, without a positive lower bound on $|\omega\cdot n|$.} We then parameterise the state, adjoint and control
using neural networks and minimise over a neural network space using
appropriate quadrature. \new{The Monte Carlo formulation replaces each
residual integral by a weighted sample mean, whose quadrature error
enters the a posteriori estimate.}

We prove convergence under \new{explicit assumptions on approximation, quadrature and
optimisation and derive } bounds on the total error in terms of
approximation, quadrature and optimisation error. Numerical examples in two and
three dimensions \new{illustrate the residual--error relation and the
flexibility of the residual framework for field- and dose-tracking control
problems, including regular manufactured solutions}.
\end{abstract}

\keywords{Transport equations, boundary control, optimality systems,
residual stability, a posteriori error estimates, neural networks}
\subjclass[2020]{49K40, 49K20, 35F15, 65N15, 68T07}

\maketitle

\section{Introduction}

Optimal control problems governed by transport equations arise in a
range of applied settings, from radiation therapy and shielding design
to optical engineering and neutron flux regulation. In such problems,
the control is typically applied at the inflow boundary of the domain,
and the system response propagates along characteristics with angular
structure and attenuation \cite{revFrankHertySandjo2010}.

This work is motivated by photon-based radiotherapy treatment
planning, where an incident fluence is chosen to shape the radiation
field within the patient
\cite{revBortfeld2006,revFrankHertySandjo2010}. Related
transport-control questions arise in proton therapy, where energy loss
and biological response also enter the model
\cite{ashby2025efficient}. Here we isolate the boundary-control
structure using a linear Boltzmann-type equation with pure
attenuation.  The control enters through the inflow boundary, and we
consider either angular-flux tracking or tracking of its angular
integral, the latter providing a simplified dose surrogate. The target-shape computations in Section~\ref{sec:numerics} compare angular-flux tracking with direct dose tracking. In the field-tracking
examples, the angular integral is reported as an additional spatial diagnostic. Scattering, energy dependence, nonnegative delivery constraints and more complete dose models are left outside
the present formulation.

We formulate a residual least-squares functional that encodes the full
KKT system for the transport-based control problem. The residual
functional \new{$\mathcal J_b$} is quadratic, non-negative and vanishes only
at the solution. We show that \new{$\mathcal J_b$ } provides a rigorous a
posteriori estimate at the continuum level: its value controls the
error in all components of the solution triple $(u, z, g)$. This
enables residual-based convergence analysis and error certification
independently of discretisation. \new{The control error is measured in the
transport boundary norm, with weight $|\omega\cdot n|$.
The distinction between this norm and the unweighted control penalty
is essential: the normal velocity enters the boundary optimality
condition and may vanish at grazing directions. We establish stability
without dividing by this velocity or excluding nearly grazing rays.}

The unknowns are approximated using smooth feedforward neural networks
and trained to minimise \new{a quadrature approximation of $\mathcal J_b$ using
gradient-based methods}. \new{The networks provide an approximation
class on position-angle space, and automatic differentiation
supplies the directional derivatives in the residual. The analysis
starts from the continuum optimality system, so the stability estimate
also applies to other approximations for which these residuals are
well-defined.} We analyse the resulting approximate
residual minimisers under both deterministic and Monte Carlo
quadrature and prove convergence under \new{explicit approximation, quadrature and
optimisation hypotheses. Numerical experiments examine the relation
between residual and error and } demonstrate the feasibility of the
approach in two and three dimensions.

\rev{Residual minimisation of full optimality systems is established in least-squares finite element methods \cite{revBochevGunzburger2006}.}
\rev{For distributed optimal control, least-squares norm equivalence and reliable error estimation is obtained in \cite{revFuhrerKarkulik2023}.} Adaptive regularisation for elliptic control is studied in
\cite{powerPryer2025adaptive}. \rev{For transport equations, stable Petrov--Galerkin and least-squares formulations relate suitable residual norms to approximation errors
\cite{dahmen2012adaptive,revLiuZhang2020}.} \rev{Flux-weighted trace spaces are established in the forward transport framework of \cite{dahmen2012adaptive}}.
\rev{For the present control problem, the state and adjoint estimates must be combined with the
boundary optimality equation. The relevant stability question therefore
concerns the coupled system, including the error in its unknown
inflow data.} \rev{Here the boundary optimality equation couples an unweighted
control penalty to flux-weighted transport traces. We therefore
distinguish residual estimates for the control in the transport norm
from those in the norm used by the objective.}

\new{Neural network approaches to PDE-constrained optimisation include
\cite{revMowlaviNabi2023,lu2021physics}. In particular, joint neural
approximation of the state, adjoint and control through optimality
conditions is already used in \cite{revBarryStraume2026}. The recent
KKT-based method of \cite{revZhang2026} combines adversarial sampling
with scaling to address sensitivity to the regularisation parameter.}
\new{Deep Uzawa methods treat PDE and boundary constraints through
multipliers \cite{makridakis2024deep,makridakis2024deepbcs}, while the related
stationary transport method \cite{makridakis2025kineticuzawa}
enforces prescribed inflow data. Here the inflow is itself the control,
and its optimality condition couples the boundary trace of the adjoint
to the regularisation term. This coupling must be retained when
relating the residual to errors in all three unknowns.}

The main contribution is a global residual-stability analysis for the
transport boundary-control KKT system. It accounts simultaneously for
the unweighted control penalty and the flux-weighted transport traces,
without excluding nearly grazing directions. For each fixed
$\alpha>0$, the continuum residual controls the state and adjoint
errors in $L^2$ and the control error in the transport boundary norm,
independently of the particular approximation method.
\rev{A counterexample supported near grazing directions shows that
convergence of this weighted residual to zero need not imply convergence of
the control in the unweighted norm appearing in the objective.
Thus a small weighted KKT residual need not guarantee that the computed control is nearly optimal for the original control problem, as explained in
Remark~\ref{rem:grazing-control}.}
Strengthening only the optimality residual yields control in the unweighted
norm appearing in the objective and hence a bound on reduced-objective
suboptimality\rev{, while retaining flux-weighted state and adjoint
boundary residuals}.

This stability estimate separates the numerical error into
approximation, quadrature and optimisation contributions. For Monte
Carlo quadrature we distinguish estimates for a fixed approximation
from uniform estimates over a bounded training class, since a network
selected using the sampled loss is itself sample-dependent.
Deterministic quadrature fits the same residual-error framework.
The numerical experiments test the coupled state--adjoint--control
approximation in two and three dimensions, compare target-control
examples with a characteristic reference, and separately verify the
fixed-field Monte Carlo quadrature rate.

\new{The rest of the paper is organized as follows.
Section~\ref{sec:problem} introduces the control problem and establishes
well-posedness of the transport equations and their coupled optimality
system. Section~\ref{sec:residual-stability} characterises the residual
minimisers and proves the global stability estimate.
Section~\ref{sec:NN} describes the neural network parameterisation, quadrature
loss and minimisation algorithm. Section~\ref{sec:quadrature} relates
the discrete loss to the solution error through Monte Carlo and
deterministic quadrature estimates. Approximation and convergence are
considered in Section~\ref{sec:approx-convergence}. The numerical
results are presented in Section~\ref{sec:numerics}, followed by
conclusions in Section~\ref{sec:conclusions}.}

\section{Problem Setup}\label{sec:problem}

We study an optimal control problem arising in photon transport for
radiotherapy treatment planning. The aim is to determine an incident
flux that delivers a prescribed dose to a target region within tissue,
modelled as a bounded, convex \new{Lipschitz domain $\Omega\subset\mathbb R^d$,
with $d=2$ } or $3$. The state of a photon is described by its spatial
position \new{$\vec x\in\Omega$ and direction $\vec\omega\in\mathbb S^{d-1}$.
The flux $u=u(\vec x,\vec\omega)$ represents the photon intensity at
the corresponding} phase-space point $(\vec x, \vec \omega)$.

The full radiative transport model incorporates absorption, scattering
and external sources, but to clarify the control structure we first
consider the simplified setting without scattering or internal
sources. In this case, the transport of photons is governed by a pure
attenuation equation:
  \begin{equation}\new{\label{eq:transport-pde}
 \vec\omega\cdot\nabla_{\vec x}u+\sigma(\vec x)u=0
 \quad\text{in }\Omega\times\mathbb S^{d-1},
}\end{equation}
with inflow boundary condition
  \begin{equation}\new{\label{eq:transport-bc}
 u|_{\Gamma^-}=g.
}\end{equation}
\new{Here $\sigma\in L^\infty(\Omega)$ is nonnegative, and the inflow and
outflow boundaries are
}\[
 \new{\Gamma^\pm=\{(x,\omega)\in\partial\Omega\times\mathbb S^{d-1}:
                      \pm\omega\cdot n(x)>0\}.
}\]
The control variable is the inflow flux $g$, prescribed on $\Gamma^-$,
and the forward field $u$ solves
\eqref{eq:transport-pde}\new{-}\eqref{eq:transport-bc}. For treatment
planning, we aim to deliver dose to a region    \new{$T\subset\Omega$.
In the present model, the angular integral
}\[
 \new{D(u)(x)=\int_{\mathbb S^{d-1}}u(x,\omega)\,d\omega
}\]
\new{provides a simplified dose surrogate. We may therefore match
either the angular flux itself or its angular integral to a prescribed
target. We write $\mathcal O u$ for the chosen quantity: $\mathcal O=I$
for angular-flux tracking and $\mathcal O=D$ for dose tracking, with
$Y=L^2(\Omega\times\mathbb S^{d-1})$ and $Y=L^2(\Omega)$,
respectively.
}

We \new{consider the constrained minimisation problem}
\begin{equation}\new{\label{eq:corrected-objective}
 \min_{(u,g)}\ \frac{1}{2}\|\mathcal O u-X_T\|_Y^2
       +\frac{\alpha}{2}\|g\|_{L^2(\Gamma^-)}^2
 \quad\text{subject to }\eqref{eq:transport-pde},\ \eqref{eq:transport-bc}.
}\end{equation}
This is a linear-quadratic optimal control problem with PDE constraint
and \emph{boundary control}, where \new{$\alpha>0$ balances fidelity to
the target and control cost. The target $X_T$ belongs to $Y$. For angular-flux tracking it may
depend on both $x$ and $\omega$. A spatial target is understood to be
constant in the angular variable. The analysis below applies to
any bounded linear $\mathcal O:L^2(\Omega\times\mathbb S^{d-1})\to Y$.
\rev{The minimisation is over controls without a sign constraint.}}
\new{Quadratic dose-tracking objectives for kinetic treatment planning
are also studied in \cite{revBarnard2012}.}

 \subsection{\new{Transport graph space and boundary trace}}\label{subsec:trace}
\new{We write $Q=\Omega\times\mathbb S^{d-1}$, $H=L^2(Q)$ and
$b(x,\omega)=|\omega\cdot n(x)|$. The control cost in
\eqref{eq:corrected-objective} uses the unweighted boundary space
$U=L^2(\Gamma^-,dS_x\,d\omega)$, whereas integration by parts for
transport uses
}\[
 \new{G_\pm=L^2(\Gamma^\pm,b\,dS_x\,d\omega).
}\]

\new{The space $U$ describes the control penalty, $G_\pm$ describe the
transport boundary traces, and the space $W$ below describes the state
and adjoint. These are different roles. In particular,
\[
 \|h\|_{G_-}^2=\int_{\Gamma^-}b|h|^2\,dS_x\,d\omega
 \le \int_{\Gamma^-}|h|^2\,dS_x\,d\omega=\|h\|_U^2.
\]
Thus $U\hookrightarrow G_-$ continuously. The converse bound fails
near grazing directions. In the control objective we penalise the
incident intensity with respect to boundary area and solid angle, thus
the flux factor $b$ instead enters the transport identity. This choice
of control penalty is retained throughout.}
All angular integrals use surface measure on $\mathbb S^{d-1}$.
If $v\in H$ and $\omega\cdot\nabla_xv\in H$, then along almost every
characteristic the restriction of $v$ has an $H^1$ representative and
therefore well-defined endpoint values. We denote these characteristic
endpoint values by $\gamma^-v$ and $\gamma^+v$ whenever the resulting
boundary functions belong to $G_-$ and $G_+$, respectively.

We define
\begin{equation}\label{eq:corrected-trace-space}
 W=\{v\in H:\omega\cdot\nabla_xv\in H,\
                   \gamma^-v\in G_-,\ \gamma^+v\in G_+\},
\end{equation}
with norm
\[
 \|v\|_W^2
 =\|v\|_H^2+\|\omega\cdot\nabla_xv\|_H^2
  +\|\gamma^-v\|_{G_-}^2+\|\gamma^+v\|_{G_+}^2.
\]
Thus, the existence of finite global transport traces is part of the
definition of $W$. The transport graph norm alone need not control both
boundary traces. The final two terms in $\|\cdot\|_W$ make this
requirement explicit. This distinction also occurs in the transport
framework of \cite{dahmen2012adaptive}.

\begin{lemma}[Green identity for the transport operator]
\label{lem:trace-green}
\new{The trace maps $\gamma^\pm:W\to G_\pm$ are linear and continuous,
with
}\[
 \new{\|\gamma^\pm v\|_{G_\pm}\le\|v\|_W\qquad(v\in W).
}\]
\new{Let $T=\omega\cdot\nabla_x+\sigma$ and
$T^*=-\omega\cdot\nabla_x+\sigma$. For $v,w\in W$,
}\begin{equation}\new{\label{eq:corrected-green}
 (Tv,w)_H-(v,T^*w)_H
 = (\gamma^+v,\gamma^+w)_{G_+}
      -(\gamma^-v,\gamma^-w)_{G_-}.
}\end{equation}
\end{lemma}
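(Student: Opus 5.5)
The trace bound is immediate from the definition \eqref{eq:corrected-trace-space}: for $v\in W$ the functions $\gamma^\pm v$ lie in $G_\pm$ by assumption, and $\|v\|_W^2$ contains $\|\gamma^\pm v\|_{G_\pm}^2$ as a summand, so $\|\gamma^\pm v\|_{G_\pm}\le\|v\|_W$. Linearity holds because endpoint values of the $H^1$ representatives along almost every characteristic depend linearly on $v$. The substance of the lemma is therefore the Green identity \eqref{eq:corrected-green}, and I would prove it by integrating along characteristics instead of through a density argument. This avoids having to show that smooth functions are dense in $W$ with its trace norm.

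First, the $\sigma$ terms cancel, since $(\sigma v,w)_H-(v,\sigma w)_H=0$ ($\sigma$ is real, bounded and a multiplication operator). What remains is the identity $(\omega\cdot\nabla_xv,w)_H+(v,\omega\cdot\nabla_xw)_H=(\gamma^+v,\gamma^+w)_{G_+}-(\gamma^-v,\gamma^-w)_{G_-}$, and the left side is $\int_Q \omega\cdot\nabla_x(vw)$. Fix $\omega\in\mathbb S^{d-1}$. Since $\Omega$ is convex, each line in direction $\omega$ meets $\Omega$ in a segment. Parametrise $\Omega$ by $x=y+t\omega$, where $y$ ranges over the projection $\Omega_\omega\subset\omega^\perp$ and $t\in(t^-(y),t^+(y))$; this change of variables has unit Jacobian. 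For a.e. $(y,\omega)$ the restrictions $t\mapsto v(y+t\omega,\omega)$ and $t\mapsto w(y+t\omega,\omega)$ lie in $H^1(t^-,t^+)$, with derivative $\omega\cdot\nabla_x v$ (this is the fact quoted before \eqref{eq:corrected-trace-space}). Their product is absolutely continuous, so the one-dimensional product rule and the fundamental theorem of calculus give
\[
\int_{t^-}^{t^+}\partial_t(vw)\,dt=v w\big|_{t^+}-vw\big|_{t^-}.
\]
Integrating over $y\in\Omega_\omega$ and then over $\omega$ yields the left side. By Fubini this is valid as soon as $\omega\cdot\nabla_x(vw)\in L^1(Q)$, which holds because $v,w,\omega\cdot\nabla_x v,\omega\cdot\nabla_x w\in H$ and Cauchy--Schwarz applies.

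The key geometric step is converting $dy$ on $\Omega_\omega$ into boundary measure. The endpoint maps $y\mapsto y+t^\pm(y)\omega$ parametrise the parts of $\partial\Omega$ where $\pm\omega\cdot n>0$. For Lipschitz $\partial\Omega$ the area formula gives $dy=|\omega\cdot n(x)|\,dS_x=b\,dS_x$. The boundary points with $\omega\cdot n=0$ project onto a $dy$-null set up to a set that contributes nothing, since $b=0$ there. Hence
\[
\int_{\Omega_\omega}(vw)(y+t^\pm\omega)\,dy=\int_{\{\pm\omega\cdot n>0\}}b\,\gamma^\pm v\,\gamma^\pm w\,dS_x,
\]
and integrating in $\omega$ produces exactly $(\gamma^+v,\gamma^+w)_{G_+}-(\gamma^-v,\gamma^-w)_{G_-}$. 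The boundary integrals converge absolutely because $\gamma^\pm v,\gamma^\pm w\in G_\pm$, which is precisely where the trace requirement built into $W$ is used.

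I expect the main obstacle to be the measure-theoretic justification of this last step. One must show that the projection $\Gamma^\pm_\omega\to\Omega_\omega$ is a bi-Lipschitz-type bijection almost everywhere, with Jacobian $b$, for merely Lipschitz convex $\Omega$, and that the characteristic endpoint values agree with the boundary functions $\gamma^\pm v$ as elements of $G_\pm$ under this identification. Convexity gives injectivity, because each line meets $\partial\Omega$ in at most two points outside flat grazing pieces. Rademacher's theorem gives $n$ a.e., and I would invoke the area formula for the Lipschitz graph representation of $\partial\Omega$ to obtain the Jacobian.
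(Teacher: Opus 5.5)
Your proposal is correct and follows the paper's own argument. The trace bounds are read off from the definition of $\|\cdot\|_W$, one-dimensional integration by parts is applied along almost every characteristic, and the projection onto $\omega^\perp$ (Jacobian $b=|\omega\cdot n|$) converts the endpoint terms into $G_\pm$ inner products before integrating over $\omega$; your concern about matching endpoint values with $\gamma^\pm v$ is moot, since the paper defines $\gamma^\pm v$ to be exactly those characteristic endpoint values, so only the measure identity $dy=b\,dS_x$ needs justification.
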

\begin{proof}
\new{The trace bounds follow from the definition of $\|\cdot\|_W$.
Along almost every characteristic, a function and its directional
derivative belong to $L^2$, so its restriction belongs to $H^1$ on
that interval. One-dimensional integration by parts therefore applies.
For each fixed $\omega$, orthogonal projection of the inflow and
outflow boundaries onto $\omega^\perp$ has Jacobian
$b=|\omega\cdot n|$. Integrating the one-dimensional identity over
these projected characteristics and over $\omega$ gives
\eqref{eq:corrected-green}. The volume and boundary products are
integrable by Cauchy--Schwarz and the definition of $W$.
}\end{proof}

\new{For completeness, we record the transport estimates used below.
In the present convex geometry they follow directly from the
characteristic formula and require only nonnegative attenuation.
}

\begin{theorem}[\new{Well-posedness of the transport equation}]
\label{thm:transport_wellposed}
\new{Let $L=\operatorname{diam}(\Omega)$. For $f\in H$ and $h\in G_-$,
the problem $Tv=f$, $\gamma^-v=h$, has a unique solution $v\in W$,
and
}\begin{equation}\new{\label{eq:corrected-operator-bounds}
 \|v\|_H\le L\|f\|_H+\sqrt L\|h\|_{G_-},\qquad
 \|\gamma^+v\|_{G_+}\le\sqrt L\|f\|_H+\|h\|_{G_-}.
}\end{equation}
\new{For $f\in H$ and $k\in G_+$, the adjoint problem
$T^*w=f$, $\gamma^+w=k$, has a unique solution $w\in W$, with
}\begin{equation}\new{\label{eq:adjoint-transport-bounds}
 \|w\|_H\le L\|f\|_H+\sqrt L\|k\|_{G_+},\qquad
 \|\gamma^-w\|_{G_-}\le\sqrt L\|f\|_H+\|k\|_{G_+}.
}\end{equation}
\end{theorem}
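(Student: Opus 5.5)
We will prove the forward statement by the explicit characteristic formula and obtain the adjoint statement by reversing directions. Fix $\omega$. Convexity of $\Omega$ means each line $x+t\omega$ meets $\Omega$ in a single interval. Parametrise it by an inflow point $y\in\partial\Omega$ with $(y,\omega)\in\Gamma^-$ and $t\in(0,\tau(y,\omega))$, where $\tau\le L$ is the chord length. Define
\[
 v(y+t\omega,\omega)=e^{-\int_0^t\sigma(y+s\omega)\,ds}\,h(y,\omega)
 +\int_0^t e^{-\int_s^t\sigma(y+r\omega)\,dr}f(y+s\omega,\omega)\,ds .
\]
On almost every characteristic this is the unique absolutely continuous solution of the ODE $\partial_tv+\sigma v=f$ with initial value $h$. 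Hence it solves $Tv=f$ with $\gamma^-v=h$.

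\textbf{Uniqueness.} If $Tv=0$ and $\gamma^-v=0$ with $v\in W$, then the restriction to almost every characteristic is an $H^1$ solution of a linear ODE with zero initial value. Such a solution vanishes identically.

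\textbf{Estimates.} Since $\sigma\ge0$, every exponential factor is at most $1$. Pointwise along the characteristic,
\[
 |v(y+t\omega,\omega)|\le|h(y,\omega)|+\int_0^\tau|f(y+s\omega,\omega)|\,ds,
\qquad
 |v(y+t\omega,\omega)|^2\le 2|h|^2+2\tau\int_0^\tau|f|^2\,ds .
\]
The factor $2$ must be avoided to obtain the stated constants. It is cleaner to treat the two pieces by linearity and the triangle inequality, $v=v_h+v_f$.

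For $v_h$: $|v_h|\le|h|$, so integrating over $t\in(0,\tau)$ gives $\int_0^\tau|v_h|^2\,dt\le L|h|^2$.

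For $v_f$: Cauchy--Schwarz gives $|v_f|^2\le\tau\int_0^\tau|f|^2\,ds$, and integrating in $t$ gives $\int_0^\tau|v_f|^2\,dt\le L^2\int_0^\tau|f|^2\,ds$.

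\textbf{Change of variables.} The map $(y,t)\mapsto y+t\omega$ has Jacobian $b(y,\omega)=|\omega\cdot n(y)|$ with respect to $dS_y\,dt$. This is the same projection fact used in the proof of Lemma~\ref{lem:trace-green}, and it gives
\[
 \int_\Omega \phi\,dx=\int_{\Gamma^-_\omega}\int_0^{\tau}\phi(y+t\omega)\,dt\;b\,dS_y .
\]
Multiplying the two characteristic bounds by $b$, integrating over $y$ and then over $\omega$ yields
\[
 \|v_h\|_H\le\sqrt L\|h\|_{G_-},\qquad \|v_f\|_H\le L\|f\|_H .
\]

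\textbf{Outflow trace.} At the outflow endpoint, $|\gamma^+v_h|\le|h|$ and $|\gamma^+v_f|^2\le L\int_0^\tau|f|^2\,ds$. Outflow points are matched to inflow points along the same chord. The projected measures $b\,dS$ on $\Gamma^+_\omega$ and $\Gamma^-_\omega$ both equal Lebesgue measure on the projection to $\omega^\perp$. Therefore
\[
 \|\gamma^+v_h\|_{G_+}\le\|h\|_{G_-},\qquad \|\gamma^+v_f\|_{G_+}\le\sqrt L\|f\|_H .
\]
These establish \eqref{eq:corrected-operator-bounds}.

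\textbf{Membership in $W$.} We also need $\omega\cdot\nabla_xv=f-\sigma v\in H$, which holds because $\sigma\in L^\infty$. Together with the trace bounds just proved, this gives $v\in W$.

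\textbf{Adjoint problem.} Substitute $\omega\mapsto-\omega$. This maps $T^*$ to $T$ and exchanges $\Gamma^+$ with $\Gamma^-$ and $G_+$ with $G_-$, while preserving the measures. Applying the forward result gives \eqref{eq:adjoint-transport-bounds}.

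\textbf{Main obstacle.} The delicate step is making the characteristic parametrisation measure-theoretically rigorous. Three points need care:
\begin{itemize}
\item the set of grazing or tangential lines must be null;
\item the Jacobian identity must hold on a Lipschitz boundary, where $n$ exists only almost everywhere;
\item the $H^1$ representatives of $v$ along characteristics must agree with the paper's definition of $\gamma^\pm$.
\end{itemize}
For these I would rely on the same Fubini-on-$\omega^\perp$ argument as in Lemma~\ref{lem:trace-green}. For a convex body, the projections of $\Gamma^-_\omega$ and $\Gamma^+_\omega$ onto $\omega^\perp$ each cover the projected shadow bijectively up to null sets.
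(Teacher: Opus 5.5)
Your proposal is correct and follows essentially the same route as the paper. Both arguments use the Duhamel formula along characteristics, the change of variables $dx=b\,dS_y\,dt$ obtained from projection onto $\omega^\perp$, attenuation factors bounded by one together with Cauchy--Schwarz for the source term, the measure-preserving inflow-to-outflow endpoint map for the trace bound, uniqueness from the homogeneous ODE, and direction reversal for the adjoint. Your third technical concern is moot here, since the paper defines $\gamma^\pm$ as exactly these characteristic endpoint values.
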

\begin{proof}
\new{Let $x_-$ be the inflow endpoint of a characteristic and let
$0<t<\ell(x_-,\omega)\le L$ denote distance along it. Define
}\begin{equation}\new{\label{eq:corrected-characteristics}
 }\begin{split}
 \new{v(x_-+t\omega,\omega)
 =}{}&\new{\exp\left(-\int_0^t\sigma(x_-+s\omega)\,ds\right)h(x_-,\omega)}\\
 &\new{+\int_0^t\exp\left(-\int_s^t\sigma(x_-+r\omega)\,dr\right)
                    f(x_-+s\omega,\omega)\,ds.
 }\end{split}
\new{}\end{equation}
\new{The characteristic change of variables is
}\[
 \new{\int_Q\varphi(x,\omega)\,dx\,d\omega
 =\int_{\Gamma^-}\int_0^{\ell(x_-,\omega)}
       \varphi(x_-+t\omega,\omega)\,dt\,
           b(x_-,\omega)\,dS_{x_-}\,d\omega.
}\]
\new{Indeed, for each direction the orthogonal projection used in
Lemma~\ref{lem:trace-green} parametrises the straight characteristics.
Convexity gives one interval on each ray. The same projection shows
that the endpoint map from inflow to outflow preserves
$b\,dS_x\,d\omega$. Since $\sigma\ge0$, the attenuation factors in
\eqref{eq:corrected-characteristics} are at most one. For the homogeneous
term, integration along intervals of length at most $L$ gives the
bounds $\sqrt L\|h\|_{G_-}$ in $H$ and $\|h\|_{G_-}$ at outflow.
Cauchy--Schwarz gives the corresponding bounds $L\|f\|_H$ and
$\sqrt L\|f\|_H$ for the source term. Thus
\eqref{eq:corrected-operator-bounds} follows.
The formula has inflow trace $h$, and
$\omega\cdot\nabla_xv=f-\sigma v\in H$. The outflow estimate then
gives $v\in W$. Uniqueness follows from the homogeneous ordinary
differential equation on almost every characteristic.
Reversing the direction gives the adjoint statement and
\eqref{eq:adjoint-transport-bounds}.
}\end{proof}

 \subsection{\new{Lagrangian formulation and optimality conditions}}
\new{We use the standard Lagrangian approach to PDE-constrained
optimisation \cite{revHinze2009}.}
\new{We introduce the Lagrangian associated with
\eqref{eq:corrected-objective} by incorporating the transport equation
through a dual variable $z\in W$:
}\begin{equation}\new{\label{eq:lagrangian}
 \mathcal L(u,g,z)=\frac{1}{2}\|\mathcal O u-X_T\|_Y^2
             +\frac{\alpha}{2}\|g\|_U^2-(Tu,z)_H.
}\end{equation}
\new{The inflow condition is retained in the admissible pairs $(u,g)$.
Thus their variations satisfy $\gamma^-\delta u=\delta g$.
Using \eqref{eq:corrected-green}, the derivative in these directions is
}\begin{align*}
 \new{D_{(u,g)}\mathcal L}[\new{\delta u,\delta g}]
 \new{=}{}&\new{(\mathcal O^*(\mathcal O u-X_T)-T^*z,\delta u)_H
       -(\gamma^+z,\gamma^+\delta u)_{G_+}}\\
    &\new{+(\alpha g+b\gamma^-z,\delta g)_U.
}\end{align*}
\new{Stationarity therefore yields the KKT system
}\begin{equation}\new{\label{eq:corrected-kkt}
 \begin{gathered}
 T^*z^*=\mathcal O^*(\mathcal O u^*-X_T),\qquad\gamma^+z^*=0,\\
 \alpha g^*+b\gamma^-z^*=0,\\
 Tu^*=0,\qquad\gamma^-u^*=g^*.
 \end{gathered}
}\end{equation}
\new{The boundary condition is equivalently
$\alpha g^*=(\omega\cdot n)\gamma^-z^*$ on $\Gamma^-$.
The factor $b$ arises because the control penalty is unweighted,
whereas the boundary term in Green's identity is flux-weighted.
For angular-flux tracking the adjoint source is $u^*-X_T$. For dose
tracking it is $D(u^*)-X_T$, regarded as a function constant in $\omega$.
Indeed, $D^*p(x,\omega)=p(x)$ and
$\|D\|\le |\mathbb S^{d-1}|^{1/2}$.
}

\begin{theorem}[\new{Well-posedness of the optimality system}]
\label{thm:corrected-wellposed}
\new{For every $X_T\in Y$ and $\alpha>0$, problem
\eqref{eq:corrected-objective} has a unique minimiser. Its state,
adjoint and control form the unique solution
$(u^*,z^*,g^*)\in W\times W\times U$ of
\eqref{eq:corrected-kkt}. Moreover,
}\begin{equation}\new{\label{eq:corrected-apriori}
 \|g^*\|_U\le\alpha^{-1/2}\|X_T\|_Y,\qquad
 \|u^*\|_H\le\sqrt{L/\alpha}\|X_T\|_Y,\qquad
 \|z^*\|_H\le L\|\mathcal O\|\|X_T\|_Y.
}\end{equation}
\end{theorem}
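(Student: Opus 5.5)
The plan is to reduce \eqref{eq:corrected-objective} to a strictly convex problem in the control alone and then identify its first-order condition with \eqref{eq:corrected-kkt}.

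\emph{Step 1: control-to-state map.} Define $S:U\to W$ by $Sg=v$, where $v$ is the solution from Theorem~\ref{thm:transport_wellposed} with $f=0$ and $h=g$. This is admissible because $U\hookrightarrow G_-$ with $\|g\|_{G_-}\le\|g\|_U$. By \eqref{eq:corrected-operator-bounds}, $S$ is linear and $\|Sg\|_H\le\sqrt L\|g\|_U$.

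\emph{Step 2: existence and uniqueness of the minimiser.} The problem is equivalent to minimising
\[
j(g)=\tfrac12\|\mathcal O Sg-X_T\|_Y^2+\tfrac{\alpha}{2}\|g\|_U^2
\]
over the Hilbert space $U$. Since $\mathcal O S:U\to Y$ is bounded and $\alpha>0$, $j$ is continuous, coercive and strictly convex. Hence it has a unique minimiser $g^*$, and we set $u^*=Sg^*$. The state is determined by the control through uniqueness in Theorem~\ref{thm:transport_wellposed}, so the pair $(u^*,g^*)$ is the unique minimiser of the original problem.

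\emph{Step 3: the gradient of $j$.} The derivative is
\[
j'(g)=S^*\mathcal O^*(\mathcal O Sg-X_T)+\alpha g ,
\]
with $S^*$ the $U$-adjoint of $S$, and the task is to represent $S^*$ through the adjoint transport problem. I expect this to be the main obstacle, because the measures on $U$ and $G_-$ differ.

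For $r\in H$, let $z\in W$ solve $T^*z=r$ with $\gamma^+z=0$; it exists by the adjoint part of Theorem~\ref{thm:transport_wellposed}. Apply the Green identity \eqref{eq:corrected-green} with $v=Sh$. Since $Tv=0$ and $\gamma^+z=0$, this gives
\[
-(v,r)_H=-(h,\gamma^-z)_{G_-}=-(h,b\gamma^-z)_U .
\]
Hence $S^*r=b\gamma^-z$. This is well defined in $U$ because $b\le1$ implies
\[
\|b\gamma^-z\|_U^2=\int_{\Gamma^-}b^2|\gamma^-z|^2\le\|\gamma^-z\|_{G_-}^2<\infty .
\]
Taking $r=\mathcal O^*(\mathcal O u-X_T)$, the equation $j'(g)=0$ is exactly the system \eqref{eq:corrected-kkt}.

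\emph{Step 4: equivalence with the KKT system.} Conversely, any solution $(u,z,g)\in W\times W\times U$ of \eqref{eq:corrected-kkt} satisfies $u=Sg$ and $j'(g)=0$, so $g=g^*$ by strict convexity. Uniqueness of $u$ and $z$ then follows from uniqueness in Theorem~\ref{thm:transport_wellposed}.

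\emph{Step 5: a priori bounds.} Comparing with the zero control gives $j(g^*)\le j(0)=\tfrac12\|X_T\|_Y^2$. This yields two consequences:
\[
\tfrac{\alpha}{2}\|g^*\|_U^2\le\tfrac12\|X_T\|_Y^2,\qquad \|\mathcal O u^*-X_T\|_Y\le\|X_T\|_Y .
\]
The first gives $\|g^*\|_U\le\alpha^{-1/2}\|X_T\|_Y$. Combined with Step~1, it gives $\|u^*\|_H\le\sqrt{L/\alpha}\,\|X_T\|_Y$. Finally, \eqref{eq:adjoint-transport-bounds} with $k=0$ gives
\[
\|z^*\|_H\le L\|\mathcal O^*(\mathcal O u^*-X_T)\|_H\le L\|\mathcal O\|\|X_T\|_Y .
\]
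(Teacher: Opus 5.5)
Your proposal is correct and follows essentially the same route as the paper. You reduce to $j$ on $U$, obtain a unique minimiser, and identify stationarity with the KKT system via Green's identity together with $b\gamma^-z\in U$. You then derive the bounds from $j(g^*)\le j(0)$ and the transport estimates. The only cosmetic differences are that the paper gets the stationary point from Lax--Milgram applied to the variational equation, whereas you use the direct method and write the $U$-adjoint $S^*r=b\gamma^-z$ explicitly.
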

\begin{proof}
\new{Let $Sh$ denote the solution of $T(Sh)=0$ with $\gamma^-(Sh)=h$.
Theorem~\ref{thm:transport_wellposed} gives
$\|Sh\|_H\le\sqrt L\|h\|_{G_-}$.
Since $b\le1$, this estimate also holds with $\|h\|_U$ on the right.
The reduced problem is to minimise
}\[
 \new{j(g)=\frac{1}{2}\|\mathcal O Sg-X_T\|_Y^2+\frac{\alpha}{2}\|g\|_U^2.
}\]
\new{Its stationarity equation is
}\[
 \new{\alpha(g,h)_U+(\mathcal O Sg,\mathcal O Sh)_Y
                         =(X_T,\mathcal O Sh)_Y\qquad(h\in U).
}\]
\new{The bilinear form on the left is bounded and coercive on $U$ with
coercivity constant $\alpha$. Lax--Milgram gives a unique stationary
point $g^*$, which minimises $j$ by strict convexity.
Set $u^*=Sg^*$ and let $z^*$ solve the adjoint equation with zero
outflow data. Green's identity gives
}\[
 \new{(\mathcal O u^*-X_T,\mathcal O Sh)_Y
   =(h,\gamma^-z^*)_{G_-}=(h,b\gamma^-z^*)_U.
}\]
\new{Here $b\gamma^-z^*\in U$, since
$\|bp\|_U\le\|p\|_{G_-}$. The stationarity equation is therefore
the boundary optimality condition in \eqref{eq:corrected-kkt}.
Conversely, that system implies stationarity of $j$.
Finally, $j(g^*)\le j(0)$ implies
$\sqrt\alpha\|g^*\|_U\le\|X_T\|_Y$ and
$\|\mathcal O u^*-X_T\|_Y\le\|X_T\|_Y$.
The transport estimates give \eqref{eq:corrected-apriori}.
}\end{proof}

\section{\new{Residual Stability}}\label{sec:residual-stability}
\new{Following the least-squares principle \cite{revBochev2009}, we
measure all equations of the optimality system through their residuals.}
\new{The optimality system may also be characterised by the residual
functional
}\begin{equation}\new{\label{eq:corrected-residual}
 }\begin{split}
 \new{\mathcal J_b(u,z,g)=\frac{1}{2}\big(}&\new{\|Tu\|_H^2
   +\|T^*z-\mathcal O^*(\mathcal O u-X_T)\|_H^2
   +\|\alpha g+b\gamma^-z\|_{G_-}^2}\\
   &\new{+\|\gamma^-u-g\|_{G_-}^2+\|\gamma^+z\|_{G_+}^2\big).
 }\end{split}
\end{equation}
Although the original control problem is posed with $g\in U$, the
weighted residual $\mathcal J_b$ is well defined on the larger space
$W\times W\times G_-$. Since $U\hookrightarrow G_-$, this enlarges the
residual domain without changing the zero-residual solution: as shown
below, vanishing of the boundary optimality residual forces
$g=-\alpha^{-1}b\gamma^-z\in U$.

\new{Each term measures the violation of one interior or boundary equation.
This formulation is convenient for neural network approximation,
because the state, adjoint and control can be approximated together
without imposing the constraints strongly. The boundary residuals use
the transport measure, and the control error below is measured in the
same norm.
}

\begin{lemma}[\new{Characterisation of residual minimisers}]
\label{lem:residual-minimisers}
\new{The solution $(u^*,z^*,g^*)$ of \eqref{eq:corrected-kkt} is the
unique global minimiser of $\mathcal J_b$ over $W\times W\times G_-$,
and its minimum value is zero. In particular,
}\[
\new{\mathcal J_b(u,z,g)=0
\quad\Longleftrightarrow\quad
(u,z,g)=(u^*,z^*,g^*).
}\]
\end{lemma}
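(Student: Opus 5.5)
Since $\mathcal J_b$ is a sum of squared norms, it is nonnegative on $W\times W\times G_-$. The plan is to show that its zero set is exactly the KKT solution; global minimality with value zero then follows at once.

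\emph{Step 1 (the KKT triple is a zero).} By Theorem~\ref{thm:corrected-wellposed}, $(u^*,z^*,g^*)\in W\times W\times U\subset W\times W\times G_-$, using $U\hookrightarrow G_-$. Each of the five residuals in \eqref{eq:corrected-residual} vanishes by the corresponding equation of \eqref{eq:corrected-kkt}. The boundary optimality residual $\alpha g^*+b\gamma^-z^*$ vanishes pointwise a.e.\ on $\Gamma^-$, so its $G_-$ norm is zero. Hence $\mathcal J_b(u^*,z^*,g^*)=0$, and this triple is a global minimiser.

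\emph{Step 2 (every zero is the KKT triple).} Suppose $\mathcal J_b(u,z,g)=0$. Then
\[
Tu=0,\quad T^*z=\mathcal O^*(\mathcal O u-X_T)\ \text{in }H,\qquad
\gamma^+z=0\ \text{in }G_+,
\]
and $\gamma^-u=g$, $\alpha g+b\gamma^-z=0$ in $G_-$. The main subtlety is that vanishing in $G_-$ means vanishing $b\,dS_x\,d\omega$-a.e. on $\Gamma^-$. Since $b>0$ on $\Gamma^-$ by definition of the inflow boundary, the weighted and unweighted surface measures have the same null sets there. So these identities hold $dS_x\,d\omega$-a.e.

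Consequently $g=-\alpha^{-1}b\gamma^-z$ a.e. Moreover $\|bp\|_U\le\|p\|_{G_-}$, because $b^2\le b$ as $b\le1$. With $\gamma^-z\in G_-$ this gives $g\in U$. Thus $(u,z,g)\in W\times W\times U$ satisfies \eqref{eq:corrected-kkt} exactly.

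\emph{Step 3 (uniqueness).} Uniqueness of solutions of \eqref{eq:corrected-kkt} in $W\times W\times U$ is part of Theorem~\ref{thm:corrected-wellposed}. To argue it directly: $Tu=0$ with $\gamma^-u=g$ gives $u=Sg$ by Theorem~\ref{thm:transport_wellposed}. The adjoint equation with $\gamma^+z=0$ determines $z$ uniquely from $u$. Green's identity \eqref{eq:corrected-green}, as in the proof of Theorem~\ref{thm:corrected-wellposed}, then turns the boundary condition into the stationarity equation of the reduced functional $j$. Lax--Milgram coercivity with constant $\alpha$ gives uniqueness of $g$, hence $g=g^*$, and then $u=u^*$ and $z=z^*$.

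Therefore the zero set of $\mathcal J_b$ is exactly $\{(u^*,z^*,g^*)\}$, which is both the equivalence and the uniqueness of the global minimiser. The only step needing care is the measure argument in Step 2. It is what justifies enlarging the residual domain from $U$ to $G_-$.
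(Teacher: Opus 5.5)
Your proposal is correct and follows essentially the same route as the paper. You show that the KKT triple annihilates all five residuals, that a zero of $\mathcal J_b$ forces $g=-\alpha^{-1}b\gamma^-z\in U$ via $\|bp\|_U\le\|p\|_{G_-}$, and that uniqueness is then inherited from Theorem~\ref{thm:corrected-wellposed}, with nonnegativity giving global minimality. Your remark that $b>0$ on $\Gamma^-$ makes $b\,dS_x\,d\omega$ and $dS_x\,d\omega$ share null sets is a correct detail that the paper leaves implicit.
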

\begin{proof}
\new{The KKT solution makes all five residuals vanish. Conversely,
if $\mathcal J_b(u,z,g)=0$, each squared residual norm is zero.
The boundary optimality equation gives
$g=-\alpha^{-1}b\gamma^-z\in U$, since
$\|b\gamma^-z\|_U\le\|\gamma^-z\|_{G_-}$.
Thus $(u,z,g)\in W\times W\times U$ satisfies
\eqref{eq:corrected-kkt}, and uniqueness follows from
Theorem~\ref{thm:corrected-wellposed}. Nonnegativity of
$\mathcal J_b$ completes the proof.}
\end{proof}

\begin{theorem}[\new{Global residual stability}]
\label{thm:corrected-coercivity}
\new{Under the hypotheses of Theorem~\ref{thm:corrected-wellposed}, there
is a constant $C=C(\alpha,L,\|\mathcal O\|)>0$ such that every
$(u,z,g)\in W\times W\times G_-$ satisfies
}\begin{equation}\new{\label{eq:corrected-coercivity}
 \|u-u^*\|_H^2+\|z-z^*\|_H^2+\|g-g^*\|_{G_-}^2
                      \le C\mathcal J_b(u,z,g).
}\end{equation}
\new{An explicit choice is $C=C_{\rm stab}$ in
\eqref{eq:explicit-stability-constant} below.}
\end{theorem}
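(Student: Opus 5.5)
The plan is an energy argument on the error equations, combined with the transport estimates of Theorem~\ref{thm:transport_wellposed}. Set $e_u=u-u^*$, $e_z=z-z^*$, $e_g=g-g^*$. Since $(u^*,z^*,g^*)$ satisfies \eqref{eq:corrected-kkt}, the five residuals in \eqref{eq:corrected-residual} are exactly the residuals of the errors:
\[
r_1=Te_u,\quad r_2=T^*e_z-\mathcal O^*\mathcal O e_u,\quad r_3=\alpha e_g+b\gamma^-e_z,\quad r_4=\gamma^-e_u-e_g,\quad r_5=\gamma^+e_z,
\]
with $\|r_1\|_H^2+\dots+\|r_5\|_{G_+}^2=2\mathcal J_b=:R^2$.

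\emph{Step 1: energy identity.} Apply Lemma~\ref{lem:trace-green} with $v=e_u$ and $w=e_z$, and substitute the residual relations. This gives
\[
(r_1,e_z)_H-(e_u,r_2)_H-\|\mathcal O e_u\|_Y^2=(\gamma^+e_u,r_5)_{G_+}-(e_g+r_4,\gamma^-e_z)_{G_-}.
\]
The grazing difficulty sits in the term $(e_g,\gamma^-e_z)_{G_-}=\int b\,e_g\gamma^-e_z$. Inserting $e_g$ naively would require $e_g$ in the unweighted norm $U$, which we do not control. Instead I will eliminate $e_g$ using $e_g=\alpha^{-1}(r_3-b\gamma^-e_z)$. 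Then
\[
(e_g,\gamma^-e_z)_{G_-}=\alpha^{-1}(r_3,\gamma^-e_z)_{G_-}-\alpha^{-1}\beta,\qquad \beta:=\int_{\Gamma^-}b^3|\gamma^-e_z|^2=\|b\gamma^-e_z\|_{G_-}^2 .
\]
Every term now involves only flux-weighted quantities, and $\beta$ enters with a favourable sign. The result is
\[
\|\mathcal O e_u\|_Y^2+\alpha^{-1}\beta=(r_1,e_z)_H-(e_u,r_2)_H-(\gamma^+e_u,r_5)_{G_+}+(r_4,\gamma^-e_z)_{G_-}+\alpha^{-1}(r_3,\gamma^-e_z)_{G_-}.
\]

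\emph{Step 2: bound the errors appearing on the right.} The adjoint error solves $T^*e_z=r_2+\mathcal O^*\mathcal O e_u$ with $\gamma^+e_z=r_5$. Hence \eqref{eq:adjoint-transport-bounds} bounds $\|e_z\|_H$ and $\|\gamma^-e_z\|_{G_-}$ by $C(R+\|\mathcal O\|\,\|\mathcal O e_u\|_Y)$.

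For the control, $\|e_g\|_{G_-}\le\alpha^{-1}(\|r_3\|_{G_-}+\|b\gamma^-e_z\|_{G_-})=\alpha^{-1}(\|r_3\|_{G_-}+\sqrt\beta)$. This step uses only the weighted norm, so no lower bound on $b$ is needed.

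The state error solves $Te_u=r_1$ with $\gamma^-e_u=e_g+r_4$. Hence \eqref{eq:corrected-operator-bounds} bounds $\|e_u\|_H$ and $\|\gamma^+e_u\|_{G_+}$ by $C(R+\sqrt\beta)$.

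\emph{Step 3: close by a quadratic inequality.} Set $a^2=\|\mathcal O e_u\|_Y^2+\alpha^{-1}\beta$. By Cauchy--Schwarz and Step 2, the right-hand side of the identity is bounded by $R\,(c_1R+c_2a)$, with $c_i$ depending only on $\alpha,L,\|\mathcal O\|$. Young's inequality then gives $a\le CR$. Feeding this back into Step 2 bounds $\|e_u\|_H$, $\|e_z\|_H$ and $\|e_g\|_{G_-}$ by $CR$, which is \eqref{eq:corrected-coercivity}. Tracking the constants through these steps yields the explicit $C_{\rm stab}$.

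The main obstacle is Step 1: the handling of the mismatched boundary measures. The key choice is to eliminate $e_g$ rather than $\gamma^-e_z$, so that the coercive quantity is $\|b\gamma^-e_z\|_{G_-}^2$. This quantity controls $e_g$ in $G_-$ but not in $U$, consistent with the counterexample announced in the introduction.
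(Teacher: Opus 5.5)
Your argument is correct and follows a genuinely different route from the paper. The paper lifts the errors as $e_u=Se_g+y$ and $e_z=w+\eta$, and identifies $\gamma^-w=Ae_g$ with $A=S^*\mathcal O^*\mathcal OS$. It then reduces everything to the boundary equation $(\alpha I+PA)e_g=r_3-P\gamma^-\eta$ on $G_-$. Because $PA$ is not self-adjoint there, it inverts $\alpha I+PA$ by the resolvent identity with the symmetrised operator $P^{1/2}AP^{1/2}$.

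You instead apply Green's identity once to the error pair and eliminate $e_g$ through the optimality residual. When only $r_3$ is nonzero, your identity is exactly the reduced equation tested against the adjoint trace $Ae_g$ rather than against $e_g$. This makes both $(e_g,Ae_g)_{G_-}=\|\mathcal OSe_g\|_Y^2$ and $(PAe_g,Ae_g)_{G_-}$ nonnegative.

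What each route buys:
\begin{itemize}
\item Your route is more elementary. If you track $\alpha$ through your Young step, it gives $\|e_g\|_{G_-}\le C\alpha^{-3/2}R$ for small $\alpha$. That is a squared constant of order $\alpha^{-3}$ for general $\mathcal O$, compared with $\kappa^2\sim\alpha^{-4}$ in the paper's general bound.
\item The paper's route yields the linear componentwise bound $(\|e_u\|_H,\|e_z\|_H,\|e_g\|_{G_-})^\top\le\mathsf MR$. This is what defines the particular $C_{\rm stab}=2\|\mathsf M\|_{\rm F}^2$ named in the statement.
\item For $\mathcal O=I$, the paper uses that $A$ is a multiplication operator commuting with $P$. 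This gives $\kappa=\alpha^{-1}$ and the sharp order $\alpha^{-2}$, which your closure does not capture.
\end{itemize}
Your final sentence therefore overstates what you prove. You obtain an explicit admissible constant of your own, not the one in \eqref{eq:explicit-stability-constant}.

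There is one harmless slip in Step~1. The $G_-$ inner product already contains a factor $b$, so the substitution gives $\beta=(b\gamma^-e_z,\gamma^-e_z)_{G_-}=\int_{\Gamma^-}b^2|\gamma^-e_z|^2\,dS_x\,d\omega=\|b\gamma^-e_z\|_U^2$, not $\|b\gamma^-e_z\|_{G_-}^2$. Since $b\le1$, you still have $\|b\gamma^-e_z\|_{G_-}\le\sqrt\beta$. So Step~2 holds with an inequality in place of the equality, and the rest is unchanged.

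The corrected $\beta$ also sharpens your closing remark. It controls $b\gamma^-e_z$ in $U$, so the only obstruction to a $U$-estimate for $e_g$ is that $r_3$ is measured in $G_-$. With $\|r_3\|_U$ in its place, the same identity gives the $U$-estimate \eqref{eq:U-stability} of Corollary~\ref{cor:unweighted-control} directly.
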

\begin{proof}
\new{Write $e_u=u-u^*$, $e_z=z-z^*$ and $e_g=g-g^*$, and denote the
five residuals in \eqref{eq:corrected-residual}, in order, by
$r_1,\ldots,r_5$. Then
}\[
 \new{Te_u=r_1,\qquad T^*e_z=\mathcal O^*\mathcal O e_u+r_2,
 \qquad\alpha e_g+b\gamma^-e_z=r_3,
 \qquad\gamma^-e_u=e_g+r_4,\quad\gamma^+e_z=r_5.
}\]
\new{Let $S$ denote the state lift used above, and let $y$ solve
$Ty=r_1$, $\gamma^-y=r_4$. Then $e_u=Se_g+y$.
Let $\eta$ solve
}\[
 \new{T^*\eta=\mathcal O^*\mathcal O y+r_2,
                         \qquad\gamma^+\eta=r_5.
}\]
The difference $w=e_z-\eta$ satisfies
\[
 T^*w=\mathcal O^*\mathcal OSe_g,\qquad \gamma^+w=0.
\]
The state lift $S:G_-\to H$ is bounded, with
$\|S\|\le\sqrt L$ by Theorem~\ref{thm:transport_wellposed}.
Hence
\[
 A:=S^*\mathcal O^*\mathcal O S:G_-\to G_-
\]
is bounded, self-adjoint and nonnegative, and
\[
 \|A\|\le L\|\mathcal O\|^2.
\]
For every $h\in G_-$, Green's identity gives
\[
 (h,\gamma^-w)_{G_-}
 =(Sh,\mathcal O^*\mathcal OSe_g)_H
 =(h,Ae_g)_{G_-},
\]
so that $\gamma^-w=Ae_g$.

Let $P:G_-\to G_-$ denote multiplication by $b$. Since
$0\le b\le1$, $P$ is a bounded, self-adjoint, nonnegative contraction,
and $P^{1/2}$ is multiplication by $\sqrt b$. The boundary equation
therefore becomes
\begin{equation}\label{eq:corrected-reduced-error}
 (\alpha I+PA)e_g=r_3-P\gamma^-\eta.
\end{equation}

Set
\[
 B=P^{1/2}AP^{1/2}.
\]
Since $A$ is self-adjoint and nonnegative, so is $B$. Thus
\[
 ((\alpha I+B)v,v)_{G_-}\ge
 \alpha\|v\|_{G_-}^2,
\]
and consequently $\alpha I+B$ is invertible with
\[
 \|(\alpha I+B)^{-1}\|\le\alpha^{-1}.
\]
Using the resolvent identity with
$X=P^{1/2}$ and $Y=P^{1/2}A$,
\[
 (\alpha I+XY)^{-1}
 =\alpha^{-1}\bigl[I-X(\alpha I+YX)^{-1}Y\bigr],
\]
we obtain
\begin{equation}\label{eq:corrected-inverse}
 (\alpha I+PA)^{-1}
 =\alpha^{-1}\left[
 I-P^{1/2}(\alpha I+P^{1/2}AP^{1/2})^{-1}P^{1/2}A
 \right].
\end{equation}
In particular, using $\|P^{1/2}\|\le1$,
\[
 \|(\alpha I+PA)^{-1}\|
 \le \alpha^{-1}\left(1+\frac{\|A\|}{\alpha}\right)
 \le \kappa:=
 \alpha^{-1}\left(1+\frac{L\|\mathcal O\|^2}{\alpha}\right).
 \]
 \new{Writing $R_i=\|r_i\|$ in its respective residual space, the
transport estimates imply
}\begin{align*}
 \new{\|y\|_H}&\new{\le LR_1+\sqrt L R_4,}\\
 \new{\|\gamma^-\eta\|_{G_-}
   }&\new{\le\sqrt L(\|\mathcal O\|^2\|y\|_H+R_2)+R_5,}\\
 \new{\|e_g\|_{G_-}}&\new{\le\kappa(R_3+\|\gamma^-\eta\|_{G_-}),}\\
 \new{\|e_u\|_H}&\new{\le\sqrt L\|e_g\|_{G_-}+\|y\|_H,}\\
 \new{\|e_z\|_H}&\new{\le L(\|\mathcal O\|^2\|e_u\|_H+R_2)+\sqrt L R_5.
}\end{align*}
\new{Combining these estimates and using
$\sum_iR_i^2=2\mathcal J_b(u,z,g)$ proves
\eqref{eq:corrected-coercivity}.
}\new{An explicit choice is $C_{\rm stab}=2\|\mathsf M\|_{\rm F}^2$,
where $\mathsf M$ collects these coefficients as specified in
Appendix~\ref{app:boundary-stability}. For angular-flux tracking,
the characteristic structure gives the sharper choice
$\kappa=\alpha^{-1}$, and the dependence on $\alpha$ is examined there.}
\end{proof}

\begin{remark}[\new{Boundary norms and loss weights}]

\new{\label{rem:boundary-norms}}
\new{If the three boundary residuals in \eqref{eq:corrected-residual}
are measured in unweighted $L^2$, denote the resulting functional
by $\mathcal J_0$. Then $\mathcal J_b\le\mathcal J_0$.
In addition, the control equation and the adjoint trace estimate give
}\[
 \new{\|e_g\|_U\le\alpha^{-1}\big[
  \|r_3\|_U+\sqrt L(\|\mathcal O\|^2\|e_u\|_H+\|r_2\|_H)
                         +\|r_5\|_{G_+}\big].
}\]
\new{Consequently, $\mathcal J_0$ also controls the unweighted control
error whenever these residuals are finite. Multiplying the five
squared residual norms by fixed positive weights $\lambda_i$
preserves the stability estimate, with its constant multiplied by
$1/\min_i\lambda_i$. This includes fixed normalisations by the
volume and boundary measures.
}
\end{remark}
\begin{remark}[Weighted residual convergence and the control norm]\label{app:grazing}
\label{rem:grazing-control}
{\color{black}
Theorem~\ref{thm:corrected-coercivity} does not bound the control
error in $U$. Write $d\mu=dS_x\,d\omega$ and $A_\varepsilon=\{(x,\omega)\in\Gamma^-:0<b<\varepsilon\}$. These sets have positive measure in the present geometries. Set
$h_\varepsilon=\mu(A_\varepsilon)^{-1/2}\mathbf1_{A_\varepsilon}$.
Then $\|h_\varepsilon\|_U=1$ and
$\|h_\varepsilon\|_{G_-}^2\le\varepsilon$.
For $(u,z,g)=(u^*,z^*,g^*+h_\varepsilon)$, the residuals are
\[
 (r_1,r_2,r_3,r_4,r_5)
 =(0,0,\alpha h_\varepsilon,-h_\varepsilon,0),
 \qquad
 \mathcal J_b\le\tfrac12(1+\alpha^2)\varepsilon.
\]
Thus $\mathcal J_b\to0$ need not imply $g\to g^*$ in $U$.
The corresponding reduced-objective gap satisfies
$j(g^*+h_\varepsilon)-j(g^*)\ge\alpha/2$ by
\eqref{eq:objective-gap}. A small weighted residual therefore does not guarantee that
$j(g)$ is close to its minimum.
This does not contradict Lemma~\ref{lem:residual-minimisers}:
the zero-residual solution is the original minimiser, but convergence
of approximate minimisers is certified in the weighted norm. The following corollary obtains this stronger conclusion by measuring the optimality residual in \(U\).}
\end{remark}
\new{\begin{corollary}[Control accuracy in the objective norm]
\label{cor:unweighted-control}\label{cor:objective-gap}
Under the hypotheses of Theorem~\ref{thm:corrected-wellposed},
let $(u,z,g)\in W\times W\times U$ and define
\begin{equation}\label{eq:U-residual}
 \mathcal J_U(u,z,g)=\frac12\bigl(
 \|r_1\|_H^2+\|r_2\|_H^2+\|r_3\|_U^2
 +\|r_4\|_{G_-}^2+\|r_5\|_{G_+}^2\bigr).
\end{equation}
There is an explicit $C_U=C_U(\alpha,L,\|\mathcal O\|)$, given in
\eqref{eq:U-stability-constant}, such that
\begin{equation}\label{eq:U-stability}
 \|u-u^*\|_H^2+\|z-z^*\|_H^2+\|g-g^*\|_U^2
 \le C_U\mathcal J_U(u,z,g).
\end{equation}
Moreover, writing $e_g=g-g^*$, the reduced objective satisfies
\begin{equation}\label{eq:objective-gap}
\begin{aligned}
 j(g)-j(g^*)&=\tfrac12\|\mathcal OS e_g\|_Y^2
                    +\tfrac\alpha2\|e_g\|_U^2\\
 &\le\tfrac{\alpha+L\|\mathcal O\|^2}{2}C_U\mathcal J_U(u,z,g).
\end{aligned}
\end{equation}
The constant $C_U$ is of order $\alpha^{-2}$ for fixed $L$ and
$\|\mathcal O\|$ as $\alpha\downarrow0$.
\end{corollary}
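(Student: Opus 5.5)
The plan is to reuse the error equations from the proof of Theorem~\ref{thm:corrected-coercivity}, but to read the boundary optimality equation in $U$ rather than in $G_-$. The relations are
\[
 Te_u=r_1,\quad T^*e_z=\mathcal O^*\mathcal Oe_u+r_2,\quad \alpha e_g+b\gamma^-e_z=r_3,\quad \gamma^-e_u=e_g+r_4,\quad \gamma^+e_z=r_5 .
\]
Here $r_3\in U$ because $g\in U$ and $\|b\gamma^-z\|_U\le\|\gamma^-z\|_{G_-}$. Since $\|r_3\|_{G_-}\le\|r_3\|_U$, we have $\mathcal J_b\le\mathcal J_U$. Theorem~\ref{thm:corrected-coercivity} therefore gives
\[
 \|e_u\|_H^2+\|e_z\|_H^2\le C_{\rm stab}\mathcal J_U(u,z,g).
\]
Only the $U$-bound on $e_g$ remains.

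For the control I would solve the optimality equation for $e_g$, which gives $\alpha e_g=r_3-b\gamma^-e_z$. The inequality $\|bp\|_U\le\|p\|_{G_-}$ then yields $\alpha\|e_g\|_U\le\|r_3\|_U+\|\gamma^-e_z\|_{G_-}$. Next I would bound the adjoint inflow trace with the second estimate of \eqref{eq:adjoint-transport-bounds}, applied to the problem with source $\mathcal O^*\mathcal Oe_u+r_2$ and outflow data $r_5$. This gives
\[
 \|\gamma^-e_z\|_{G_-}\le\sqrt L\bigl(\|\mathcal O\|^2\|e_u\|_H+R_2\bigr)+R_5 ,
\]
which reproduces the bound in Remark~\ref{rem:boundary-norms}. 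To bound $\|e_u\|_H$ I would insert either the stability estimate above or the explicit chain from the proof of the theorem. The chain is the better choice because it keeps the dependence on $\alpha$ visible. Collecting coefficients as a linear map from $(R_1,\dots,R_5)$ to the three error norms, call it $\mathsf M_U$, we get $C_U=2\|\mathsf M_U\|_{\rm F}^2$ from $\sum R_i^2=2\mathcal J_U$ and Cauchy--Schwarz. This defines \eqref{eq:U-stability-constant}.

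The $\alpha$-scaling is the step I expect to need the most care. Along the chain, $\|e_g\|_{G_-}\le\kappa(\cdots)$ with $\kappa\sim\alpha^{-2}$ in general, so a naive substitution would give $\|e_g\|_U\sim\alpha^{-3}$ and $C_U\sim\alpha^{-6}$. To reach the stated order $\alpha^{-2}$, I would avoid routing through $\kappa$ and instead use an energy argument for $e_u$.

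First I would test the optimality equation with $e_g$ in $U$. Using Green's identity as in the proof of Theorem~\ref{thm:corrected-wellposed} to write $(b\gamma^-w,e_g)_U=(\gamma^-w,e_g)_{G_-}=\|\mathcal OSe_g\|_Y^2\ge0$, this gives
\[
 \alpha\|e_g\|_U^2\le\|e_g\|_U\bigl(\|r_3\|_U+\|\gamma^-\eta\|_{G_-}\bigr).
\]
Hence $\|e_g\|_U\le\alpha^{-1}(\|r_3\|_U+\|\gamma^-\eta\|_{G_-})$, where $\|\gamma^-\eta\|_{G_-}$ depends only on residuals with $\alpha$-free constants. The errors $e_u$ and $e_z$ then follow with $\alpha^{-1}$ coefficients, so $C_U=O(\alpha^{-2})$. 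If this sharpening fails, I would state the constant as it comes out of the chain.

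Finally, for \eqref{eq:objective-gap} I would expand the quadratic $j$ about $g^*$. This gives
\[
 j(g^*+e_g)-j(g^*)=j'(g^*)e_g+\tfrac12\|\mathcal OSe_g\|_Y^2+\tfrac\alpha2\|e_g\|_U^2 ,
\]
and the linear term vanishes by the stationarity equation in the proof of Theorem~\ref{thm:corrected-wellposed}. Using $\|\mathcal OS\|^2\le L\|\mathcal O\|^2$ with $\|Sh\|_H\le\sqrt L\|h\|_U$, together with \eqref{eq:U-stability}, gives the stated bound.
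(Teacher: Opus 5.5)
Your argument is correct and, once you drop the preliminary detour through $C_{\rm stab}$ and the raw equation $\alpha e_g=r_3-b\gamma^-e_z$ (which would indeed give the wrong $\alpha$-scaling), it is exactly the paper's proof. Test $\alpha e_g+b\gamma^-w=r_3-b\gamma^-\eta$ against $e_g$ in $U$, identify $(b\gamma^-w,e_g)_U=\|\mathcal OSe_g\|_Y^2\ge0$ by Green's identity, deduce $\|e_g\|_U\le\alpha^{-1}(\|r_3\|_U+\|\gamma^-\eta\|_{G_-})$, and run the same transport chain with $\|e_g\|_{G_-}\le\|e_g\|_U$ (the theorem's matrix with $\kappa=\alpha^{-1}$); the objective-gap bound then follows, as in the paper, from the quadratic expansion of $j$ about $g^*$, the vanishing linear term and $\|Se_g\|_H\le\sqrt L\|e_g\|_U$.
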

\begin{proof}
Multiplication by $b$ maps $G_-$ into $U$ with norm at most one,
so $r_3=\alpha g+b\gamma^-z\in U$. Use $e_u=Se_g+y$ and
$e_z=w+\eta$ from the proof of Theorem~\ref{thm:corrected-coercivity}.
Testing $\alpha e_g+b\gamma^-w=r_3-b\gamma^-\eta$ against $e_g$
in $U$, Green's identity gives
\[
 \alpha\|e_g\|_U^2+\|\mathcal OSe_g\|_Y^2
 =(r_3-b\gamma^-\eta,e_g)_U,
 \qquad
 \|e_g\|_U\le\alpha^{-1}
 (\|r_3\|_U+\|\gamma^-\eta\|_{G_-}).
\]
The same transport estimates for $y$, $\eta$, $e_u$ and $e_z$,
with $\|e_g\|_{G_-}\le\|e_g\|_U$, prove \eqref{eq:U-stability}.
Their coefficients give the explicit constant in the appendix.
For \eqref{eq:objective-gap}, expand $j(g)$ about $g^*$.
The cross term vanishes by stationarity, and
$\|Se_g\|_H\le\sqrt L\|e_g\|_U$ gives the upper bound.
\end{proof}

\begin{remark}[Sharper reduced-objective bound]
\label{rem:sharper-objective-gap}
The reduced-objective estimate in Corollary~\ref{cor:unweighted-control} admits a
sharper dependence on $\alpha$.
For $(u,z,g)\in W\times W\times U$, write
$R_1=\|r_1\|_H$, $R_2=\|r_2\|_H$, $R_3=\|r_3\|_U$,
$R_4=\|r_4\|_{G_-}$ and $R_5=\|r_5\|_{G_+}$, and set
\[
\mathcal R_{\mathrm{red}}
:= R_3
+\sqrt L\left[
    \|\mathcal O\|^2(LR_1+\sqrt L R_4)+R_2
\right]+R_5.
\]
Then
\begin{equation}\label{eq:sharper-objective-gap}
j(g)-j(g^*)
\le \frac{\mathcal R_{\mathrm{red}}^2}{2\alpha}
\le
\frac{(L^3+L^2)\|\mathcal O\|^4+L+2}{\alpha}
\,\mathcal J_U(u,z,g).
\end{equation}

Indeed, let $z_g$ be the adjoint associated with the feasible
state $Sg$, and write $y=u-Sg$ and $\eta=z-z_g$.
The residual equations give
\[
Ty=r_1,\quad \gamma^-y=r_4,\qquad
T^*\eta=\mathcal O^*\mathcal O y+r_2,\quad \gamma^+\eta=r_5.
\]
Consequently, the reduced gradient satisfies
\[
\nabla j(g)=\alpha g+b\gamma^-z_g
           =r_3-b\gamma^-\eta.
\]
The transport estimates of Theorem~\ref{thm:transport_wellposed} and
$\|bp\|_U\le\|p\|_{G_-}$ yield
$\|\nabla j(g)\|_U\le\mathcal R_{\mathrm{red}}$.
Since $j$ is $\alpha$-strongly convex,
\[
j(g)-j(g^*)
\le\frac{1}{2\alpha}\|\nabla j(g)\|_U^2.
\]
The stated bound follows by Cauchy--Schwarz and
$\sum_{i=1}^5R_i^2=2\mathcal J_U(u,z,g)$.
For fixed $L$ and $\|\mathcal O\|$, the objective-gap coefficient grows
at most as $\alpha^{-1}$. The constant $C_U$ in
\eqref{eq:U-stability}, which controls the combined solution error,
retains its $\alpha^{-2}$ dependence.
\end{remark}

Only the optimality residual needs the stronger measure. No additional
unweighted traces of $u$ or $z$ are required. Since $b\le1$,
\begin{equation}\label{eq:residual-inclusion}
 \mathcal J_b\le\mathcal J_U\le\mathcal J_0
\end{equation}
wherever $\mathcal J_0$ is finite. Thus the unweighted boundary
loss also controls the error in $U$. A field trained with
$\mathcal J_b$ may instead be assessed afterwards using
$\mathcal J_U$. Weighted residual convergence alone does not imply
convergence in $U$, as the grazing directions construction in Remark~\ref{rem:grazing-control} shows. The objective $j(g)$ uses the feasible state $Sg$. For fixed $\alpha>0$, its convergence to $j(g^*)$ is
equivalent to $g\to g^*$ in $U$ by \eqref{eq:objective-gap} and
$\|Se_g\|_H\le\sqrt L\|e_g\|_U$.}

\begin{remark}[\new{Characteristic reference for angular-flux tracking}]
\new{For $\mathcal O=I$, the characteristic formula also gives an explicit
reference solution. At an inflow point $(y,\omega)$, set
}\[
 \new{a_y(t)=\exp\left(-\int_0^t\sigma(y+s\omega)\,ds\right),
 \qquad 0<t<\ell(y,\omega).
}\]
\new{Then $u(y+t\omega,\omega)=a_y(t)g(y,\omega)$, and the reduced
objective is the integral, with respect to $dS_y\,d\omega$, of
}\[
 \new{\frac{b(y,\omega)}{2}\int_0^{\ell(y,\omega)}
       |a_y(t)g-X_T(y+t\omega,\omega)|^2\,dt+\frac{\alpha}{2}|g|^2.
}\]
\new{Differentiation with respect to $g$ gives
}\begin{equation}\new{\label{eq:rev-characteristic-control}
 g^*(y,\omega)=
 \frac{b(y,\omega)\displaystyle\int_0^{\ell(y,\omega)}
                    a_y(t)X_T(y+t\omega,\omega)\,dt}{\alpha+b(y,\omega)\displaystyle\int_0^{\ell(y,\omega)}a_y(t)^2\,dt}.
}\end{equation}
\new{The denominator is bounded below by $\alpha$, and the adjoint is
recovered from its transport equation with zero outflow data.
For $X_T\ge0$, the control in \eqref{eq:rev-characteristic-control}
is nonnegative. Dose tracking couples the angular directions through
$D(u)$ and does not admit this separate minimisation along each ray.
}\end{remark}

\section{Neural Network Approximation and Residual Minimisation}
\label{sec:NN}
\new{Neural network residual minimisation \cite{raissi2019physics} has been used
for radiative transfer in \cite{mishra2021physics}. Here the network
represents the spatial and angular dependence jointly. High-order
space--angle--energy finite element approximations provide another
approach to phase-space discretisation \cite{houston2024efficient}.}
To approximate the optimality system \new{\eqref{eq:corrected-kkt}, we
represent the triple $(u,z,g)$} using neural networks as smooth surrogate
functions. These approximations are inserted directly into the residual functional \new{$\mathcal J_b$ defined in \eqref{eq:corrected-residual}, and trained by numerical minimisation of its quadrature approximation. This constitutes a neural network-based residual minimisation scheme for the
constrained transport control problem.}
Smooth feedforward networks provide a nonlinear approximation class on phase space that extends naturally to high-dimensional domains, and satisfy the required regularity conditions needed for the transport operator and boundary traces.

 \subsection{Neural network parameterisation of the KKT System}

Let \new{$V_\theta=(u_\theta,z_\theta,g_\theta)$ denote a feedforward
network with three outputs, parametrised by weights $\theta\in\mathbb R^P$.
The state and adjoint outputs are restricted to $\overline\Omega\times\mathbb S^{d-1}$, and the control output to
} $\Gamma^-$\new{. Three separate scalar networks may also be used. We use
$C^\infty$ activation functions, such as $\tanh$, and write
$\mathcal A_N$ for the resulting admissible class at resolution $N$.}

By construction, the network functions are smooth in both spatial and angular variables and hence belong to the regularity class of the residual functional:
\begin{displaymath}
\new{u_\theta,z_\theta\in C^\infty(\Omega\times\mathbb S^{d-1}),
\qquad (u_\theta,z_\theta,g_\theta)\in W\times W\times G_-.}
\end{displaymath}
\new{The control $g_\theta$ is the restriction of a smooth ambient
network to $\Gamma^-$. The membership in $W\times W\times G_-$
follows because the networks and their spatial derivatives are bounded
on the compact set $\overline\Omega\times\mathbb S^{d-1}$.
The directional derivatives $\omega\cdot\nabla_xu_\theta$ and
$\omega\cdot\nabla_xz_\theta$ are therefore classically defined
and computed by automatic differentiation. In two dimensions, using
$(\cos\vartheta,\sin\vartheta)$ as the angular input imposes angular
periodicity directly.}

\new{In fact, the boundedness of the control network and the finite
unweighted measure of $\Gamma^-$ give $g_\theta\in U$. Thus
$V_\theta\in W\times W\times U$, and the stronger control estimate in
Corollary~\ref{cor:unweighted-control} also applies when the optimality
residual is measured in $U$.}

\subsection{Loss Functional and Quadrature Discretisation}
\new{Denote the five residuals in \eqref{eq:corrected-residual} by
$r_1,\ldots,r_5$. Let $\xi_j^\Omega\in Q$ and
$\xi_j^\pm\in\Gamma^\pm$ be interior and boundary quadrature points,
with positive weights $w_j^\Omega$ and $w_j^\pm$. Each $\xi$ denotes
a position--direction pair. The discrete loss is
}\begin{equation}\label{eq:J-discrete}
\new{\begin{aligned}
\mathcal J_{b,h}(V)
={}&\frac12\sum_{j=1}^{M_\Omega}w_j^\Omega
          |r_1(V,\xi_j^\Omega)|^2
   +\frac12\sum_{j=1}^{M_\Omega}w_j^\Omega
          |r_2(V,\xi_j^\Omega)|^2\\
 &+\frac12\sum_{j=1}^{M_-}w_j^-
          |r_3(V,\xi_j^-)|^2
   +\frac12\sum_{j=1}^{M_-}w_j^-
          |r_4(V,\xi_j^-)|^2
   +\frac12\sum_{j=1}^{M_+}w_j^+
          |r_5(V,\xi_j^+)|^2.
\end{aligned}}
\end{equation}
\new{The weights approximate $dx\,d\omega$ in the volume and
$b\,dS_x\,d\omega$ on the boundary. For a product rule, the interior
weights have the form $w_xw_\omega$, as in spatial and angular
Gauss quadrature. We use $Q_{\Omega,h}$ and $Q_{\pm,h}$ below as
shorthand for these weighted sums.}
\new{For $\mathcal J_{U,h}$ and $\mathcal J_{0,h}$, the boundary
quadrature weights are changed according to the corresponding residual measures.}

\subsubsection{\new{Monte Carlo loss}}
\new{Alternatively, the integrals can be approximated by sample means.
Write
}\[
\new{m_\Omega=|\Omega|\,|\mathbb S^{d-1}|,\qquad
m_\pm=\int_{\Gamma^\pm}b\,dS_x\,d\omega.}
\]
\new{Draw $M_\Omega$ independent uniform samples in $Q$ and,
on each $\Gamma^\pm$, draw $M_\pm$ independent samples with
probability measure $m_\pm^{-1}b\,dS_x\,d\omega$.
Setting $w_j^\Omega=m_\Omega/M_\Omega$ and
$w_j^\pm=m_\pm/M_\pm$ in \eqref{eq:J-discrete} gives
}\begin{equation}\label{eq:J-Monte-Carlo}
\new{\begin{aligned}
\mathcal J_{b,M}(V)
={}&\frac{m_\Omega}{2M_\Omega}\sum_{j=1}^{M_\Omega}
       \bigl(|r_1(V,\xi_j^\Omega)|^2+|r_2(V,\xi_j^\Omega)|^2\bigr)\\
 &+\frac{m_-}{2M_-}\sum_{j=1}^{M_-}
       \bigl(|r_3(V,\xi_j^-)|^2+|r_4(V,\xi_j^-)|^2\bigr)
   +\frac{m_+}{2M_+}\sum_{j=1}^{M_+}|r_5(V,\xi_j^+)|^2,
\end{aligned}}
\end{equation}
\new{where $M=(M_\Omega,M_-,M_+)$}. \new{The measure factors in \eqref{eq:J-Monte-Carlo}
ensure that, for every fixed admissible $V$ independent of the samples,
\[
\mathbb{E}[\mathcal J_{b,M}(V)]=\mathcal J_b(V).
\]
The same analysis applies to fixed positive rescalings of the residual
terms, with the corresponding continuum functional and a modified
stability constant.}

\subsection{Training Algorithm}
The state, adjoint and control are trained jointly by minimising
$\mathcal J_h(V_\theta)$. We collect all trainable weights in
$\theta$. These may belong to one network with three outputs or to
three separate networks. For Monte Carlo quadrature, the samples are
drawn before training and held fixed during minimisation of $\mathcal
J_{b,M}$.  Deterministic rules are treated in the same way. We first
use Adam {\cite{revKingmaBa2015}} and, where specified, continue with
L-BFGS {\cite{revLiuNocedal1989}}. The procedure is summarised below.

\new{Here $\mathcal J_h$ denotes the quadrature approximation of the
chosen residual $\mathcal J_b$, $\mathcal J_U$ or $\mathcal J_0$.
Algorithm~\ref{alg:pinns} applies to all three choices by using the
corresponding boundary weights described above. The optimisation does
not project the parameters onto a prescribed bounded set. The relation
to the uniform sampling estimate is discussed after
Proposition~\ref{prop:uniform-quadrature}.}

\begin{algorithm}[htbp]
\caption{NN-based residual \new{minimisation}}\label{alg:pinns}
\begin{algorithmic}[1]
\Require \new{Initial parameters $\theta^0$, quadrature points and weights
(sampled once for Monte Carlo), Adam learning rate $\eta$, and budgets
$N_{\mathrm A}$ and $N_{\mathrm B}$ for Adam and L-BFGS}

\State Define $\ell(\theta)=\mathcal J_h(V_\theta)$ by
\eqref{eq:J-discrete}, using \eqref{eq:J-Monte-Carlo} for Monte Carlo samples
\For{\new{$k=0,\ldots,N_{\mathrm A}+N_{\mathrm B}-1$}}
\State \new{Evaluate $u_\theta,z_\theta,g_\theta$ at the interior and boundary quadrature points}
\State \new{Compute $\omega\cdot\nabla_xu_\theta$ and $\omega\cdot\nabla_xz_\theta$ by automatic differentiation}
\State \new{Form the state, adjoint, optimality, inflow and outflow residuals $r_1,\ldots,r_5$}

\State \new{Assemble $\ell(\theta^k)$ by the weighted sums in \eqref{eq:J-discrete}}
\State Compute \new{gradient $\nabla_\theta\ell(\theta^k)$ by automatic differentiation through the residuals}
\State Update \new{all network parameters together:}
\[
\new{\theta^{k+1}\gets
\begin{cases}
\operatorname{Adam}_{\eta}(\theta^k,\nabla_\theta\ell(\theta^k)),
 & k<N_{\mathrm A},\\
\operatorname{L\!\text{-}\!BFGS}(\theta^k,\ell,\nabla_\theta\ell),
 & k\ge N_{\mathrm A}.
\end{cases}}
\]
\State \new{Record the evaluated loss and residual contributions, identifying the parameter iterate used}

\EndFor
\State \Return \new{$(u_{\widehat\theta},z_{\widehat\theta},g_{\widehat\theta})$, where
$\widehat\theta=\theta^{N_{\mathrm A}+N_{\mathrm B}}$, and the residual history}
\end{algorithmic}
\end{algorithm}

\new{The Adam update retains its moment estimates between iterations.
At the change of optimiser, the L-BFGS history is initialised. Each
L-BFGS outer step may contain several inner iterations and line-search
evaluations. The network values, spatial derivatives, residuals and
parameter gradients are recomputed at each trial parameter vector.
Thus a recorded L-BFGS step is not equivalent in cost to an Adam step.
Setting $N_{\mathrm B}=0$ gives training with Adam alone.}

\new{For dose tracking, the adjoint residual uses
$D_hu_\theta(x)=\sum_iw_i u_\theta(x,\omega_i)$ in place of the angular
integral. Its parameter derivative therefore couples all directions
at each spatial point. We also denote this implemented loss by
$\mathcal J_{b,h}$. Its discrepancy from $\mathcal J_b$ includes the
inner angular approximation, as detailed in Section~\ref{sec:quadrature}. If the calculation is divided into spatial
chunks, each chunk retains the complete angular rule and its weighted
gradient contributions are accumulated before updating the parameters.}

\new{All conditions, including the inflow data and the boundary optimality
law, enter through the residual functional. The continuum stability
estimate applies to every returned network. To infer an error bound
from the discrete loss, the quadrature error must also be controlled.
For Monte Carlo quadrature, this is a sampling error, which motivates
the probabilistic estimates in the next section}.

\section{\new{Quadrature Error and A Posteriori Control}}
\label{sec:error-analysis}\label{sec:quadrature}
\new{The computed loss is a quadrature approximation to the continuum
residual. We now determine how this approximation enters the control of
the state, adjoint and control errors. Residual stability first gives
a deterministic estimate in terms of the discrete loss and its
quadrature discrepancy. For Monte Carlo quadrature, we bound this
discrepancy in mean square and with high probability, distinguishing
fixed fields from fields selected during training. We then consider
deterministic rules and the inner angular quadrature for dose tracking.}

\subsection{\new{A posteriori residual control}}
\label{sec:aposteriori}

\new{For $V=(u,z,g)$ and the unique optimality-system solution $V^*$,
write $E=H\times H\times G_-$ and
}\[
\new{\|V-V^*\|_E^2=\|u-u^*\|_H^2+\|z-z^*\|_H^2+\|g-g^*\|_{G_-}^2.
}\]
\new{The global stability estimate gives an a posteriori bound for every
admissible approximation, independently of its construction.}

\begin{corollary}[\new{A posteriori residual control}]
\label{thm:aposteriori}
\new{Every $V\in W\times W\times G_-$ satisfies
}\[
\new{\|V-V^*\|_E^2\le C_{\rm stab}\mathcal J_b(V),
}\]
\new{where $C_{\rm stab}$ depends only on $\alpha$, $L$ and
$\|\mathcal O\|$. In particular, $\mathcal J_b(V_N)\to0$ implies
$V_N\to V^*$ in $E$. If
$|\mathcal J_b(V)-\mathcal J_{b,h}(V)|\le q_h(V)$, then
}\[
\new{\|V-V^*\|_E^2
\le C_{\rm stab}\bigl(\mathcal J_{b,h}(V)+q_h(V)\bigr).
}\]
\end{corollary}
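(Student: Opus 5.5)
This corollary is a direct consequence of Theorem~\ref{thm:corrected-coercivity}, so the plan is to restate that estimate in the notation of the product space $E$ and then add the quadrature discrepancy. The first step is to observe that the left-hand side of \eqref{eq:corrected-coercivity} is exactly $\|V-V^*\|_E^2$ by the definition of the $E$-norm. Theorem~\ref{thm:corrected-coercivity} holds for every $(u,z,g)\in W\times W\times G_-$ with $C=C_{\rm stab}$, and this gives the first inequality. The dependence of $C_{\rm stab}$ only on $\alpha$, $L$ and $\|\mathcal O\|$ is inherited from the explicit constant built in that proof. It comes from $\kappa$, $\sqrt L$, $L$ and $\|\mathcal O\|^2$ collected in $\mathsf M$. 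No property of $V$ beyond membership in $W\times W\times G_-$ is used, so the bound is independent of how $V$ was constructed.

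The convergence statement then follows immediately. Applied to a sequence $V_N$, the estimate gives $\|V_N-V^*\|_E^2\le C_{\rm stab}\mathcal J_b(V_N)\to 0$, because $C_{\rm stab}$ does not depend on $N$. If the sequence consists of network triples, the discussion in Section~\ref{sec:NN} shows that they lie in $W\times W\times G_-$, so the corollary applies to them.

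For the quadrature version, the hypothesis $|\mathcal J_b(V)-\mathcal J_{b,h}(V)|\le q_h(V)$ gives $\mathcal J_b(V)\le \mathcal J_{b,h}(V)+q_h(V)$ by the triangle inequality. Substituting this into the first bound gives the second estimate; the constant multiplies a nonnegative quantity, so no sign issues arise.

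There is essentially no obstacle here. The only point that needs checking is that the weighted space $G_-$, rather than $U$, is used for the control component of $E$. This matches the norm controlled in Theorem~\ref{thm:corrected-coercivity}; as Remark~\ref{rem:grazing-control} shows, a bound in $U$ would require the strengthened residual $\mathcal J_U$ of Corollary~\ref{cor:unweighted-control}.
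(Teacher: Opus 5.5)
Your proposal is correct and matches the paper's proof: apply Theorem~\ref{thm:corrected-coercivity}, whose left-hand side is exactly $\|V-V^*\|_E^2$, to $V$ or to $V_N$. For the quadrature estimate, use $\mathcal J_b(V)\le\mathcal J_{b,h}(V)+|\mathcal J_b(V)-\mathcal J_{b,h}(V)|\le\mathcal J_{b,h}(V)+q_h(V)$.
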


\begin{proof}
\new{Theorem~\ref{thm:corrected-coercivity} gives
}\[
\new{\|V-V^*\|_E^2
\le C_{\rm stab}\mathcal J_b(V)
\le C_{\rm stab}\bigl[\mathcal J_{b,h}(V)
       +|\mathcal J_b(V)-\mathcal J_{b,h}(V)|\bigr].}
\]
\new{The assertions follow by applying the first inequality to $V_N$
and the second to the stated quadrature bound.}
\end{proof}

\new{Thus the continuum residual controls the solution error without
requiring the approximation to lie near the solution. A small discrete
loss has the same interpretation once its quadrature error is bounded.
The relation between stability, residual loss and quadrature error is
{also central to the general analyses of neural residual minimisation, e.g.~
in \cite{mishra2023generalization,revShin2023}.} In particular, for the Monte Carlo loss,
}\begin{equation}\label{eq:mc-aposteriori-decomp}
\new{\|V-V^*\|_E^2\le C_{\rm stab}\left[
\mathcal J_{b,M}(V)+
|\mathcal J_b(V)-\mathcal J_{b,M}(V)|\right].}
\end{equation}
\new{This inequality holds for every realisation of the samples. It
separates the loss reduced during training from the sampling error
that must be controlled to infer accuracy of the solution.}

\new{For the control-cost norm, if $V\in W\times W\times U$ and
$|\mathcal J_U(V)-\mathcal J_{U,h}(V)|\le q_{U,h}(V)$,
Corollary~\ref{cor:unweighted-control} similarly gives
\begin{equation}\label{eq:U-discrete-certificate}
 \|u-u^*\|_H^2+\|z-z^*\|_H^2+\|g-g^*\|_U^2
 \le C_U\bigl[\mathcal J_{U,h}(V)+q_{U,h}(V)\bigr].
\end{equation}
Multiplication by $(\alpha+L\|\mathcal O\|^2)/2$ also bounds
$j(g)-j(g^*)$. By \eqref{eq:residual-inclusion}, the same argument
applies to $\mathcal J_0$ with its own quadrature discrepancy.
The constants in these reliability bounds can be conservative,
as discussed in Appendix~\ref{app:boundary-stability}. A loss evaluated
on training nodes alone is a diagnostic, not a continuum certificate.}

\subsection{\new{Monte Carlo quadrature}}
\label{subsec:mc-quadrature}

\new{We begin with the sample-mean approximation introduced in
\eqref{eq:J-Monte-Carlo}. For a fixed field its error is governed by
the moments of the residual densities. When the field is chosen by
minimising the sampled loss, uniform control over the neural network class
allows the same quadrature argument to be used.}

\new{The sample-mean loss \eqref{eq:J-Monte-Carlo} approximates five
residual integrals. To cover both boundary sampling choices, write
}\[
\new{\mathcal J_b(V)
 =\sum_{k=1}^{K}\int_{D_k}F_k(V,y)\,d\mu_k(y),\qquad K=5,
}\]
\new{where $F_k=|r_k|^2/2$, and $\mu_k$ is the volume or
flux-weighted boundary measure. Fixed positive loss weights may be
included in $F_k$. For a sampling probability measure $\nu_k$ with
$\mu_k\ll\nu_k$, set
}\[
\new{R_k(V,y)=F_k(V,y)\frac{d\mu_k}{d\nu_k}(y),\qquad
\mathcal J_{b,M}(V)
 =\sum_{k=1}^{K}\frac1{M_k}\sum_{j=1}^{M_k}R_k(V,Y_{k,j}).}
\]
\new{For each $k$, the samples $Y_{k,j}$ are independent with law
$\nu_k$. In \eqref{eq:J-Monte-Carlo}, $M_1=M_2=M_\Omega$,
$M_3=M_4=M_-$ and $M_5=M_+$. The two volume terms share their
samples, as do the two inflow terms. The arguments below allow this
sharing between residual terms.}

\rev{The proofs of the Monte Carlo estimates are collected in
Appendix~\ref{app:quadrature-proofs}.}

\begin{lemma}[\new{Mean-square quadrature error}]
\label{lem:mc-rms}
\new{Fix $V$ independently of the samples and suppose
$R_k(V,\cdot)\in L^2(\nu_k)$ for $k=1,\ldots,K$. Then
$\mathbb{E}\new{[}\mathcal J_{b,M}(V)\new{]}=\mathcal J_b(V)$ and
}\begin{equation}\label{eq:mc-rms-bound}
\new{\left(\mathbb{E}\new{\bigl[}\left|
\mathcal J_{b,M}(V)-\mathcal J_b(V)\right|^2\new{\bigr]}\right)^{1/2}
\le\sum_{k=1}^{K}
\left(\frac{\operatorname{Var}_{\nu_k}(R_k(V,\cdot))}{M_k}\right)^{1/2}.}
\end{equation}
\end{lemma}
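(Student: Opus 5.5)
We decompose the sampled loss into its five sample-mean contributions and treat each one separately. For each $k$ define
\[
E_k=\frac1{M_k}\sum_{j=1}^{M_k}R_k(V,Y_{k,j})-\int_{D_k}F_k(V,y)\,d\mu_k(y),
\]
so that $\mathcal J_{b,M}(V)-\mathcal J_b(V)=\sum_{k=1}^K E_k$.

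\textbf{Unbiasedness.} Because $V$ is fixed independently of the samples, each $R_k(V,Y_{k,j})$ is a measurable function of a single sample with law $\nu_k$. The change of measure gives
\[
\mathbb E[R_k(V,Y_{k,j})]=\int R_k(V,y)\,d\nu_k(y)=\int F_k(V,y)\,\frac{d\mu_k}{d\nu_k}(y)\,d\nu_k(y)=\int F_k(V,y)\,d\mu_k(y),
\]
which is finite since $R_k(V,\cdot)\in L^2(\nu_k)\subset L^1(\nu_k)$, $\nu_k$ being a probability measure. Linearity then gives $\mathbb E[E_k]=0$ for each $k$, and hence $\mathbb E[\mathcal J_{b,M}(V)]=\mathcal J_b(V)$.

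\textbf{Variance of each term.} Within a fixed $k$ the samples $Y_{k,j}$ are i.i.d., so the centred summands are independent with mean zero and the cross terms vanish. This yields
\[
\mathbb E[E_k^2]=\frac{\operatorname{Var}_{\nu_k}(R_k(V,\cdot))}{M_k}.
\]

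\textbf{Combining the terms.} The different $k$ may share samples: $r_1,r_2$ use the same volume points and $r_3,r_4$ the same inflow points. We therefore do not use independence across $k$. Instead we apply Minkowski's inequality in $L^2(\mathbb P)$:
\[
\Bigl(\mathbb E\Bigl[\Bigl|\sum_k E_k\Bigr|^2\Bigr]\Bigr)^{1/2}\le\sum_k\bigl(\mathbb E[E_k^2]\bigr)^{1/2}.
\]
This gives \eqref{eq:mc-rms-bound} exactly, and explains why the bound is a sum of square roots rather than the square root of a sum.

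The argument is short and has no serious obstacle. The only points needing care are two: the independence of $V$ from the samples, which is what justifies computing the expectations above, and the avoidance of any cross-$k$ independence assumption, which is handled by the triangle inequality.
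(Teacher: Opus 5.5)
Your proposal is correct and follows essentially the same route as the paper's proof in Appendix~\ref{app:quadrature-proofs}. You centre each sample mean, use independence within each $k$ to kill the cross terms and obtain $\operatorname{Var}_{\nu_k}(R_k(V,\cdot))/M_k$, and then apply Minkowski's inequality across $k$ so that no independence between residual terms with shared samples is needed. You also make explicit the change-of-measure identity $\int R_k\,d\nu_k=\int F_k\,d\mu_k$, which the paper uses implicitly.
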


\new{This is the classical variance argument for Monte Carlo
integration \cite{revCaflisch1998}, applied separately to the
volume and boundary residuals.}

\new{With a common sample count $M_k=M$, the root-mean-square
quadrature error is therefore $O(M^{-1/2})$ for fixed fields. This
requires finite second moments of the squared residual densities.
Well-posedness in $L^2$ alone does not supply them. Bounded densities
give the following high-probability estimate.}

\begin{proposition}[\new{Monte Carlo quadrature control}]
\label{prop:corrected-validation}
\new{Fix $V\in W\times W\times G_-$ independently of the samples
and let $0<\delta<1$. Suppose finite bounds $B_k$ satisfy
$0\le R_k(V,y)\le B_k$ for $\nu_k$-almost every $y$. Then, with
probability at least $1-\delta$,
}\begin{equation}\label{eq:mc-concentration}
\new{\left|\mathcal J_{b,M}(V)-\mathcal J_b(V)\right|
\le\sum_{k=1}^{K}B_k
       \sqrt{\frac{\log(2K/\delta)}{2M_k}}.}
\end{equation}
\new{On the same event,
}\begin{equation}\label{eq:corrected-validation}
\new{\|V-V^*\|_E^2
\le C_{\rm stab}\left[\mathcal J_{b,M}(V)
 +\sum_{k=1}^{K}B_k\sqrt{\frac{\log(2K/\delta)}{2M_k}}\right].}
\end{equation}
\end{proposition}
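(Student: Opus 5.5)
The plan is to apply Hoeffding's inequality to each of the $K$ sample means separately, combine the $K$ events with a union bound, and then insert the resulting discrepancy bound into Corollary~\ref{thm:aposteriori}.

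\textbf{Step 1: one residual term.} Fix $k$. Since $V$ does not depend on the samples, the variables $R_k(V,Y_{k,j})$, $j=1,\dots,M_k$, are independent and identically distributed. They take values in $[0,B_k]$ almost surely. Their mean is
\[
\int R_k\,d\nu_k=\int F_k\,d\mu_k=:I_k,
\]
because $R_k=F_k\,d\mu_k/d\nu_k$. Hoeffding's inequality for bounded independent variables gives
\[
\mathbb P\Bigl(\Bigl|\tfrac1{M_k}\textstyle\sum_j R_k(V,Y_{k,j})-I_k\Bigr|>t_k\Bigr)\le 2\exp\!\bigl(-2M_kt_k^2/B_k^2\bigr).
\]
I choose $t_k=B_k\sqrt{\log(2K/\delta)/(2M_k)}$, which makes the right-hand side equal to $\delta/K$.

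\textbf{Step 2: all terms at once.} The main point needing care is that some terms share samples: terms $1$ and $2$ share the volume samples, and terms $3$ and $4$ share the inflow samples. Hence the $K$ sample means are not independent of one another. A union bound needs no independence across $k$, only independence within each $k$, which holds. Step 1 used only within-$k$ independence, so applying it to each $k$ separately remains valid. The union bound over the $K$ failure events then gives total failure probability at most $\delta$.

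\textbf{Step 3: conclusion.} On the complementary event, every deviation satisfies $|\text{mean}_k-I_k|\le t_k$. Since $\mathcal J_b(V)=\sum_k I_k$ and $\mathcal J_{b,M}(V)=\sum_k\text{mean}_k$, the triangle inequality gives \eqref{eq:mc-concentration}. On the same event I apply the second statement of Corollary~\ref{thm:aposteriori} with
\[
q_h(V)=\sum_k B_k\sqrt{\log(2K/\delta)/(2M_k)}.
\]
This yields \eqref{eq:corrected-validation}.

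No real obstacle remains beyond the sharing issue handled in Step 2. The independence of $V$ from the samples is essential: without it, the summands in Step 1 would not be independent and Hoeffding's inequality would not apply.
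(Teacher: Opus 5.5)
Your proposal is correct and matches the paper's proof step for step: Hoeffding's inequality for each residual term with $t_k=B_k\sqrt{\log(2K/\delta)/(2M_k)}$, then a union bound that needs only within-term independence (so shared samples are harmless), then the triangle inequality and substitution into the a posteriori bound. The one detail the paper adds is the degenerate case $B_k=0$: there your Hoeffding exponent $-2M_kt_k^2/B_k^2$ is undefined, but $R_k(V,\cdot)=0$ almost surely, so that deviation vanishes and the term can be dropped from the union bound.
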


\new{Here $B_k$ bounds the range of the sampled density. A variance
bound suffices for \eqref{eq:mc-rms-bound}, but not for this exponential
concentration estimate. A field selected by
minimising $\mathcal J_{b,M}$ depends on the samples. We therefore
need a quadrature estimate that holds simultaneously throughout the
admissible neural network class.}

\subsubsection{\new{Uniform control on the training class}}

\begin{proposition}[\new{Uniform quadrature control}]
\label{prop:uniform-quadrature}
\new{For a fixed architecture, let the admissible parameter set
$\Theta_N\subset\mathbb R^{P_N}$ be nonempty and compact, and write
$R_k(\theta,y)=R_k(V_\theta,y)$. Suppose, on a common
set of full $\nu_k$-measure, that for every
$\theta,\vartheta\in\Theta_N$,
}\[
\new{0\le R_k(\theta,y)\le B_k,\qquad
|R_k(\theta,y)-R_k(\vartheta,y)|
 \le L_k\|\theta-\vartheta\|.
}\]
\new{Define the uniform discrepancy by
}\[
\new{\varepsilon_{\text{quad}}(N,M)
 =\sup_{\theta\in\Theta_N}
 |\mathcal J_b(V_\theta)-\mathcal J_{b,M}(V_\theta)|.
}\]
\new{For $\eta>0$, let $\mathcal N_N(\eta)$ be the cardinality of a
finite $\eta$-net of $\Theta_N$, with its centres in $\Theta_N$.
For $0<\delta<1$, with probability at least $1-\delta$,
}\begin{equation}\new{\label{eq:corrected-uniform-MC}
 \varepsilon_{\text{quad}}(N,M)\le
 2\eta\sum_{k=1}^{K}L_k
 +\sum_{k=1}^{K} B_k
  \sqrt{\frac{\log\!\bigl(2K\mathcal N_N(\eta)/\delta\bigr)}{2M_k}}.
}\end{equation}
\end{proposition}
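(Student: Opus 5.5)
The plan is a standard $\eta$-net argument combining Hoeffding's inequality at finitely many fixed parameters with a Lipschitz interpolation step. Fix $\eta>0$ and a finite $\eta$-net $\{\theta_1,\ldots,\theta_{\mathcal N}\}\subset\Theta_N$, $\mathcal N=\mathcal N_N(\eta)$. The net is chosen deterministically, independently of the samples, so each $V_{\theta_i}$ is a fixed field in the sense of Proposition~\ref{prop:corrected-validation}.

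\emph{Step 1: concentration at each net point.} For each net point $\theta_i$ and each $k$, the sample mean $M_k^{-1}\sum_j R_k(\theta_i,Y_{k,j})$ has expectation $\int F_k(V_{\theta_i},\cdot)\,d\mu_k$, since $R_k=F_k\,d\mu_k/d\nu_k$. By Hoeffding's inequality for i.i.d.\ variables in $[0,B_k]$, it deviates from its mean by more than $B_k\sqrt{\log(2K\mathcal N/\delta)/(2M_k)}$ with probability at most $\delta/(K\mathcal N)$. This is exactly the two-sided Hoeffding bound with failure level $\delta/(K\mathcal N)$.

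\emph{Step 2: union bound.} I take a union bound over the $K\mathcal N$ pairs $(k,i)$. Sample sharing between residual terms, such as $M_1=M_2$, is harmless: the union bound needs no independence across $k$, only independence within each $k$. On an event of probability at least $1-\delta$, the triangle inequality over $k$ then gives
\[
|\mathcal J_b(V_{\theta_i})-\mathcal J_{b,M}(V_{\theta_i})|\le\sum_kB_k\sqrt{\log(2K\mathcal N/\delta)/(2M_k)}
\]
for all $i$ simultaneously.

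\emph{Step 3: extension to all of $\Theta_N$.} For arbitrary $\theta\in\Theta_N$, pick $\theta_i$ with $\|\theta-\theta_i\|\le\eta$. The Lipschitz hypothesis on the common full-measure set gives $|R_k(\theta,y)-R_k(\theta_i,y)|\le L_k\eta$ for $\nu_k$-a.e.\ $y$, and the sample points lie in this set almost surely. Averaging over the samples gives $|\mathcal J_{b,M}(V_\theta)-\mathcal J_{b,M}(V_{\theta_i})|\le\eta\sum_kL_k$. Integrating against $\nu_k$ gives the same bound for $|\mathcal J_b(V_\theta)-\mathcal J_b(V_{\theta_i})|$. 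Writing
\[
\mathcal J_b(V_\theta)-\mathcal J_{b,M}(V_\theta)=[\mathcal J_b(V_\theta)-\mathcal J_b(V_{\theta_i})]+[\mathcal J_b(V_{\theta_i})-\mathcal J_{b,M}(V_{\theta_i})]+[\mathcal J_{b,M}(V_{\theta_i})-\mathcal J_{b,M}(V_\theta)]
\]
yields the bound \eqref{eq:corrected-uniform-MC} uniformly in $\theta$, and taking the supremum finishes the argument.

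The only delicate points are measure-theoretic. First, the supremum defining $\varepsilon_{\text{quad}}$ must be measurable. This follows because Lipschitz continuity in $\theta$ and compactness of $\Theta_N$ allow restricting the supremum to a countable dense subset. Second, the almost-sure statements must hold simultaneously for all $\theta$, which is why the hypothesis requires a common full-measure set. Compactness is what guarantees that a finite net with centres in $\Theta_N$ exists, and I expect no substantive obstacle beyond this bookkeeping.
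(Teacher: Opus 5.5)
Your proposal is correct and follows essentially the same route as the paper. Both arguments apply Hoeffding's inequality at the points of a deterministic $\eta$-net with threshold $t_k=B_k\sqrt{\log(2K\mathcal N_N(\eta)/\delta)/(2M_k)}$, take a union bound over the $K\mathcal N_N(\eta)$ pairs, and use the Lipschitz hypothesis on the common full-measure set to pass from net points to all of $\Theta_N$, at a cost of $2L_k\eta$ per term. Your added remark on measurability of the supremum is a harmless refinement; the paper sidesteps it by asserting the bound on the complement of the measurable union-bound event.
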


\new{The event is uniform over $\Theta_N$, so it applies to an
approximation selected using those same samples, provided its
parameters belong to $\Theta_N$. The constants $B_k$, $L_k$ and the
covering number may grow with the architecture, so sample sizes must
increase accordingly. This is the distinction between quadrature
control for a fixed approximation and quadrature control throughout
training.}

\new{In particular, for a parameter $\widehat\theta\in\Theta_N$
returned by minimising this sampled loss,
}\[
\new{\|V_{\widehat\theta}-V^*\|_E^2
\le C_{\rm stab}\bigl[\mathcal J_{b,M}(V_{\widehat\theta})
                       +\varepsilon_{\text{quad}}(N,M)\bigr].}
\]
\new{Together with \eqref{eq:corrected-uniform-MC}, this gives the
probabilistic counterpart of the discrete residual estimate for the
trained approximation. It is this uniform quadrature control that
enters the convergence theorem below.}

Proposition~\ref{prop:uniform-quadrature} is a sufficient conditional
result for bounded parameter classes. The implemented Adam/L-BFGS
iterations are not projected onto a prescribed compact set
$\Theta_N$, so the hypotheses of the proposition are not asserted for
the optimisation trajectories used in Section~\ref{sec:numerics}.
Accordingly, Proposition~\ref{prop:uniform-quadrature} is used here as
a convergence framework, rather than as a probabilistic certificate
for the reported computations. For a fixed trained network, an
independent-sample certificate may instead be obtained under the
conditions described next.

\subsubsection{\new{Independent validation}}
\new{A second use of Proposition~\ref{prop:corrected-validation} is to
evaluate a trained approximation on fresh samples. Freeze
$\widehat V$ before drawing these samples. Conditional on training,
its residual densities are fixed, so \eqref{eq:corrected-validation}
holds with $V=\widehat V$ and with the loss evaluated on the validation
samples, provided the stated bounds $B_k$ hold conditionally.
The probability bound remains valid after averaging over training.
This gives an a posteriori estimate without requiring uniform control
over every network that training could have selected.}

\new{This is an available route to certification, rather than a
validation procedure performed for the reported trained networks.
It requires fresh independent samples and justified conditional bounds
on the residual densities. Reevaluation on a finer deterministic
quadrature rule alone does not supply this probabilistic certificate.}

\new{The independent-sample estimates above concern independent
position--direction pairs within each residual term. Reusing a direction
at many spatial points, or combining shared spatial and angular sample
sets, introduces dependence between the resulting pairs. Their product
count is therefore not the number of independent samples in these
estimates. The three-dimensional experiment uses such a sampling
structure. Its recorded training errors are not assigned a confidence
bound by substituting that product count into
\eqref{eq:mc-concentration}.}

\new{For $\mathcal J_U$, the same measure-based analysis uses
$\mu_3=dS_x\,d\omega$ for the optimality term. The corresponding
sampling weights and moment conditions are set out in
Appendix~\ref{app:U-sampling}.}

\subsection{\new{Deterministic quadrature}}\label{subsec:deterministic-quadrature}

\begin{lemma}[\new{Polynomial quadrature consistency}]
\label{lem:deterministic-quadrature}
\new{Let $Q_h$ be a positive quadrature rule with nodes in $D$ for a
finite measure $\mu$, exact on a polynomial space $\mathcal P_h$
containing constants. For a bounded integrand with specified pointwise
values, write $\|F\|_{\infty,D}=\sup_{y\in D}|F(y)|$. Then
}\begin{equation}\new{\label{eq:polynomial-quadrature-bound}
 \left|\int_D F\,d\mu-Q_hF\right|
 \le 2\mu(D)\inf_{p\in\mathcal P_h}\|F-p\|_{\infty,D}.
}\end{equation}
\end{lemma}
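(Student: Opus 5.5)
The plan is the standard exactness-plus-positivity argument. Fix an arbitrary $p\in\mathcal P_h$. Exactness of $Q_h$ on $\mathcal P_h$ gives $\int_D p\,d\mu=Q_hp$, and both sides are linear in the integrand, so
\[
 \int_D F\,d\mu-Q_hF=\int_D (F-p)\,d\mu-Q_h(F-p).
\]
We then bound the two terms on the right separately and take the infimum over $p$ at the end.

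For the integral term, $|\int_D (F-p)\,d\mu|\le\mu(D)\|F-p\|_{\infty,D}$. This holds because $\mu$ is a finite positive measure and the supremum dominates the integrand pointwise on $D$.

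For the quadrature term, write $Q_hG=\sum_j w_j G(y_j)$ with $w_j>0$ and nodes $y_j\in D$. Then
\[
 |Q_h(F-p)|\le\sum_j w_j\,|F(y_j)-p(y_j)|\le\Bigl(\sum_j w_j\Bigr)\|F-p\|_{\infty,D}.
\]
The nodes lie in $D$, so the pointwise supremum applies. This is where the hypothesis that $F$ has specified pointwise values matters, since $Q_h$ evaluates point values rather than an $L^\infty$ class. Since $\mathcal P_h$ contains constants, exactness on $1$ gives $\sum_j w_j=Q_h1=\mu(D)$. Adding the two bounds yields $2\mu(D)\|F-p\|_{\infty,D}$. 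Taking the infimum over $p\in\mathcal P_h$ gives \eqref{eq:polynomial-quadrature-bound}.

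There is no genuine obstacle. The one step needing care is the identification $\sum_j w_j=\mu(D)$: it uses positivity of the weights together with exactness on constants, and without it the stability constant of the rule would replace $\mu(D)$. We will also note that the infimum may be $+\infty$ only if $F$ is unbounded, which is excluded by assumption, so both sides are finite.
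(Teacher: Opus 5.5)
Your proposal is correct and follows essentially the same argument as the paper. Both subtract an arbitrary $p\in\mathcal P_h$ using exactness, bound the integral and quadrature terms by $\mu(D)\|F-p\|_{\infty,D}$ each via positivity and $\sum_j w_j=Q_h1=\mu(D)$, and take the infimum over $p$.
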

\begin{proof}
\new{Write $Q_hF=\sum_j w_jF(y_j)$. Positivity and exactness on
constants give $w_j\ge0$ and $\sum_jw_j=\mu(D)$. For every
$p\in\mathcal P_h$, polynomial exactness therefore yields
}\begin{align*}
\new{\left|\int_DF\,d\mu-Q_hF\right|}
&\new{=\left|\int_D(F-p)\,d\mu-\sum_jw_j(F-p)(y_j)\right|}\\
&\new{\le\int_D|F-p|\,d\mu+\sum_jw_j|F-p|(y_j)}\\
&\new{\le2\mu(D)\|F-p\|_{\infty,D}.}
\end{align*}
\new{Taking the infimum over $p$ proves the estimate.}
\end{proof}

\new{The estimate applies to each residual integral separately.
For a composite rule, applying it on each subinterval or integration
cell $D_\ell$, with local exactness space $\mathcal P_{h,\ell}$, gives
}\[
\new{\left|\int_DF\,d\mu-Q_hF\right|
\le2\sum_\ell\mu(D_\ell)
       \inf_{p\in\mathcal P_{h,\ell}}\|F-p\|_{\infty,D_\ell}.}
\]

\rev{For Gauss rules in reference coordinates, the flux factor,
geometric Jacobians and inflow or outflow indicator are included
in the integrand $F$. Polynomial exactness is therefore applied
to the reference measure.}

\new{A $q$-point Gauss--Legendre rule is exact through degree $2q-1$.
Its tensor product has this property in each coordinate. A spherical
$t$-design is exact for spherical polynomials of degree at most $t$.
Consequently, quadrature convergence follows from approximation of the
complete residual densities, including the coefficients, target data
and any geometric Jacobians. For a bound uniform over a neural network class,
the corresponding approximation estimates must also be uniform over
that class. Analytic extension to a common complex neighbourhood with
uniform bounds gives geometric polynomial-approximation estimates.
Finite differentiability gives the algebraic rate permitted by the
corresponding approximation estimate (see \cite[Chapters~7--8 and~19]{revTrefethen2019}) for these classical
polynomial-approximation and quadrature arguments. Smooth activation functions alone do not ensure exponential quadrature convergence. Discontinuous targets or coefficient interfaces require a separate piecewise analysis.}

\subsection{\new{Angular quadrature for dose tracking}}\label{subsec:angular-quadrature}
\new{For dose tracking, quadrature also enters inside the adjoint
residual through $D_hu$. Let $r_{2,h}$ denote the residual obtained by
replacing $Du$ by $D_hu$. Then
}\[
\new{r_2-r_{2,h}=D_hu-Du,\qquad
\|r_2-r_{2,h}\|_H
=|\mathbb S^{d-1}|^{1/2}\|Du-D_hu\|_{L^2(\Omega)},}
\]
\new{because the difference is constant in the angular variable.
Polarisation and the Cauchy--Schwarz inequality give
}\begin{align*}
\new{\frac12\left|\|r_2\|_H^2-\|r_{2,h}\|_H^2\right|}
&\new{=\frac12|(r_2-r_{2,h},r_2+r_{2,h})_H|}\\
&\new{\le\frac{|\mathbb S^{d-1}|^{1/2}}2
(\|r_2\|_H+\|r_{2,h}\|_H)\|Du-D_hu\|_{L^2(\Omega)}.}
\end{align*}
\new{This consistency term is added to the error in the outer
quadrature. Accordingly, the discrepancy
$|\mathcal J_b(V)-\mathcal J_{b,h}(V)|$ includes both approximations.}

\new{If the outer quadrature is Monte Carlo, the estimates above apply
to the modified residual with a fixed inner angular rule independent
of the outer samples. The displayed consistency term is then added to
control the original residual. For a uniform training estimate it must
also be bounded uniformly over the neural network class. In particular, an
unbiased estimate of $Du$ does not by itself give an unbiased estimate
of the squared adjoint residual.}

\section{\new{Neural Network Approximation and Convergence}}
\label{sec:approx-convergence}
\new{We now combine approximation in the transport space with the
quadrature estimates. The first result gives sufficient conditions
for the continuum residual to be approximated arbitrarily close
by neural networks. The convergence estimate then separates this
approximation error from quadrature and optimisation.}

\subsection{\new{Approximation in the graph and trace norm}}
\label{sec:approximation}

\new{We next consider approximation of the KKT solution by neural
networks. Since the residual contains transport derivatives and
boundary traces, approximation in $L^2$ alone is not sufficient. We use
the space
}\[
\new{\mathbb X=W\times W\times G_-,\qquad
\|V\|_{\mathbb X}^2=\|u\|_W^2+\|z\|_W^2+\|g\|_{G_-}^2,}
\]
\new{where $W$ carries the graph and trace norm introduced in
Subsection~\ref{subsec:trace}. Simultaneous approximation of a
function and its derivatives by neural networks was established in \cite{revHornik1990}. For the tanh
networks used here, we use the constructive approximation result from \cite{deryck2021tanh}.}

\begin{proposition}[\new{Neural network approximation}]
\label{prop:corrected-density}
\new{Suppose $V^*$ belongs to the $\mathbb X$-closure of triples obtained
by restricting smooth ambient functions to the phase domain and its
inflow boundary. Let the nested neural network classes $\mathcal A_N$ contain,
eventually, every three-output tanh network with two hidden layers
and finite width and parameters. Then
}\[
\new{\inf_{V\in\mathcal A_N}\|V-V^*\|_{\mathbb X}\longrightarrow0,
\qquad
\inf_{V\in\mathcal A_N}\mathcal J_b(V)
\le C\inf_{V\in\mathcal A_N}\|V-V^*\|_{\mathbb X}^2
\longrightarrow0.}
\]
\new{A sufficient condition for the closure hypothesis is
}\[
\new{u^*,z^*\in L^2(\mathbb S^{d-1},H^1(\Omega)),\qquad g^*\in G_-.}
\]
\new{Thus no angular derivatives are required for this sufficient
condition. It is an additional regularity assumption, rather than a
consequence of well-posedness for arbitrary $L^2$ data.}
\end{proposition}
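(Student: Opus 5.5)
We argue in three steps: continuity of the residual in $\mathbb X$, density of tanh networks in $\mathbb X$ via smooth ambient functions, and the sufficient condition for the closure hypothesis.

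\textbf{Continuity of the residual.} Every residual in $\mathcal J_b$ is affine in $V$ and vanishes at $V^*$ by Lemma~\ref{lem:residual-minimisers}. Write $V-V^*=(e_u,e_z,e_g)$. Then
\[
r_1=Te_u,\quad r_2=T^*e_z-\mathcal O^*\mathcal Oe_u,\quad r_3=\alpha e_g+b\gamma^-e_z,\quad r_4=\gamma^-e_u-e_g,\quad r_5=\gamma^+e_z .
\]
Each is bounded by the $\mathbb X$ norm. For the interior terms, $\|Tv\|_H\le(1+\|\sigma\|_\infty)\|v\|_W$, and the same holds for $T^*$. For the boundary terms, Lemma~\ref{lem:trace-green} gives $\|\gamma^\pm v\|_{G_\pm}\le\|v\|_W$, and $b\le1$ handles $r_3$. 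Hence
\[
\mathcal J_b(V)\le C\|V-V^*\|_{\mathbb X}^2,
\qquad C=C(\alpha,\|\sigma\|_\infty,\|\mathcal O\|).
\]
Taking infima over $\mathcal A_N$ gives the second assertion once the first holds.

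\textbf{Density.} Fix $\epsilon>0$. By the closure hypothesis, choose a triple $\tilde V$ of restrictions of smooth ambient functions with $\|\tilde V-V^*\|_{\mathbb X}<\epsilon$. Embed the compact set $\overline\Omega\times\mathbb S^{d-1}$ in a compact box $K\subset\mathbb R^{2d}$; in $d=2$ the angle is fed through $(\cos\vartheta,\sin\vartheta)$. We may take the ambient functions in $C^1(K)$.

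The $\mathbb X$ norm is dominated by the $C^1(K)$ norm, up to constants. Indeed, $\|v\|_H$ and $\|\omega\cdot\nabla_x v\|_H$ are bounded by $|Q|^{1/2}\|v\|_{C^1}$. The trace terms are bounded by $(m_\pm)^{1/2}\|v\|_{C^0}$, since $b\le1$ and $\Gamma^\pm$ has finite measure. Likewise $\|g\|_{G_-}\le m_-^{1/2}\|g\|_{C^0}$.

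Now apply the tanh approximation result of \cite{deryck2021tanh}, or \cite{revHornik1990}. It gives a two-hidden-layer tanh network, of finite width, approximating each smooth component in $W^{1,\infty}(K)$ to any accuracy. Using one three-output network, or stacking three networks, yields $V\in\mathcal A_N$ for $N$ large, by nestedness. This $V$ satisfies $\|V-\tilde V\|_{\mathbb X}<\epsilon$. Hence the infimum tends to zero, and monotonicity in $N$ follows from nestedness.

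\textbf{Sufficient condition.} This is the main obstacle. Trace terms are not controlled by the graph norm alone, so density must hold in the full $W$ norm, including $G_\pm$.

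Suppose $u^*\in L^2(\mathbb S^{d-1},H^1(\Omega))$. We mollify in $x$ after extending across $\partial\Omega$ with a bounded $H^1$ extension for Lipschitz $\Omega$. We approximate in $\omega$ by, for example, $C^\infty$ functions dense in the Bochner space $L^2(\mathbb S^{d-1},H^1)$. The result is a smooth $v_n\to u^*$ in $L^2(\mathbb S^{d-1},H^1(\Omega))$, which controls $\|v\|_H$ and $\|\omega\cdot\nabla_x v\|_H$.

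For the traces, use the $H^1$ trace inequality $\|v(\cdot,\omega)\|_{L^2(\partial\Omega)}\le C_{\rm tr}\|v(\cdot,\omega)\|_{H^1(\Omega)}$ for each $\omega$. Integrating over $\omega$, and using $b\le1$, gives
\[
\|\gamma^\pm v\|_{G_\pm}\le C_{\rm tr}\|v\|_{L^2(\mathbb S^{d-1},H^1)}.
\]
One must also check that this Sobolev trace agrees with the characteristic endpoint values defining $\gamma^\pm$. This holds for smooth functions and extends by continuity, since both maps are continuous into $L^2(\Gamma^\pm,b)$.

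For $g^*\in G_-$, use density of restrictions of $C_c^\infty(\mathbb R^{2d})$ functions in $L^2(\Gamma^-,b\,dS_x\,d\omega)$, a finite Radon measure on a closed set. This gives the closure hypothesis. No angular derivatives enter, since $\omega$ appears only as a parameter in every term.
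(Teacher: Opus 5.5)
Your proposal is correct and follows essentially the same route as the paper. It bounds $\mathcal J_b(V)\le C\|V-V^*\|_{\mathbb X}^2$ through the affine residual identities, uses $C^1$ approximation on a compact box via the two-hidden-layer tanh result of \cite{deryck2021tanh} together with domination of $\|\cdot\|_{\mathbb X}$ by $C^1$ norms, and obtains the sufficient condition from $H^1$ extension and mollification, the spatial trace theorem and $b\le1$.

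The paper makes your angular smoothing step precise through finite-rank tensors $\sum_j a_j(\omega)v_j(x)$. It leaves implicit the identification of the Sobolev trace with the characteristic endpoint trace, which you rightly flag. However, your phrase ``extends by continuity'' needs one further observation: graph-norm convergence gives, along a subsequence, $H^1$ convergence on almost every characteristic. Hence the limit of $\gamma^\pm v_n$ is indeed the characteristic trace of $u^*$.
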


\begin{proof}
\new{Fix $\varepsilon>0$. By the closure hypothesis, choose a
smooth ambient triple $V_\varepsilon$ such that
$\|V_\varepsilon-V^*\|_{\mathbb X}<\varepsilon/2$.
After multiplication by a smooth cutoff, its components may be
taken smooth on a compact box $B$ containing
$\overline\Omega\times\mathbb S^{d-1}$. For any smooth ambient
triple $V=(v_1,v_2,v_3)$, finiteness of the volume and boundary
measures and $|\omega|=1$ imply
}\[
\new{\|V\|_{\mathbb X}
\le C_B\left(\sum_{i=1}^3\|v_i\|_{C^1(B)}^2\right)^{1/2}.}
\]
\new{Indeed, the right-hand side controls the two volume norms,
the spatial directional derivatives and every weighted trace norm. By \cite[Theorem~5.1]{deryck2021tanh}, each component of $V_\varepsilon$ admits a two-hidden-layer tanh approximation in
$C^1(B)$. Placing the three scalar networks in parallel gives a
three-output network $V_{\theta_\varepsilon}$ such that
}\[
\new{\left(\sum_{i=1}^3
\|(V_{\theta_\varepsilon}-V_\varepsilon)_i\|_{C^1(B)}^2\right)^{1/2}
<\frac{\varepsilon}{2C_B}.}
\]
\new{This finite network belongs to $\mathcal A_N$ for all sufficiently
large $N$. Hence
}\[
\new{\inf_{V\in\mathcal A_N}\|V-V^*\|_{\mathbb X}
\le\|V_{\theta_\varepsilon}-V_\varepsilon\|_{\mathbb X}
   +\|V_\varepsilon-V^*\|_{\mathbb X}<\varepsilon.}
\]

For completeness, consider the stated sufficient condition. Since
$\Omega$ is Lipschitz, there is a bounded extension operator from
$H^1(\Omega)$ into $H^1(\mathbb R^d)$, and extension followed by
mollification shows that restrictions of functions smooth on a
neighbourhood of $\overline\Omega$ are dense in $H^1(\Omega)$.
Together with the density of smooth functions in
$L^2(\mathbb S^{d-1})$ and the density of finite-rank tensors in the
Bochner space, it follows that finite sums
\[
 \sum_{j=1}^m a_j(\omega)v_j(x),
 \qquad
 a_j\in C^\infty(\mathbb S^{d-1}),
\]
with each $v_j$ smooth on a neighbourhood of $\overline\Omega$, are
dense in
$L^2(\mathbb S^{d-1},H^1(\Omega))$. Smooth functions on the sphere
admit smooth extensions to a neighbourhood of $\mathbb S^{d-1}$, so
these tensor sums are restrictions of smooth ambient functions.

For $w\in L^2(\mathbb S^{d-1},H^1(\Omega))$, the spatial trace theorem
and $b\le1$ give
\[
 \|\omega\cdot\nabla_xw\|_H\le\|\nabla_xw\|_H,
 \qquad
 \|\gamma^-w\|_{G_-}^2+\|\gamma^+w\|_{G_+}^2
 \le C\|w\|_{L^2(\mathbb S^{d-1},H^1(\Omega))}^2.
\]
Hence, if $w_m$ is such a smooth tensor approximation,
\[
 \|w_m-w\|_W
 \le C\|w_m-w\|_{L^2(\mathbb S^{d-1},H^1(\Omega))}
 \longrightarrow0.
\]

For the control, let
$K=\partial\Omega\times\mathbb S^{d-1}$ and extend $g^*$ by zero from
$\Gamma^-$ to $K$. The measure
$b\,dS_x\,d\omega$ is a finite Borel measure on the compact set $K$,
so continuous functions on $K$ are dense in the corresponding
$L^2$ space. By the Stone--Weierstrass theorem, restrictions to $K$
of polynomials in the ambient coordinates are uniformly dense in
$C(K)$. Since such polynomials are smooth ambient functions, their
restrictions to $\Gamma^-$ approximate $g^*$ in $G_-$. This proves
the closure hypothesis.

\new{Finally, write $e_u=u-u^*$, $e_z=z-z^*$ and $e_g=g-g^*$.
Subtracting the zero residuals at $V^*$ gives
}\begin{align*}
\new{\|r_1\|_H}
&\new{\le\|\omega\cdot\nabla_x e_u\|_H+\|\sigma\|_\infty\|e_u\|_H,}\\
\new{\|r_2\|_H}
&\new{\le\|\omega\cdot\nabla_x e_z\|_H+\|\sigma\|_\infty\|e_z\|_H
             +\|\mathcal O\|^2\|e_u\|_H,}\\
\new{\|r_3\|_{G_-}}
&\new{\le\alpha\|e_g\|_{G_-}+\|\gamma^-e_z\|_{G_-},}\\
\new{\|r_4\|_{G_-}}
&\new{\le\|\gamma^-e_u\|_{G_-}+\|e_g\|_{G_-},\qquad
\|r_5\|_{G_+}=\|\gamma^+e_z\|_{G_+}.}
\end{align*}
\new{Squaring and summing, and using the definition of $\mathbb X$,
we obtain
}\[
\new{2\mathcal J_b(V)
=\sum_{k=1}^5\|r_k\|^2
\le C(\alpha,\|\sigma\|_\infty,\|\mathcal O\|)
       \|V-V^*\|_{\mathbb X}^2,}
\]
\new{where each residual uses its corresponding norm. Taking the
infimum over $\mathcal A_N$ completes the proof.}
\end{proof}

\new{The same argument applies to any smooth activation for which
approximation in $C^1$ is available. The result concerns increasing
neural network classes, allowing the width and parameter bounds to grow.
It does not assert consistency of a single fixed architecture.
Together with residual stability, it implies convergence of any
sequence of continuum near-minimisers whose optimisation gaps tend
to zero.}

\subsection{\new{Convergence}}
\label{sec:convergence}

\new{We conclude by combining approximation, quadrature and
optimisation. For each neural resolution $N$, choose a quadrature
rule $h=h(N)$. For Monte Carlo quadrature, this amounts to choosing
sample counts $M_k=M_k(N)$, and $\mathcal J_{b,h}$ denotes
$\mathcal J_{b,M}$. For classes
$\mathcal A_N=\{V_\theta:\theta\in\Theta_N\}$ satisfying the preceding
compactness, boundedness and Lipschitz hypotheses, the uniform discrepancy
is controlled by \eqref{eq:corrected-uniform-MC}.
The three contributions retain the notation
$\varepsilon_{\text{approx}}(N)$, $\varepsilon_{\text{quad}}(N)$ and
$\varepsilon_{\text{opt}}(N)$.}

\begin{theorem}[\new{Convergence under approximation, quadrature and optimisation}]
\label{thm:corrected-total-error}
\new{Let $\mathcal A_N\subset\mathbb X$ be a nonempty admissible neural network
class and $\widehat V_N\in\mathcal A_N$ a computed approximation
satisfying
}\[
\new{\mathcal J_{b,h}(\widehat V_N)
\le\inf_{V\in\mathcal A_N}\mathcal J_{b,h}(V)
   +\varepsilon_{\text{opt}}(N),\qquad
\varepsilon_{\text{opt}}(N)\ge0.}
\]
\new{Define
}\[
\new{\begin{aligned}
\varepsilon_{\text{approx}}(N)
   &:=\inf_{V\in\mathcal A_N}\mathcal J_b(V),\\
\varepsilon_{\text{quad}}(N)
   &:=\sup_{V\in\mathcal A_N}
       |\mathcal J_b(V)-\mathcal J_{b,h}(V)|.
\end{aligned}}
\]
\new{If $\varepsilon_{\text{quad}}(N)$ is finite, then
}\begin{equation}\label{eq:corrected-total-error}
\new{\|\widehat V_N-V^*\|_E^2
\le C_{\rm stab}\bigl(
\varepsilon_{\text{approx}}(N)
+2\varepsilon_{\text{quad}}(N)
+\varepsilon_{\text{opt}}(N)\bigr).}
\end{equation}
\new{Consequently, if all three contributions tend to zero, the state,
adjoint and control converge to the KKT solution in the error norm
$\|\cdot\|_E$.}
\end{theorem}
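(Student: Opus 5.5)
The argument combines the a posteriori estimate of Corollary~\ref{thm:aposteriori} with the standard three-term decomposition of the computed residual. By Theorem~\ref{thm:corrected-coercivity} applied to $\widehat V_N\in\mathcal A_N\subset\mathbb X=W\times W\times G_-$,
\[
\|\widehat V_N-V^*\|_E^2\le C_{\rm stab}\,\mathcal J_b(\widehat V_N),
\]
so it suffices to show $\mathcal J_b(\widehat V_N)\le\varepsilon_{\text{approx}}(N)+2\varepsilon_{\text{quad}}(N)+\varepsilon_{\text{opt}}(N)$.

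For this, first pass from the continuum to the discrete loss at the computed field: $\mathcal J_b(\widehat V_N)\le\mathcal J_{b,h}(\widehat V_N)+\varepsilon_{\text{quad}}(N)$, since $\widehat V_N\in\mathcal A_N$ and the supremum defining $\varepsilon_{\text{quad}}$ covers it. Next use the optimisation hypothesis, $\mathcal J_{b,h}(\widehat V_N)\le\inf_{\mathcal A_N}\mathcal J_{b,h}+\varepsilon_{\text{opt}}(N)$. Finally, for an arbitrary $V\in\mathcal A_N$, write $\inf_{\mathcal A_N}\mathcal J_{b,h}\le\mathcal J_{b,h}(V)\le\mathcal J_b(V)+\varepsilon_{\text{quad}}(N)$ and take the infimum over $V$ to obtain $\inf_{\mathcal A_N}\mathcal J_{b,h}\le\varepsilon_{\text{approx}}(N)+\varepsilon_{\text{quad}}(N)$. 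Chaining the three inequalities gives the claimed bound, with the factor $2$ in front of $\varepsilon_{\text{quad}}$ arising because the quadrature discrepancy is used once at $\widehat V_N$ and once at the near-optimal comparison field.

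The only delicate point is that all quantities are finite and the infima are taken over the same class. Finiteness of $\varepsilon_{\text{quad}}(N)$ is assumed. It makes $\mathcal J_{b,h}$ and $\mathcal J_b$ differ by a finite amount on $\mathcal A_N$, and $\mathcal J_b$ is finite on $\mathbb X$, so the infimum of $\mathcal J_{b,h}$ is finite and the subtraction steps are legitimate. I would handle the infimum by an $\epsilon$-argument (pick $V$ with $\mathcal J_b(V)\le\varepsilon_{\text{approx}}+\epsilon$, then let $\epsilon\to0$) rather than assume it is attained.

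The convergence statement then follows directly, since $C_{\rm stab}$ depends only on $\alpha$, $L$ and $\|\mathcal O\|$ and not on $N$. For Monte Carlo quadrature, the same chain holds on the event of Proposition~\ref{prop:uniform-quadrature}, giving a high-probability version.
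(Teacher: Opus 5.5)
Your proof is correct and follows the paper's argument: coercivity reduces the claim to bounding $\mathcal J_b(\widehat V_N)$. That bound comes from the chain quadrature, optimisation gap, quadrature, with the factor two arising because the discrepancy is used at both the computed field and the comparison field. The paper also avoids attainment by taking a near-minimiser of $\mathcal J_b$ over $\mathcal A_N$ and letting the slack tend to zero, just as you propose.
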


\begin{proof}
\new{For $\varepsilon>0$, choose $V_{N,\varepsilon}\in\mathcal A_N$
such that
$\mathcal J_b(V_{N,\varepsilon})
\le\varepsilon_{\text{approx}}(N)+\varepsilon$. Then
}\begin{align*}
\new{\mathcal J_b(\widehat V_N)}
&\new{\le\mathcal J_{b,h}(\widehat V_N)
        +\varepsilon_{\text{quad}}(N)}\\
&\new{\le\mathcal J_{b,h}(V_{N,\varepsilon})
        +\varepsilon_{\text{opt}}(N)+\varepsilon_{\text{quad}}(N)}\\
&\new{\le\mathcal J_b(V_{N,\varepsilon})
        +\varepsilon_{\text{opt}}(N)+2\varepsilon_{\text{quad}}(N)}\\
&\new{\le\varepsilon_{\text{approx}}(N)+\varepsilon
        +\varepsilon_{\text{opt}}(N)+2\varepsilon_{\text{quad}}(N).}
\end{align*}
\new{Let $\varepsilon\downarrow0$ and apply
Theorem~\ref{thm:corrected-coercivity}. No minimiser of the continuum
or discrete loss over $\mathcal A_N$ needs to be attained.}
\end{proof}

\new{The corresponding conclusion in $U$ requires
$\|r_3(\widehat V_N)\|_U\to0$ in addition to the three vanishing
error contributions above, with $\widehat g_N\in U$. Indeed,
\[
 \mathcal J_U(\widehat V_N)\le\mathcal J_b(\widehat V_N)
                +\tfrac12\|r_3(\widehat V_N)\|_U^2\to0,
\]
so Corollary~\ref{cor:unweighted-control} gives
$\widehat g_N\to g^*$ in $U$ and
$j(\widehat g_N)\to j(g^*)$. The added hypothesis concerns the
continuum residual,~i.e. its quadrature discrepancy must also be controlled.
Smoothness of the network alone does not imply this convergence.}

\rev{Theorem~\ref{thm:corrected-total-error} is stated for $\mathcal J_b$.
The same proof applies to $\mathcal J_U$ or $\mathcal J_0$, with
$C_U$, convergence in $H\times H\times U$, and approximation and
quadrature errors defined using the chosen loss. In particular,
$\inf_{\mathcal A_N}\mathcal J_b\to0$ alone does not imply
$\inf_{\mathcal A_N}\mathcal J_0\to0$.
For $\mathcal J_U$, sufficient approximation is convergence of the
state and adjoint in $W$ and of the control in $U$, since
$\|b\gamma^-v\|_U\le\|\gamma^-v\|_{G_-}$.
For $\mathcal J_0$, convergence of $\gamma^-u$ and $\gamma^+z$
in unweighted boundary $L^2$ is also sufficient. The spatial
$H^1$ condition in Proposition~\ref{prop:corrected-density}, together
with $g^*\in U$, supplies these trace approximations. This additional
regularity has not been established for the indicator-target examples.}

\begin{remark}[\new{Approximation, quadrature and optimisation}]
\new{A related separation of errors is used in the analysis of deep
Uzawa methods \cite{makridakis2025kineticuzawa}. Here residual
stability connects these three errors to the state, adjoint and
control. Proposition~\ref{prop:corrected-density} gives sufficient
conditions for the approximation error to vanish, while
Section~\ref{sec:quadrature} supplies deterministic and probabilistic
quadrature bounds. The factor two in
\eqref{eq:corrected-total-error} reflects the use of quadrature at
both the computed network and the continuum comparison function.
The optimisation gap remains an assumption on the returned
approximation. Convexity of the residual in the fields does not
imply convexity in the network parameters or convergence of a
particular training algorithm. Its nonnegative empirical loss is
an upper bound for this gap.}
\end{remark}

\begin{corollary}[\new{Convergence for random quadrature}]
\label{cor:random-convergence}
\new{Under the hypotheses of Theorem~\ref{thm:corrected-total-error},
suppose deterministic sequences $a_N,o_N,q_N\ge0$ tend to zero,
with $\varepsilon_{\rm approx}(N)\le a_N$,
$\varepsilon_{\rm opt}(N)\le o_N$ almost surely, and
}\[
\new{\mathbb P\bigl(\varepsilon_{\rm quad}(N)\le q_N\bigr)
\ge1-\delta_N,\qquad 0<\delta_N<1.}
\]
\new{If $\delta_N\to0$, then $\widehat V_N\to V^*$ in probability
in $E$. If $\sum_N\delta_N<\infty$, the convergence is almost sure.}
\end{corollary}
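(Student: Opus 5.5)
The proof combines the deterministic total-error estimate \eqref{eq:corrected-total-error} of Theorem~\ref{thm:corrected-total-error} with the probabilistic control of $\varepsilon_{\rm quad}(N)$. Let $\Omega_N$ denote the event $\{\varepsilon_{\rm quad}(N)\le q_N\}$. On $\Omega_N$ the quadrature discrepancy is finite, so Theorem~\ref{thm:corrected-total-error} applies pathwise to the realised samples. Together with the almost sure bounds $\varepsilon_{\rm approx}(N)\le a_N$ and $\varepsilon_{\rm opt}(N)\le o_N$, it gives, almost surely on $\Omega_N$,
\[
 \|\widehat V_N-V^*\|_E^2\le C_{\rm stab}\,(a_N+2q_N+o_N)=:\beta_N .
\]
The sequence $\beta_N$ is deterministic and tends to zero. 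The constant $C_{\rm stab}$ depends only on $\alpha$, $L$ and $\|\mathcal O\|$, so it does not vary with $N$ or with the samples. The step needing the most care is this pathwise use of the theorem. The computed $\widehat V_N$ depends on the samples, and so does $\mathcal J_{b,h}$. Theorem~\ref{thm:corrected-total-error} is a deterministic statement for each fixed realisation, however, so applying it realisation by realisation is legitimate. I would also note that measurability of $\|\widehat V_N-V^*\|_E$ is assumed implicitly, or else one argues with outer probability.

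For convergence in probability, fix $\epsilon>0$. Since $\beta_N\to0$, there is $N_0$ with $\beta_N<\epsilon^2$ for all $N\ge N_0$. For such $N$,
\[
 \mathbb P\bigl(\|\widehat V_N-V^*\|_E>\epsilon\bigr)\le\mathbb P(\Omega_N^c)\le\delta_N\to0 .
\]

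For almost sure convergence under $\sum_N\delta_N<\infty$, the Borel--Cantelli lemma shows that, with probability one, only finitely many of the events $\Omega_N^c$ occur. Hence almost every sample sequence lies in $\Omega_N$ for all $N\ge N_1(\text{sample})$. On the full-measure intersection with the almost sure optimisation and approximation bounds, $\|\widehat V_N-V^*\|_E^2\le\beta_N$ for all large $N$, so $\widehat V_N\to V^*$ in $E$. Independence across $N$ is not needed, because the first Borel--Cantelli lemma holds without it.
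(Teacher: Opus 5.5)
Your proposal is correct and matches the paper's proof: on the event $\{\varepsilon_{\rm quad}(N)\le q_N\}$ you apply the total-error estimate pathwise to obtain the deterministic bound $C_{\rm stab}(a_N+2q_N+o_N)$. You then bound the exceedance probability by $\delta_N$ and use the first Borel--Cantelli lemma, which needs no independence, for almost sure convergence. Your remarks that $\varepsilon_{\rm quad}(N)$ is finite on that event and that measurability of the error must be assumed are sensible details that the paper leaves implicit.
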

\begin{proof}
\new{On $A_N=\{\varepsilon_{\rm quad}(N)\le q_N\}$, the total-error
estimate gives, almost surely,
}\[
\new{\|\widehat V_N-V^*\|_E^2
\le C_{\rm stab}(a_N+2q_N+o_N).}
\]
\new{For any $\varepsilon>0$, the right-hand side is at most
$\varepsilon^2$ for all sufficiently large $N$. Thus
}\[
\new{\mathbb P\bigl(\|\widehat V_N-V^*\|_E>\varepsilon\bigr)
\le\mathbb P(A_N^c)\le\delta_N\longrightarrow0.}
\]
\new{If $\sum_N\delta_N<\infty$, Borel--Cantelli gives
$\mathbb P(A_N^c\text{ infinitely often})=0$. The deterministic
bound then implies almost-sure convergence. Independence between
refinement levels is not needed.}
\end{proof}

\new{The estimate concerns the squared solution error. An additive
quadrature contribution of order $M^{-1/2}$ therefore gives an
$M^{-1/4}$ contribution to the solution norm, with the associated
confidence and class-complexity factors. Obtaining an accuracy--cost
bound would additionally require quantitative approximation rates
and a bound on the cost of reducing the optimisation gap.}

\section{Numerical Results}\label{sec:numerics}

\rev{The numerical experiments examine the approximation of the optimality
system and the behaviour of the residual during training. Circular field
and dose targets in the square demonstrate the numerical
use of the two control objectives. Manufactured solutions in the disc and the three-dimensional ball allow
comparison with the exact state, adjoint and control, supporting the
residual-stability analysis. Families of ellipsoidal and bilobed dose targets in the cube demonstrate joint approximation over varying shapes, using one network for each target family. The exact characteristic reference also permits a residual--error comparison for circular field tracking. Supplementary computations are collected in Appendix~\ref{app:supplementary-numerics}.

We distinguish the dose approximation error $\|D(u_\theta)-D(u^*)\|$
from the optimal target mismatch $\|D(u^*)-q\|$, which may be nonzero.
When a reference solution is not known, we report the target mismatch $\|D(u_\theta)-q\|$ to assess agreement with the prescribed dose.}
\subsection*{\rev{Computational setting description}}
\rev{Table~\ref{tab:experiment-settings} collects the settings of the performed experiments. Each trained approximation is a fully connected network with three outputs $(u,z,g)$ and inputs $(x,\omega)$,
using four scalar coordinates in two dimensions and six in three dimensions.
The parametric examples add the scalar shape parameter $\mu$ as a seventh input.
The following initialisation and training settings apply to all neural
computations except the ball computation in Example~\ref{ex:ball}.
These computations use Xavier-uniform initialisation with zero biases. Training consists of $500$ Adam updates at learning rate $10^{-4}$ followed by $750$ outer L-BFGS updates at learning rate $1$, with a strong Wolfe line search, history size $50$ and at most $20$ inner iterations per update.
The parametric ellipsoidal and bilobed experiments use $2\,000$ outer L-BFGS updates.}

\rev{The computations minimise the discrete residual functional
$\mathcal J_{b,h}$. Errors and final residuals are evaluated using
the finer quadrature rules listed in Table~\ref{tab:experiment-settings}.
For the three-dimensional field benchmark in Example~\ref{ex:ball},
the reported diagnostics use the training samples.}

\begin{table}[!htbp]
\centering
\caption{\rev{Computational settings for the main and supplementary experiments.
Network entries give hidden-layer count $\times$ width and activation.
Node triples are $(N_x,M,N_b)$, with $N_x$ total spatial nodes, $M$ directions
and $N_b$ total boundary spatial nodes before the inflow/outflow split.
Parametric counts include all five shapes.
Product rules give $N_xM$ volume pairs and $N_bM$ boundary pairs before
splitting.}}
\label{tab:experiment-settings}
{\color{black}\footnotesize
\setlength{\tabcolsep}{3pt}
\renewcommand{\arraystretch}{1.16}
\begin{tabular}{@{}p{.27\linewidth}lccc@{}}
\toprule
Experiment & Network & Training nodes & Evaluation nodes & Arithmetic\\
\midrule
Circular and triangular fields in the square
(\ref{ex:circle}, \ref{ex:triangle})
& $12\times64$ Tanh & $(10\,800,75,360)$ & $(43\,200,150,720)$ & 64 bit, CUDA\\
Regular disc (\ref{ex:regular-disc-revised})
& $8\times64$ Tanh & $(8\,100,75,900)$ & $(21\,600,150,1\,800)$ & 64 bit, CUDA\\
Manufactured dose in the disc (\ref{ex:manufactured-dose})
& $12\times64$ Tanh & $(8\,100,75,270)$ & $(21\,600,150,540)$ & 32 bit, MPS\\
Circular dose in the square (\ref{ex:circle-dose}), uniform
& $12\times64$ Tanh & $(10\,800,75,360)$ & $(43\,200,150,720)$ & 64 bit, CUDA\\
Same target
& $12\times64$ SiLU & $(10\,800,75,360)$ & $(43\,200,150,720)$ & 64 bit, CUDA\\
Field and dose in the disc (\ref{ex:localised})
& $12\times64$ Tanh & $(10\,800,75,270)$ & $(43\,200,150,540)$ & 64 bit, CUDA\\
Manufactured dose in the ball (\ref{ex:ball-dose})
& $8\times64$ Tanh & $(10\,240,288,800)$ & $(22\,400,800,1\,568)$ & 64 bit, CUDA\\
Parametric ellipsoidal dose targets (\ref{ex:cube-sphere-dose})
& $8\times64$ Tanh & $(10\,000,288,1\,920)$ & $(20\,000,800,3\,840)$ & 64 bit, CUDA\\
Parametric bilobed dose targets (\ref{ex:bilobed-dose-ongoing})
& $8\times64$ Tanh & $(10\,000,288,1\,920)$ & $(20\,000,800,3\,840)$ & 64 bit, CUDA\\
\midrule
Manufactured ball (\ref{ex:ball})
& $8\times64$ Tanh & \shortstack{$(10\,000,30,900)$\\$N_x$ per direction} & Training rule & 32 bit\\
Fixed-field Monte Carlo (\ref{sec:mc-residual-experiment})
& Analytic fields & \multicolumn{2}{c}{$2^8,2^{10},\ldots,2^{18}$ samples per domain} & No training\\
\bottomrule
\end{tabular}
}
\end{table}

\rev{Table~\ref{tab:quadrature-construction} gives the quadrature
construction details. Two-dimensional deterministic rules use composite three-point
Gauss--Legendre quadrature. The parametric cube examples use fixed uniform
Monte Carlo samples in space and on each boundary face, with independent
samples for evaluation. The ball rule uses Gauss--Legendre nodes in radius and polar cosine,
with periodic azimuthal nodes. The cube direction rule uses the same
polar-cosine and azimuthal construction. Spatial weights include cell
areas or volumes and the polar or spherical Jacobian where appropriate.
Angular integrals use the corresponding direction weights.}

\begin{table}[!htbp]
\centering
\caption{\rev{Construction of the quadrature and sampling rules.
Entries count cells, quadrature nodes or samples as indicated in the first column.
}}
\label{tab:quadrature-construction}
{\color{black}\small
\setlength{\tabcolsep}{8pt}
\renewcommand{\arraystretch}{1.18}
\begin{tabular}{@{}p{.54\linewidth}cc@{}}
\toprule
Rule component & Training & Evaluation\\
\midrule
Square, uniform spatial cells & $40\times30$ & $80\times60$\\
Disc, manufactured spatial nodes $(r,\varphi)$ & $90\times90$ & $180\times120$\\
Disc, target-problem spatial nodes $(r,\varphi)$ & $120\times90$ & $240\times180$\\
\midrule
Ball, spatial nodes $(r,\cos\vartheta,\varphi)$ & $20\times16\times32$ & $28\times20\times40$\\
Ball, direction nodes $(\cos\vartheta_\omega,\varphi_\omega)$ & $12\times24$ & $20\times40$\\
Ball, boundary nodes $(\cos\vartheta,\varphi)$ & $20\times40$ & $28\times56$\\
\midrule
Parametric cube, spatial samples per shape & $2\,000$ & $4\,000$\\
Parametric cube, samples per face and shape & $64$ & $128$\\
Cube, direction nodes $(\cos\vartheta_\omega,\varphi_\omega)$ & $12\times24$ & $20\times40$\\
\bottomrule
\end{tabular}
}
\end{table}

\subsection{\rev{Example: Field and dose tracking of a circular target in the square}}
\label{ex:circle-square}

\subsubsection{\rev{
\rev{Circular field target under directional irradiation}}}
\label{ex:circle}

Let $\Omega=(0,1)^2$, $T=B_{0.1}(0.5,0.5)$,
$\sigma=0.1$ and $\alpha=10^{-4}$. We minimise the field-tracking
objective
\begin{equation}\label{eq:revised-field-objective}
F_{\mathrm{field}}(u,g)
=\frac12\|u-\chi_T\|_{L^2(\Omega\times S^1)}^2
+\frac{\alpha}{2}\|g\|_{L^2(\Gamma^-,\,dS_x\,d\omega)}^2
\end{equation}
subject to $\omega\cdot\nabla u+\sigma u=0$ and $\gamma^-u=g$.
The associated adjoint and optimality equations are
\[
-\omega\cdot\nabla z+\sigma z=u-\chi_T,
\qquad \gamma^+z=0,
\qquad \alpha g+b\gamma^-z=0,
\]
where $b=|\omega\cdot n|$. The field target is unscaled.
We also evaluate the angular integral
$D(u)(x)=\int_{S^1}u(x,\omega)\,d\omega$.

Formula~\eqref{eq:rev-characteristic-control} gives the exact optimal
control and state from the analytic intersections of each ray with
$T$. For the distance $\tau_+(x,\omega)$ to the outflow boundary,
the exact adjoint is
\[
z^*(x,\omega)=\int_0^{\tau_+(x,\omega)}e^{-\sigma s}
\bigl(u^*(x+s\omega,\omega)-\chi_T(x+s\omega)\bigr)\,ds.
\]
Both terms are integrated analytically, splitting at the target
crossings.

Figure~\ref{fig:circle-field-optimum} compares the approximate and
exact directional fluxes at $64.8^\circ$ and $151.2^\circ$,
together with their angular integrals and spatial cross-sections.
The elongated profiles follow straight characteristics through the
circular target. Their location is determined by the projection of
$T$ onto the inflow boundary along each direction, while their
transverse variation reflects the different lengths of intersection
with the target. Formula~\eqref{eq:rev-characteristic-control}
makes this dependence explicit: the optimal inflow is zero on rays
that miss $T$, and its value on intersecting rays balances their
attenuation-weighted overlap with $T$ against the field energy and
control penalty. The approximate directional fields reproduce the
resulting beam structure.

Integrating these fields over the $150$ directions of the evaluation
rule produces a maximum near the centre of the target and a gradual
decrease away from it. A directional flux cannot be confined to $T$,
since it propagates from the inflow boundary and continues beyond
the target along the same characteristic. The surrounding contribution
in the angular integral is therefore part of the transport response.
The training loss decreases to $\mathcal J_{b,h}=4.931\times10^{-5}$,
with a slower decrease towards the end of training. The characteristic
comparison quantifies how accurately this computation recovers the
optimal field and its angular integral.

\begin{figure}[htbp]
\centering
\includegraphics[width=.98\linewidth]{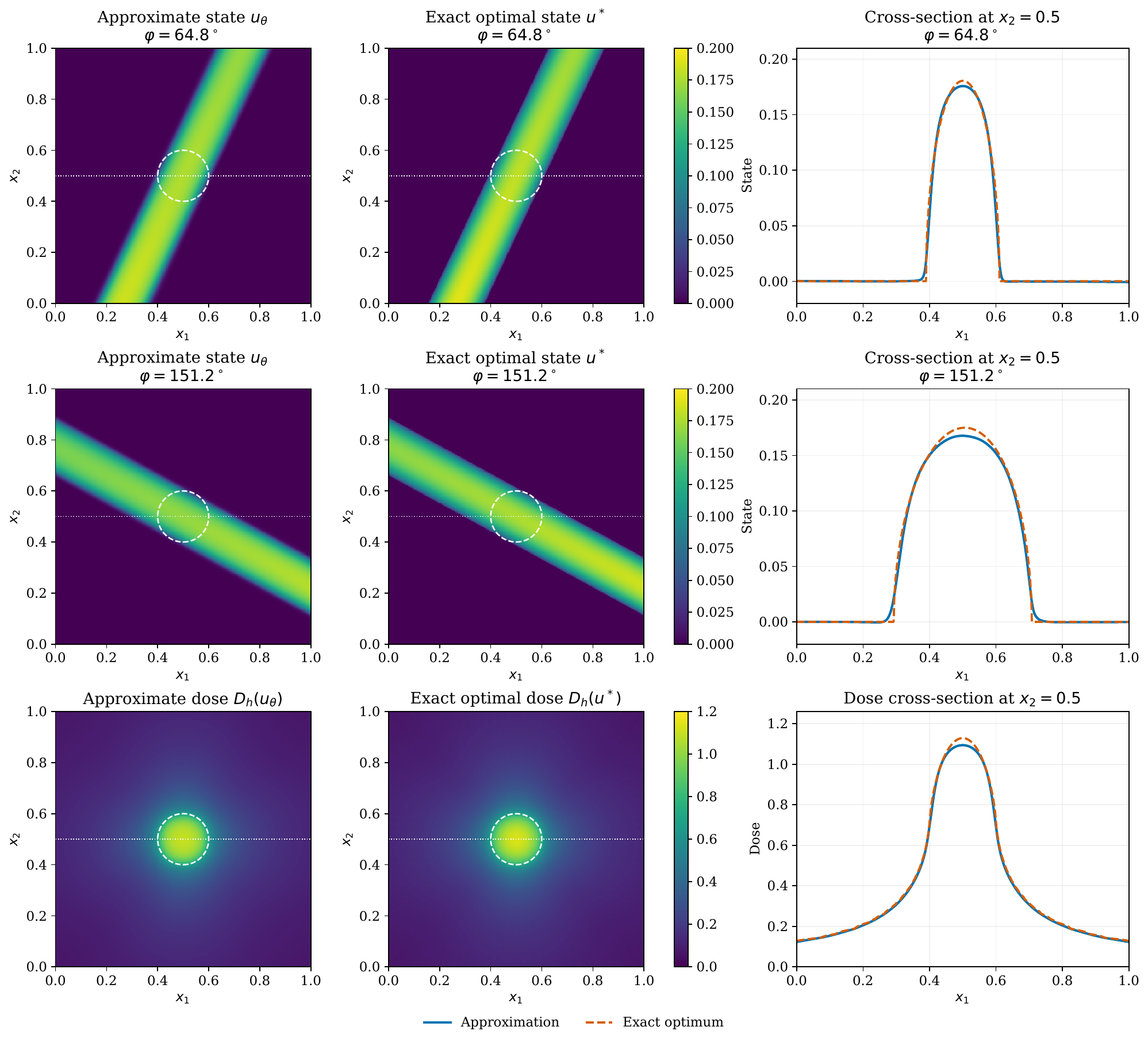}
\caption{\rev{Circular field target. Approximate and exact optimal
states at $64.8^\circ$ and $151.2^\circ$ occupy the first two rows.
Their angular integrals occupy the third row. The right column
gives cross-sections at $x_2=0.5$. Both doses use the same
$150$-direction Gauss rule. Dashed circles mark $\partial T$ and
each pair of maps has a common colour scale.}}
\label{fig:circle-field-optimum}
\end{figure}

\rev{Table~\ref{tab:circle-field-kkt-errors} reports errors against the
exact optimum on the separate evaluation rule. The state and adjoint
use the volume norm $H$, and the control is measured in both $G_-$
and $U$. The dose error compares angular integrals evaluated with the
same rule. At the target centre, the approximate and reference doses
are $1.0941$ and $1.1290$.}

\begin{table}[htbp]
\centering
\caption{\rev{Circular field target. Absolute and relative errors
against the characteristic optimum on the separate evaluation rule. Here $v_\theta$ and $v_{\rm ref}$ are the approximate and reference quantities in each row, and $X$ is the indicated norm.
Checkpoint diagnostics are reported in Table~\ref{tab:circle-field-stages}.}}
\label{tab:circle-field-kkt-errors}
{\color{black}\small
\begin{tabular}{lrr}
\toprule
Quantity and norm & \shortstack{$\|v_\theta-v_{\rm ref}\|_X$}
& \shortstack{$\dfrac{\|v_\theta-v_{\rm ref}\|_X}{\|v_{\rm ref}\|_X}$}\\
\midrule
State, $L^2(\Omega\times S^1)$ & $8.3760\times10^{-3}$ & $4.8223\times10^{-2}$\\
Adjoint, $L^2(\Omega\times S^1)$ & $3.0118\times10^{-3}$ & $6.3667\times10^{-2}$\\
Control, $G_-$ & $8.4473\times10^{-3}$ & $4.8298\times10^{-2}$\\
Control, $U$ & $8.9773\times10^{-3}$ & $4.8829\times10^{-2}$\\
Dose, $L^2(\Omega)$ & $4.6927\times10^{-3}$ & $1.8093\times10^{-2}$\\
\bottomrule
\end{tabular}}
\end{table}

Figure~\ref{fig:circle-field-adjoint} completes the comparison with
the exact adjoint. The cross-sections resolve the changes of slope
where the rays enter and leave the target. Together with the control
errors, this checks recovery of the coupled optimality system.

\begin{figure}[!htbp]
\centering
\includegraphics[width=.98\linewidth]{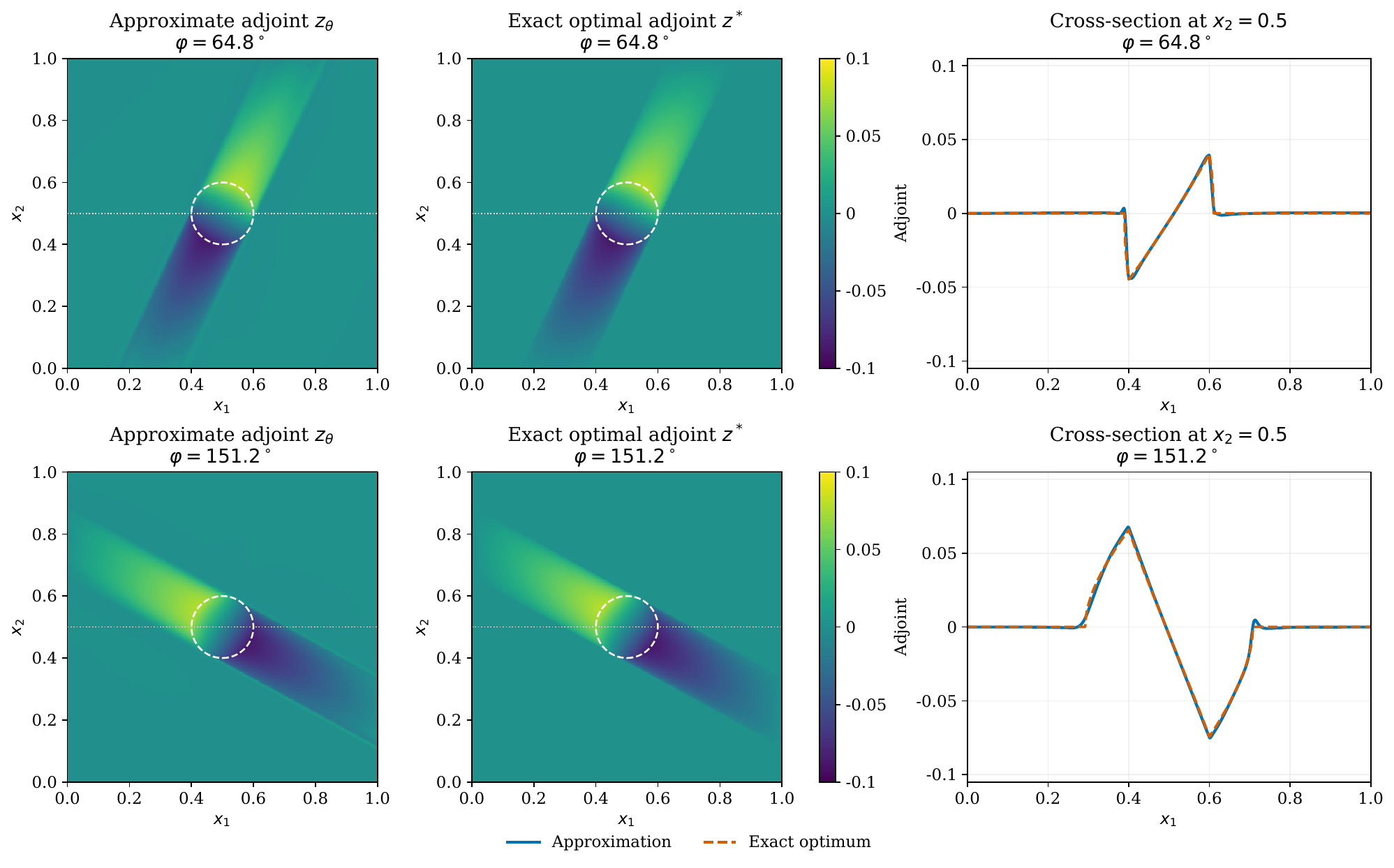}
\caption{\rev{Circular field target. Approximate and exact adjoints
at $64.8^\circ$ and $151.2^\circ$, with cross-sections at $x_2=0.5$.
Each approximate/exact pair shares the same colour scale. The
dashed circle marks $\partial T$.}}
\label{fig:circle-field-adjoint}
\end{figure}

\rev{Errors and residuals are evaluated together on the same separate rule, at different checkpoints, and reported in Table~\ref{tab:circle-field-stages}. The combined error decreases throughout the recorded trajectory, while $E_b^2/\mathcal J_{b,h}$ ranges from
$0.15636$ to $1.34704$ and $E_U^2/\mathcal J_{U,h}$ from $0.16581$ to $1.56912$.}

\begin{table}[htbp]
\centering
\caption{\rev{Circular field target. Combined squared errors and
integral residuals at selected checkpoints, evaluated on the common
separate rule. Here
$E_b^2=\|u_\theta-u^*\|_H^2+\|z_\theta-z^*\|_H^2+
\|g_\theta-g^*\|_{G_-}^2$ and
$E_U^2=\|u_\theta-u^*\|_H^2+\|z_\theta-z^*\|_H^2+
\|g_\theta-g^*\|_U^2$.}}
\label{tab:circle-field-stages}
{\color{black}\small
\begin{tabular}{rrrrr}
\toprule
Completed updates & $\mathcal J_{b,h}$ & $E_b^2$
& $E_b^2/\mathcal J_{b,h}$ & $E_U^2/\mathcal J_{U,h}$\\
\midrule
0 & $6.1860\times10^{-1}$ & $7.5738\times10^{-1}$ & $1.22434$ & $1.56912$\\
500 & $9.4751\times10^{-2}$ & $5.2949\times10^{-2}$ & $0.55882$ & $0.60424$\\
600 & $2.5388\times10^{-3}$ & $3.4198\times10^{-3}$ & $1.34704$ & $1.40727$\\
800 & $1.1423\times10^{-3}$ & $3.9042\times10^{-4}$ & $0.34179$ & $0.36067$\\
1000 & $9.9502\times10^{-4}$ & $2.3105\times10^{-4}$ & $0.23221$ & $0.24626$\\
1250 & $9.6308\times10^{-4}$ & $1.5059\times10^{-4}$ & $0.15636$ & $0.16581$\\
\bottomrule
\end{tabular}}
\end{table}

{\color{black}
\subsubsection{\rev{Direct dose tracking of a circular target}}\label{ex:circle-dose}
\rev{On the same geometry and with the same $\sigma$ and $\alpha$,
we now prescribe the integrated dose $q=\chi_T$.} Thus the observation is
$D(u)(x)=\int_{S^1}u(x,\omega)\,d\omega$, and the objective is
\begin{equation}\label{eq:circle-dose-objective}
 F_{\rm dose}(u,g)=\frac12\|D(u)-q\|_{L^2(\Omega)}^2
                  +\frac\alpha2\|g\|_U^2.
\end{equation}
The adjoint source is $D(u)-q$, and stationarity remains $\alpha g+b\gamma^-z=0$.

\rev{We compare Tanh and SiLU networks on the same uniform spatial
quadrature, with the settings in Table~\ref{tab:experiment-settings}.}

\begin{figure}[!htbp]
\centering
\includegraphics[width=\linewidth]{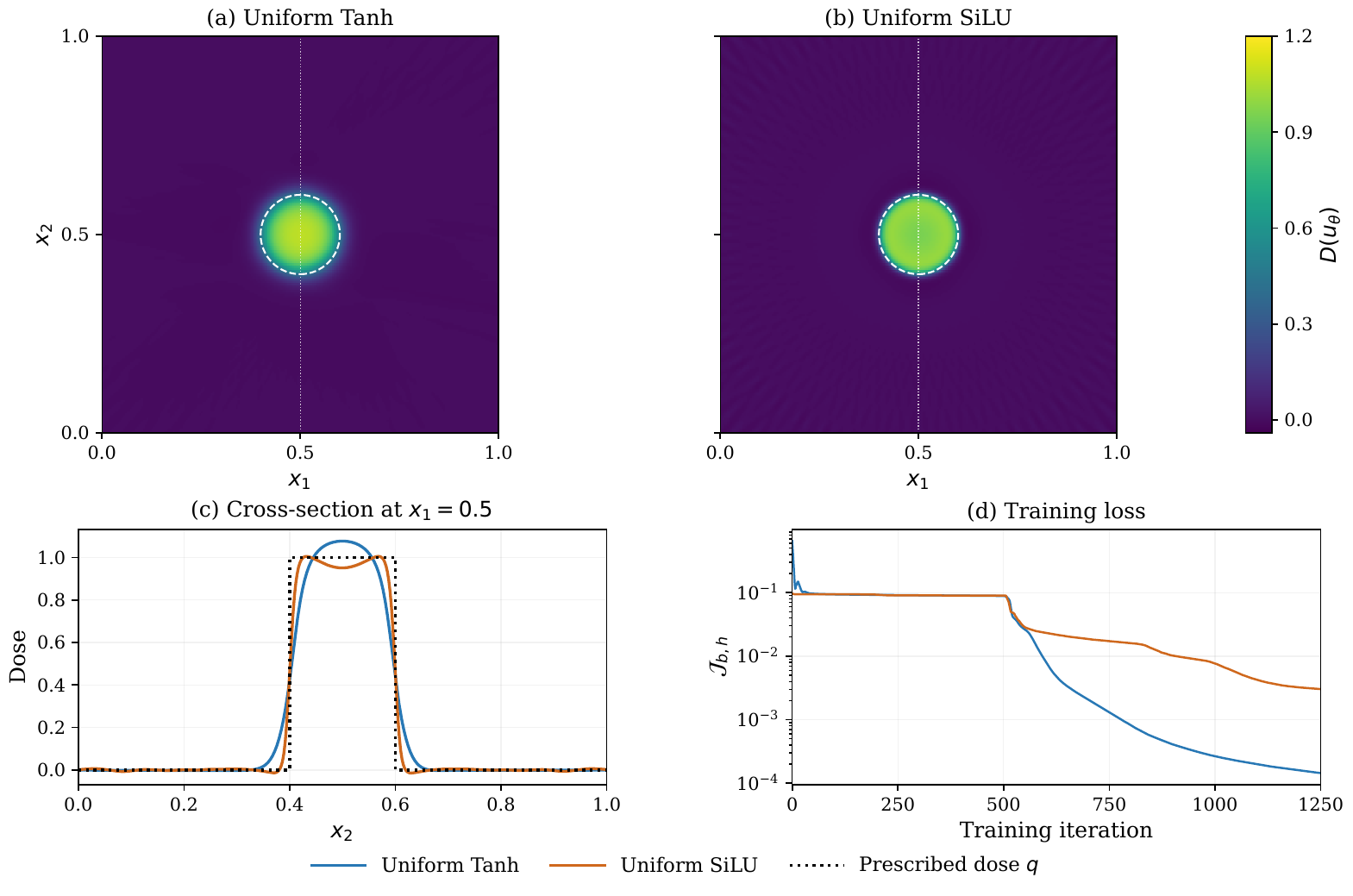}
\caption{\rev{Direct dose tracking of the circular target in the square.
(a)--(b) Doses for uniform Tanh and SiLU on a common colour scale.
Dashed circles mark the target, and dotted vertical lines mark $x_1=0.5$.
(c) Cross-sections and prescribed dose. (d) Training losses on the common
uniform quadrature. Dose maps and cross-sections use the same
$150$-direction evaluation rule.}}
\label{fig:circle-dose-uniform}
\end{figure}

\rev{Figure~\ref{fig:circle-dose-uniform} shows that SiLU gives a sharper
transition and a closer target fit than Tanh. Table~\ref{tab:circle-dose-uniform}
reports the target mismatch and residuals on the common evaluation rule
in Table~\ref{tab:experiment-settings}. Both dose profiles retain a
smooth transition across the target boundary.}

\begin{table}[htbp]
\centering
\caption{\rev{Circular dose comparison at the final checkpoints.
The relative target mismatch is
$e_q=\|D_h(u_\theta)-q\|_{L^2(\Omega)}/\|q\|_{L^2(\Omega)}$,
evaluated on the common separate rule.
The last two columns give $\mathcal J_b$ evaluated by the training
and common separate quadratures.}}
\label{tab:circle-dose-uniform}
{\color{black}\small
\begin{tabular}{lrrr}
\toprule
Network / rule & $e_q$ & Training residual & Evaluation residual\\
\midrule
Tanh / uniform & $0.31736$ & $1.447\,10^{-4}$ & $1.036\,10^{-3}$\\
SiLU / uniform & $0.20553$ & $3.034\,10^{-3}$ & $2.627\,10^{-3}$\\
\bottomrule
\end{tabular}}
\end{table}

}

\subsection{\rev{Example: Regular manufactured KKT solution in the unit disc and ball}}
\label{ex:regular-disc-revised}\label{ex:ball}
{\color{black}

\rev{We now consider a manufactured solution designed to satisfy the full KKT system while remaining regular near grazing directions. In the disc, we compare residuals with solution errors in $H$, $G_-$
and $U$,} while $\Omega=B_1(0)\subset\mathbb R^2$, $\mathcal O=I$ and $\sigma=\alpha=1$. For
$(x,\omega)\in\Omega\times S^1$, set
\[
d=x\cdot\omega,\qquad
\Delta=\sqrt{1-|x|^2+d^2},\qquad
\tau_+=\Delta-d,
\]
and define
\[
A(s)=e^{-1/s^2},\qquad
r(s)=-\frac{\alpha A(s)e^s}{s(1-e^{-2s})}
\quad(s>0),\qquad A(0)=r(0)=0.
\]
The exact solution and prescribed tracking field are
\begin{equation}\label{eq:regular-disc-revised-exact}
\begin{aligned}
u_{\rm ex}&=A(\Delta)e^{-d},&
z_{\rm ex}&=r(\Delta)(1-e^{-\tau_+}),\\
g_{\rm ex}&=A(\Delta)e^\Delta\quad\text{on }\Gamma^-,&
X_T&=u_{\rm ex}-r(\Delta).
\end{aligned}
\end{equation}
Indeed, $\omega\cdot\nabla d=1$,
$\omega\cdot\nabla\Delta=0$ and
$\omega\cdot\nabla\tau_+=-1$, so
\[
\omega\cdot\nabla u_{\rm ex}+u_{\rm ex}=0,\qquad
-\omega\cdot\nabla z_{\rm ex}+z_{\rm ex}=u_{\rm ex}-X_T.
\]
The traces satisfy $\gamma^-u_{\rm ex}=g_{\rm ex}$ and
$\gamma^+z_{\rm ex}=0$. On $\Gamma^-$, where $b=\Delta$ and
$\tau_+=2\Delta$, the optimality condition follows from
\[
\alpha g_{\rm ex}+b\gamma^-z_{\rm ex}
=\alpha A(\Delta)e^\Delta
+\Delta r(\Delta)(1-e^{-2\Delta})=0.
\]
The flatness of $A$ at zero removes the apparent grazing singularities.
Thus, \eqref{eq:regular-disc-revised-exact} solves the full field-tracking
KKT system with the unweighted control penalty in $U$.
\rev{The same characteristic construction applies in the unit ball in
three dimensions.}

The network used has $29\,635$ trainable parameters. Its computational
settings are listed in Table~\ref{tab:experiment-settings}. Final errors are reported in Table~\ref{tab:regular-disc-revised-errors}, and verify recovery of the coupled optimality system for this regular solution at $\alpha=1$. The dose $D(u)=\int_{S^1}u\,d\omega$ is a diagnostic of this field-tracking solution. Its reference is obtained by applying the same $150$-direction rule to $u_{\rm ex}$.

\begin{table}[!htbp]
\centering\small\color{black}
\begin{tabular}{lcc}
\toprule
Quantity and norm & \shortstack{$\|v_\theta-v_{\rm ref}\|_X$}
& \shortstack{$\dfrac{\|v_\theta-v_{\rm ref}\|_X}{\|v_{\rm ref}\|_X}$}\\
\midrule
State in $H$ & $1.5631\times10^{-3}$ & $9.5282\times10^{-4}$\\
Adjoint in $H$ & $1.2373\times10^{-3}$ & $5.1195\times10^{-4}$\\
Control in $G_-$ & $1.4724\times10^{-3}$ & $6.2718\times10^{-4}$\\
Control in $U$ & $2.5222\times10^{-3}$ & $1.0428\times10^{-3}$\\
Dose in $L^2(\Omega)$ & $8.5872\times10^{-4}$ & $2.5707\times10^{-4}$\\
\bottomrule
\end{tabular}
\caption{\rev{Regular manufactured disc benchmark at the final checkpoint.
All errors use the separate finer quadrature.
Here $v_\theta$ and $v_{\rm ref}$ are the approximate and reference
quantities in each row, and $X$ is the indicated norm.}}
\label{tab:regular-disc-revised-errors}
\end{table}

A comparison of the approximate fields, inflow control and dose with their exact counterparts is shown in Figure~\ref{fig:regular-disc-revised-solution} . The state attenuates along each characteristic, while the negative adjoint vanishes at outflow. \rev{The control is recovered accurately in both boundary norms, and
the approximate dose closely follows its exact profile.}

\begin{figure}[!htbp]
\centering
\includegraphics[width=.98\linewidth]{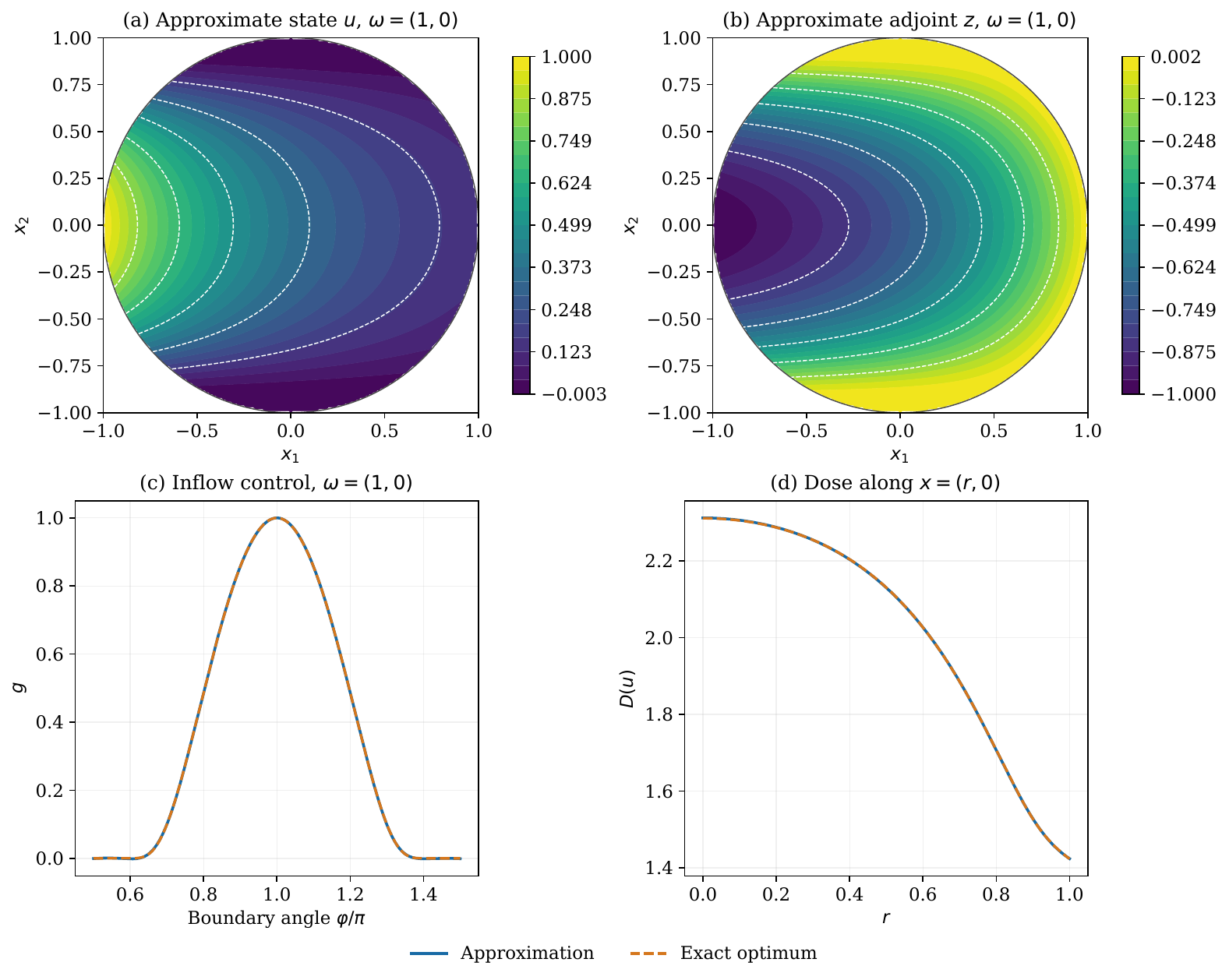}
\caption{Regular manufactured disc benchmark. The top row shows the approximate
state and adjoint at $\omega=(1,0)$, with exact level sets in white.
The bottom row compares the approximate and exact inflow control (left)
and dose along $x=(r,0)$ (right).}
\label{fig:regular-disc-revised-solution}
\end{figure}

For the residual comparison, define
\[
\begin{aligned}
E_b^2&=\|u_\theta-u_{\rm ex}\|_H^2
+\|z_\theta-z_{\rm ex}\|_H^2
+\|g_\theta-g_{\rm ex}\|_{G_-}^2,\\
E_U^2&=\|u_\theta-u_{\rm ex}\|_H^2
+\|z_\theta-z_{\rm ex}\|_H^2
+\|g_\theta-g_{\rm ex}\|_U^2.
\end{aligned}
\]
We evaluate these errors and $\mathcal J_{b,h}$ and
$\mathcal J_{U,h}$ with the finer rule at nine stages of the training. Figure~\ref{fig:regular-disc-revised-residual}(b) shows that the squared errors decrease together with their corresponding
residuals. The observed ratios satisfy
\[
0.3993 \le \frac{E_b^2}{\mathcal J_{b,h}} \le 0.9992,
\qquad
0.6583 \le \frac{E_U^2}{\mathcal J_{U,h}} \le 1.0342.
\]
These observations support the residual stability estimate for this benchmark without identifying its global reliability constant. \rev{For $\alpha=1$ and $L=2$, the directional choice in
Table~\ref{tab:stability-constants} gives $C_{\rm stab}=712$. Compared
with the observed final ratio $E_b^2/\mathcal J_{b,h}=0.5833$, this indicates a rather conservative global bound.}

\begin{figure}[!htbp]
\centering
\includegraphics[width=.98\linewidth]{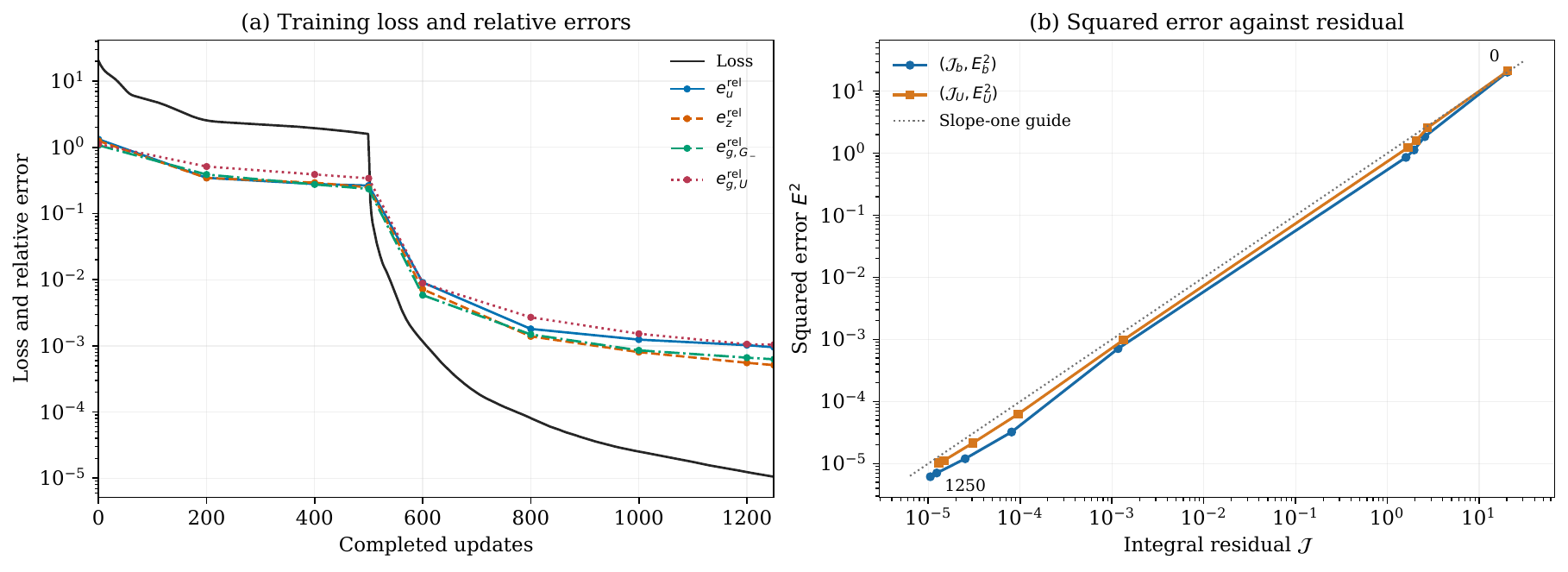}
\caption{\rev{Regular manufactured disc benchmark. Left: training loss
and relative state, adjoint and control errors, with the errors evaluated
on the finer rule at the saved checkpoints. Right: combined squared
absolute errors against their corresponding integral residuals,
evaluated on the finer rule. The dotted line is a slope-one guide.}}
\label{fig:regular-disc-revised-residual}
\end{figure}

\subsubsection*{\rev{Three-dimensional KKT solution.}}
\rev{The same construction is applied in $\Omega=B_1(0)\subset\mathbb R^3$,
with $\omega\in S^2$ and $\sigma=\alpha=1$. The exact formulas
\eqref{eq:regular-disc-revised-exact} give the state, adjoint and inflow
control in this five-dimensional phase space.} \rev{Interior points and directions are sampled uniformly and fixed throughout training. Spatial samples are drawn separately for each
direction, whereas the boundary points are reused, as specified in
Table~\ref{tab:experiment-settings}.} \rev{The training loss and the relative errors over $2\,500$ recorded training steps are reported in Table~\ref{tab:ball-field-training}.}

\rev{The reported control error is in $G_-$. \rev{The final relative errors are of order $10^{-3}$ on the training samples. The quadrature reuses directions across spatial points and
therefore differs from the independent phase-space sampling in
Section~\ref{sec:mc-residual-experiment}.}}

\begin{table}[htbp]
\centering\small\color{black}
\begin{tabular}{ccccc}
\toprule
Recorded step & Loss
& $\dfrac{\|u_\theta-u_{\rm ex}\|_H}{\|u_{\rm ex}\|_H}$
& $\dfrac{\|z_\theta-z_{\rm ex}\|_H}{\|z_{\rm ex}\|_H}$
& $\dfrac{\|g_\theta-g_{\rm ex}\|_{G_-}}{\|g_{\rm ex}\|_{G_-}}$\\[4pt]
\midrule
$1$ & $2.2436$ & $2.5907$ & $1.2388$ & $1.3713$\\
$500$ & $2.1056\times10^{-1}$ & $4.5218\times10^{-1}$ & $3.8011\times10^{-1}$ & $4.8808\times10^{-1}$\\
$600$ & $5.9999\times10^{-3}$ & $5.0647\times10^{-2}$ & $5.1280\times10^{-2}$ & $4.3514\times10^{-2}$\\
$1\,000$ & $7.0479\times10^{-5}$ & $4.9340\times10^{-3}$ & $6.3377\times10^{-3}$ & $2.9540\times10^{-3}$\\
$1\,500$ & $7.4395\times10^{-6}$ & $2.3741\times10^{-3}$ & $1.5970\times10^{-3}$ & $1.4301\times10^{-3}$\\
$1\,826$ & $3.9417\times10^{-6}$ & $1.6717\times10^{-3}$ & $1.1925\times10^{-3}$ & $1.2627\times10^{-3}$\\
$2\,500$ & $3.9417\times10^{-6}$ & $1.6717\times10^{-3}$ & $1.1925\times10^{-3}$ & $1.2627\times10^{-3}$\\
\bottomrule
\end{tabular}
\caption{\rev{Training loss and relative errors evaluated on the training samples for the manufactured field-tracking solution in the unit ball.}}
\label{tab:ball-field-training}
\end{table}

}

\subsection{\rev{Example: Manufactured-solution verification of three-dimensional dose control}}
\label{ex:ball-dose}
{
\color{black}
We consider $\Omega=B_1(0)\subset\mathbb R^3$ and $\omega\in S^2$, with
the unnormalised dose $D(u)(x)=\int_{S^2}u(x,\omega)\,d\omega$.
The objective is
\[
 F_{\rm dose}(u,g)=\frac12\|D(u)-q\|_{L^2(\Omega)}^2
                  +\frac\alpha2\|g\|_U^2,
 \qquad Tu=0,\quad\gamma^-u=g.
\]
We use a nonradial manufactured solution to check the residual--error relation for three-dimensional dose control, including the angular coupling in the adjoint equation.

For $\alpha,\sigma>0$, set
\[
 h(x)=1+0.3x_1+0.2x_2^2+0.1x_1x_3,
 \qquad q(x)=D(u^*)(x)+h(x).
\]
Write $d=x\cdot\omega$, $\Delta=(1-|x|^2+d^2)^{1/2}$,
$\tau_-=\Delta+d$ and $\tau_+=\Delta-d$.
For $(y,\omega)\in\Gamma^-$, let $\ell(y,\omega)=-2y\cdot\omega$ be the
characteristic length and $b(y,\omega)=-y\cdot\omega$. Define
\begin{equation}\label{eq:ball-dose-exact}
\begin{aligned}
 z^*(x,\omega)&=-\int_0^{\tau_+(x,\omega)}
            e^{-\sigma s}h(x+s\omega)\,ds,\\
 g^*(y,\omega)&=\frac{b(y,\omega)}\alpha
       \int_0^{\ell(y,\omega)}e^{-\sigma s}h(y+s\omega)\,ds,\\
 u^*(x,\omega)&=e^{-\sigma\tau_-(x,\omega)}
       g^*(x-\tau_-(x,\omega)\omega,\omega).
\end{aligned}
\end{equation}
The integrands are quadratic polynomials times an exponential, so the
state, adjoint and control are evaluated analytically. Differentiation
along characteristics gives
\[
 Tu^*=0,\qquad T^*z^*=-h=D(u^*)-q,
 \qquad\gamma^-u^*=g^*,\quad\gamma^+z^*=0,
 \quad\alpha g^*+b\gamma^-z^*=0.
\]
These bounded fields are well-defined, hence, by
Theorem~\ref{thm:corrected-wellposed}, give the unique optimum of the KKT system. The optimal target
mismatch is known explicitly,
\[
 \|D(u^*)-q\|_{L^2(\Omega)}=\|h\|_{L^2(\Omega)}
       =\sqrt{\frac{3856\pi}{2625}}\simeq2.148220.
\]

\rev{We take $\alpha=\sigma=1$. The network has $29\,763$ parameters,
with no imposed symmetry. The computational settings are listed
in Table~\ref{tab:experiment-settings}. The azimuthal nodes of the
finer rule are shifted relative to training.}

The reference dose is evaluated independently of the training
directions. Exact azimuthal averaging of the polynomial reduces
its angular integral to one dimension, where an order-doubling
check changes the reference values by less than $8\times10^{-15}$
on the finer spatial rule. On this rule, angular integration of
the exact state differs from the reference dose by
$2.534\times10^{-6}$ in $L^2(\Omega)$.

Table~\ref{tab:ball-dose-errors} reports errors on the finer rule.
The dose error includes angular integration of the approximate
state and compares it with $D(u^*)$, rather than with $q$.
The final residuals are $\mathcal J_{b,h}=5.2662\times10^{-4}$ on
the training rule and $5.2701\times10^{-4}$ on the finer rule.
Replacing the stationarity residual norm by $U$ gives
$\mathcal J_{U,h}=5.8803\times10^{-4}$ on the latter.

\begin{table}[htbp]
\centering
\caption{\rev{Manufactured three-dimensional dose control. Final errors
on the separate finer quadrature. Here $v_\theta$ and $v_{\rm ref}$ are the approximate and reference
quantities in each row, and $X$ is the indicated norm.}}
\label{tab:ball-dose-errors}
\rev{\begin{tabular}{lccc}
\toprule
Quantity & Norm & \shortstack{$\|v_\theta-v_{\rm ref}\|_X$}
& \shortstack{$\dfrac{\|v_\theta-v_{\rm ref}\|_X}{\|v_{\rm ref}\|_X}$}\\
\midrule
$u_\theta-u^*$ & $H$ & $6.962\times10^{-3}$ & $2.753\times10^{-3}$\\
$z_\theta-z^*$ & $H$ & $1.203\times10^{-2}$ & $3.024\times10^{-3}$\\
$g_\theta-g^*$ & $U$ & $1.533\times10^{-2}$ & $3.736\times10^{-3}$\\
$g_\theta-g^*$ & $G_-$ & $8.777\times10^{-3}$ & $2.399\times10^{-3}$\\
$D_h(u_\theta)-D(u^*)$ & $L^2(\Omega)$ & $4.440\times10^{-3}$ & $5.625\times10^{-4}$\\
\bottomrule
\end{tabular}}
\end{table}

Figure~\ref{fig:ball-dose-fields} compares the approximate and exact
state and adjoint on a central section for
$\omega=(1,2,3)/\sqrt{14}$. Figure~\ref{fig:ball-dose-profiles}
shows the corresponding dose profiles along the coordinate axes.
The computed profiles closely follow the exact optimal dose.
Their separation from the prescribed dose is the mismatch $h$,
which is present at the exact optimum.

\begin{figure}[htbp]
\centering
\includegraphics[width=.98\linewidth]{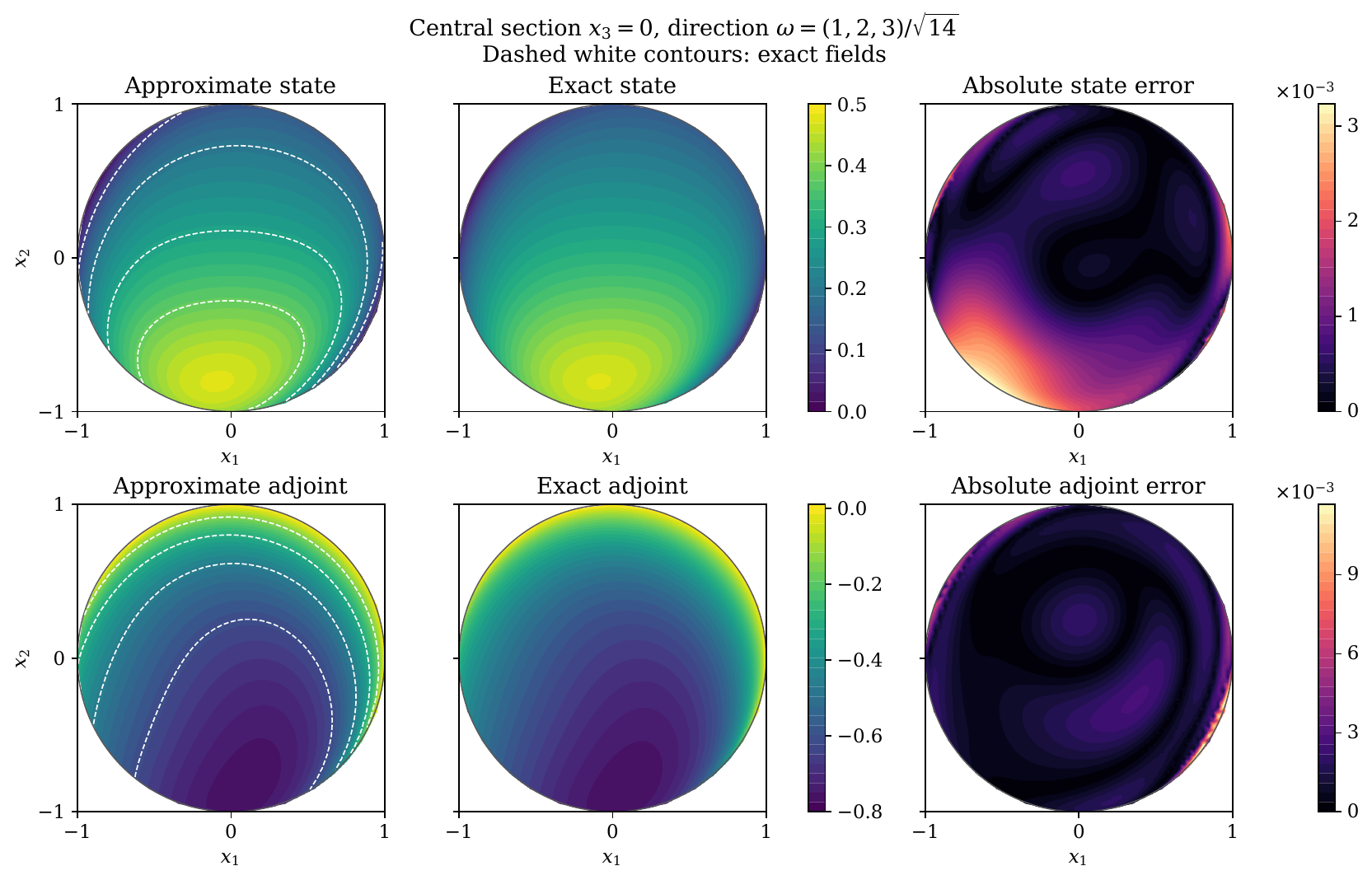}
\caption{\rev{Manufactured three-dimensional dose control. Approximate
and exact state (top) and adjoint (bottom) on $x_3=0$, for
$\omega=(1,2,3)/\sqrt{14}$, with their absolute errors. Approximate
and exact fields share the same colour scale in each row.
\rev{Dashed white curves show contours of the exact fields on the
approximate panels. The contours are interpolated linearly between
the saved section nodes.}}}
\label{fig:ball-dose-fields}
\end{figure}

\begin{figure}[htbp]
\centering
\includegraphics[width=.98\linewidth]{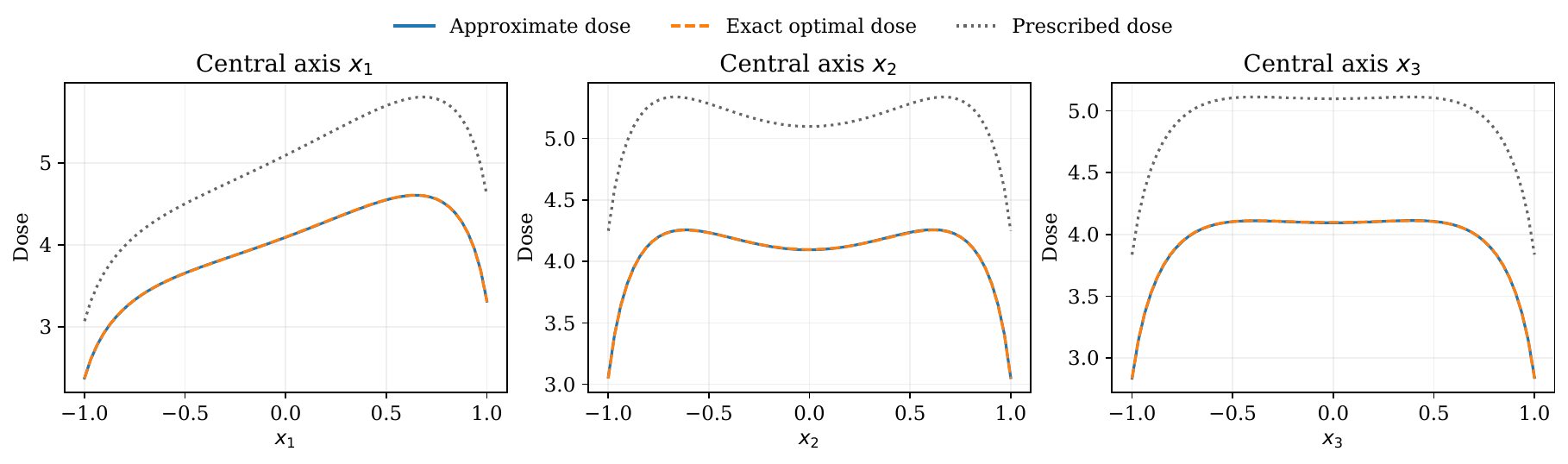}
\caption{\rev{Manufactured three-dimensional dose control. Approximate
and exact optimal doses along the three coordinate axes, together
with the prescribed dose $q=D(u^*)+h$.}}
\label{fig:ball-dose-profiles}
\end{figure}
\rev{Let $E_b^2$ and $E_U^2$ be the sums of the squared state,
adjoint and control errors, using $G_-$ and $U$ for the control,
respectively. During the saved L-BFGS stages, both errors decrease
with their corresponding residuals, and the observed ratios satisfy
\[
 0.4995\le\frac{E_b^2}{\mathcal J_{b,h}}\le0.7794,
 \qquad
 0.7281\le\frac{E_U^2}{\mathcal J_{U,h}}\le1.0850.
\]
The final ratios are $0.5126$ and $0.7281$, respectively.
Thus three-dimensional dose control exhibits the residual--error
behaviour observed in the regular disc benchmark.}

}

\subsection{\rev{Example: Parametric ellipsoidal dose targets in the unit cube}}
\label{ex:cube-sphere-dose}
{
\color{black}
As a further example, we use one neural network to approximate a family of dose-control problems with idealised fixed-volume ellipsoidal target shapes. This is an application that neural network approximation might be advantageous. In this case, we consider $\Omega=(0,1)^3$, with attenuation $\sigma=0.1$ and regularisation $\alpha=10^{-4}$. Let $c=(0.5,0.5,0.5)$, $r=0.2$ and $n=(1,2,3)/\sqrt{14}$. For $y=x-c$, define
\[
 T_\mu=\left\{x\in\Omega:
 e^{-\mu}(y\cdot n)^2+
 e^{\mu/2}\left|y-(y\cdot n)n\right|^2<r^2\right\},
 \qquad q_\mu=\chi_{T_\mu}.
\]
The semiaxes are $r e^{\mu/2}$ along $n$ and $r e^{-\mu/4}$ in
 the two transverse directions. Hence, the target changes from flattened
 to elongated at fixed volume $|T_\mu|=4\pi r^3/3$, and $\mu=0$
 gives the spherical target.

We approximate the dose-control problems at $\mu\in\{-2,-1,0,1,2\}$ by one network
$(u_\theta,z_\theta,g_\theta)(x,\omega,\mu)$, using
$D(u)=\int_{S^2}u\,d\omega$. Training minimises the arithmetic mean
of the five discrete residual functionals. The same shape values are
used for independent spatial and angular evaluation. The computational
settings are given in Table~\ref{tab:experiment-settings}.

The spherical member of this parametric approximation and the joint training history are shown in Figure~\ref{fig:cube-sphere-dose-opaque}, while Figure~\ref{fig:parametric-ellipsoid-spatial} displays the dose profiles for all five shapes. The profiles are evaluated along the oblique axis $n$. Between updates $1\,400$ and $2\,000$, the target transition sharpens while the central dose increases above the prescribed level. Table~\ref{tab:parametric-ellipsoid-mismatch} reports the corresponding target mismatches. At the final update $2\,500$, the joint target mismatch divided by the known target norm is $0.3638$. As in Example~\ref{ex:circle-dose}, these are target mismatches rather than errors against an exact optimal dose. The final training and independent-rule residuals are $5.4667\times10^{-4}$ and $1.0393\times10^{-2}$, respectively.
The adjoint equation gives the largest component on the independent rule.

\begin{figure}[!htbp]
\centering
\includegraphics[width=.95\linewidth]{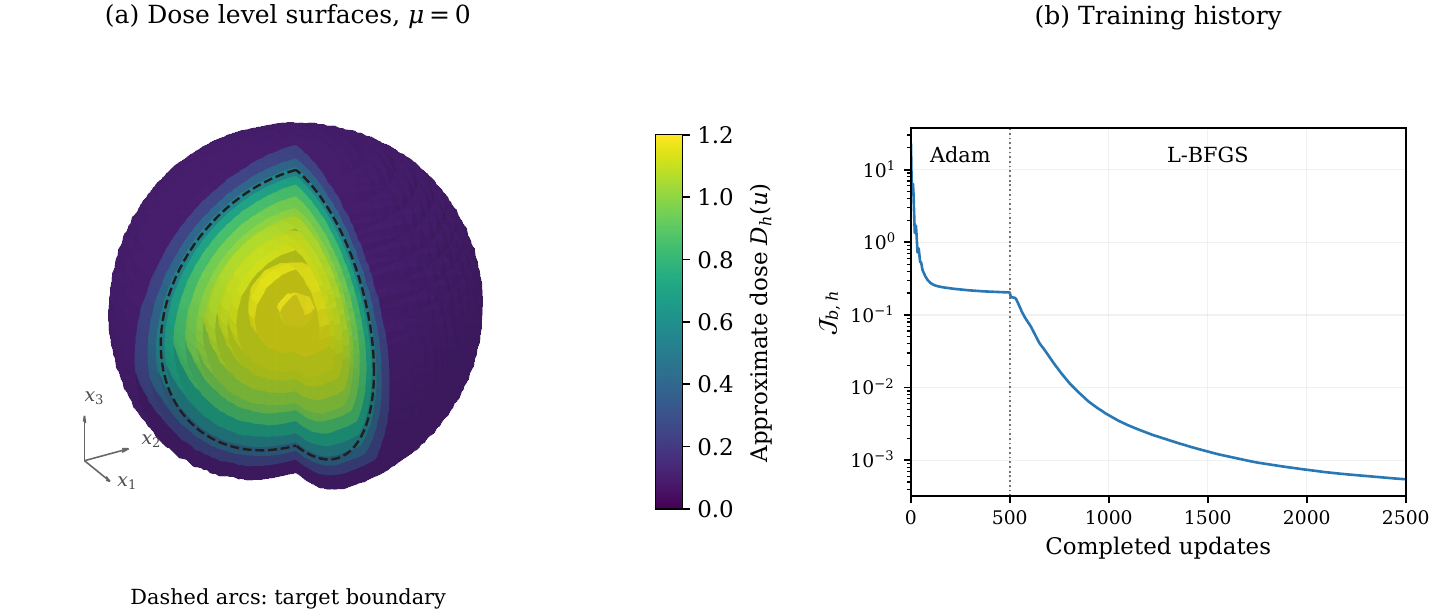}
\caption{\rev{Parametric ellipsoidal dose targets in the cube. (a) Dose level
surfaces for $\mu=0$, with a quarter removed. Dashed arcs mark the spherical
 target boundary on the exposed planes. The surfaces use direct evaluation
of the final parametric approximation with the $800$-direction rule.
(b) Joint training history for the five equally weighted shapes.}}
\label{fig:cube-sphere-dose-opaque}
\end{figure}

\rev{Figure~\ref{fig:parametric-ellipsoid-spatial} also shows the final
spatial dose sections and central profiles at updates
$1\,400$, $1\,600$ and $2\,000$ for the five shapes in the coordinates
$x=c+s n+t v$, where $v=(2,-1,0)/\sqrt{5}$.}
\begin{figure}[!htbp]
\centering
\includegraphics[width=.98\linewidth]{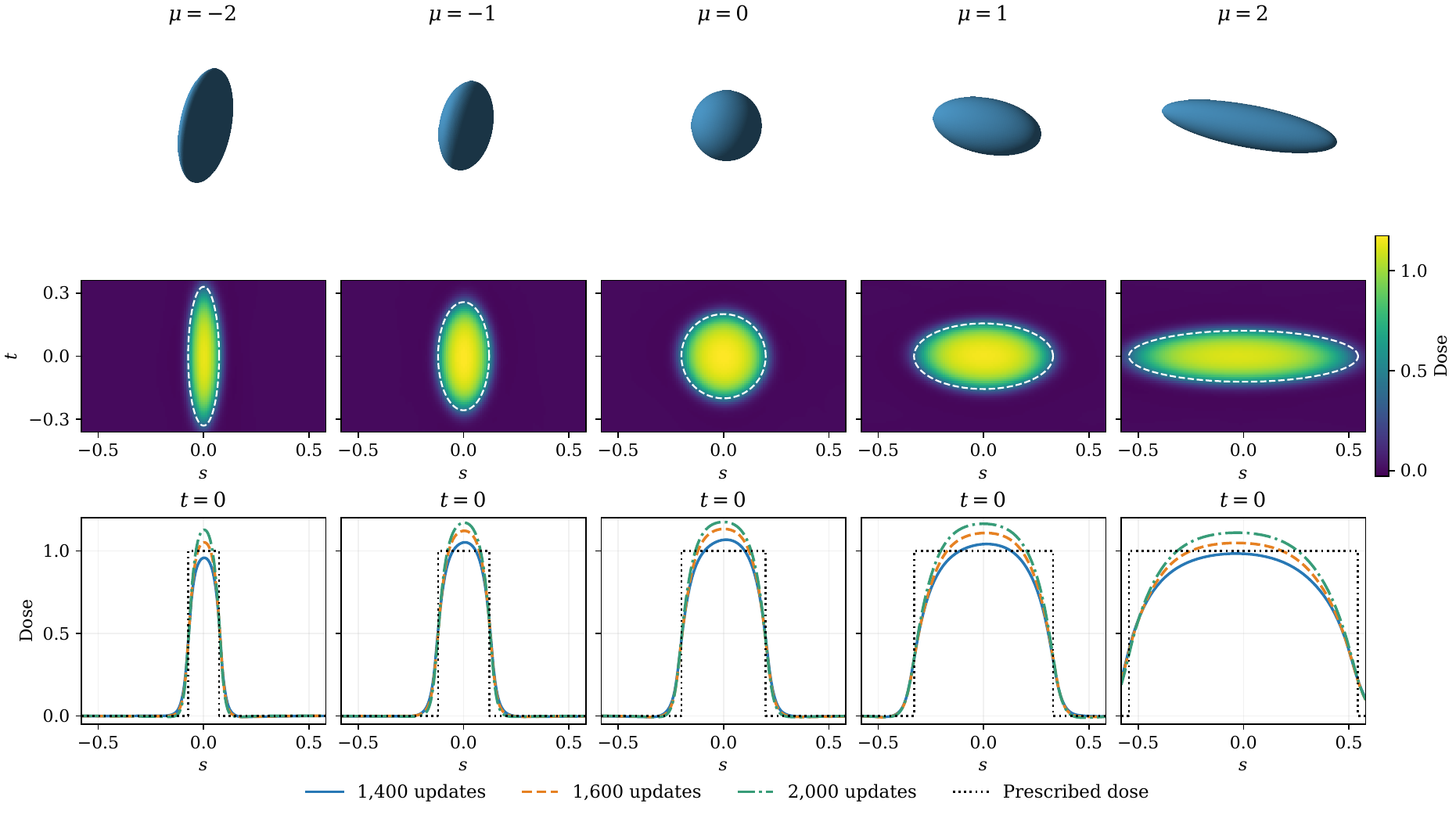}
\caption{\rev{Parametric ellipsoidal dose targets.
Top: the five target geometries in the fixed frame with first axis $n$.
Middle: final approximate dose at update $2\,500$ in the principal-axis
plane $x=c+s n+t v$,
with dashed target boundaries and a common colour scale.
Bottom: central dose profiles at $t=0$ and updates $1\,400$, $1\,600$
and $2\,000$, with the prescribed dose shown by dotted lines.
Each column uses the same parameter value throughout.
Update counts include $500$ Adam steps.}}
\label{fig:parametric-ellipsoid-spatial}
\end{figure}

\begin{table}[!htbp]
\centering
\caption{\rev{Parametric ellipsoidal dose targets. Target mismatches at update $2\,500$ on
independent spatial samples with $800$ directions. The denominator is
 the known target norm $\|q_\mu\|_{L^2(\Omega)}=\sqrt{4\pi r^3/3}$.}}
\label{tab:parametric-ellipsoid-mismatch}
{\color{black}\small
\renewcommand{\arraystretch}{1.18}
\begin{tabular}{@{}ccc@{}}
\toprule
$\mu$ & $\|D_h(u_\theta(\mu))-q_\mu\|_{L^2(\Omega)}$
& $\|D_h(u_\theta(\mu))-q_\mu\|_{L^2(\Omega)}/\|q_\mu\|_{L^2(\Omega)}$\\
\midrule
$-2$ & $0.06909$ & $0.3774$\\
$-1$ & $0.06763$ & $0.3695$\\
$0$ & $0.06558$ & $0.3582$\\
$1$ & $0.06422$ & $0.3508$\\
$2$ & $0.06637$ & $0.3626$\\
\bottomrule
\end{tabular}
}
\end{table}
}

\subsection{\rev{Example: Parametric bilobed dose targets}}
\label{ex:bilobed-dose-ongoing}
\rev{Here we use one neural network to approximate a family of dose-control problems with bilobed target shapes. These targets vary the separation of fixed-size lobes, introducing nonconvex geometries while remaining connected. 
In this case, we consider $\Omega=(0,1)^3$,
$\sigma=0.1$ and $\alpha=10^{-4}$. Let $c=(0.5,0.5,0.5)$,
$n=(1,2,3)/\sqrt{14}$, $v=(2,-1,0)/\sqrt{5}$ and
$F=(n,v,n\times v)$. The two ellipsoidal lobes have semiaxes
$a=(0.16,0.22,0.16)$ and centres
$c_\pm(\mu)=c\pm0.16(0.54+0.44\mu)n$. Define}
{\color{black}
\[
 \begin{aligned}
 \phi_\pm(x,\mu)&=\sum_{j=1}^3
 \left(\frac{[F^T(x-c_\pm(\mu))]_j}{a_j}\right)^2-1,
 \\
 T_\mu&=\left\{x\in\Omega:
 -\frac1{20}\log\!\left(e^{-20\phi_-}+e^{-20\phi_+}\right)<0\right\},
 \qquad q_\mu=\chi_{T_\mu}.
 \end{aligned}
\]
}
\rev{The separation increases with $\mu$, while the lobe sizes remain
fixed. The target volume varies with the separation. All targets remain connected and strictly inside the cube.
Figure~\ref{fig:bilobed-geometry-preview} shows the five shapes at
$\mu\in\{-0.5,0,0.5,0.75,1\}$. One network
$(u_\theta,z_\theta,g_\theta)(x,\omega,\mu)$ approximates the
corresponding dose-control solutions, using the settings in
Table~\ref{tab:experiment-settings}. Training minimises the arithmetic
mean of the five discrete residual functionals. We report the approximation
at update $2\,500$, after $500$ Adam and $2\,000$ L-BFGS updates.

The dose sections and profiles in Figure~\ref{fig:bilobed-geometry-preview}
follow the changing lobe separation. The boundary transitions remain
smooth, with an interior overshoot. For the most separated target,
the off-centre profile at $t=0.12$ resolves the central dip, although
a positive dose remains in the sectional gap. Figure~\ref{fig:bilobed-dose-cutaway}
shows a central cutaway for $\mu=-0.5$ and the joint training history.
The recorded training residual is  $7.6780\times10^{-4}$ and decreases by approximately $64\%$ between updates $1\,400$ and $2\,500$. Table~\ref{tab:bilobed-dose-mismatch} reports the target mismatches
on independent spatial samples with $800$ directions. The joint relative target mismatch is $0.3373$.}

\begin{figure}[!htbp]
\centering
\includegraphics[width=.98\linewidth]{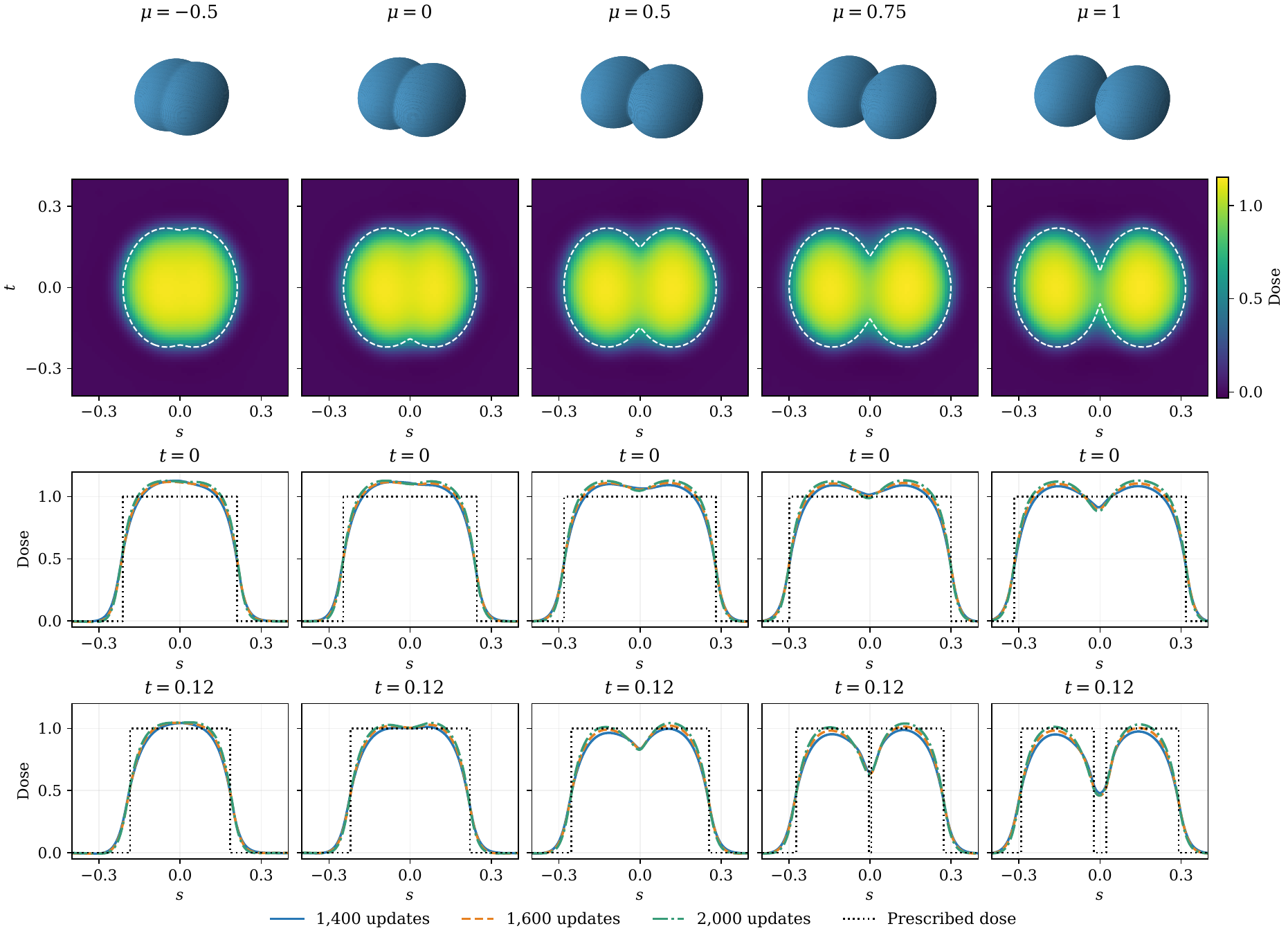}
\caption{\rev{Parametric bilobed dose targets.
Top: the three-dimensional target geometries.
Second row: final approximate doses at update $2\,500$ in the oblique
plane $x=c+s n+t v$, with dashed target boundaries and a common colour scale.
Below: dose profiles at $t=0$ and $t=0.12$ at updates $1\,400$,
$1\,600$ and $2\,000$, with the prescribed dose shown by dotted lines.
Each column uses the same parameter value throughout. Doses are evaluated
with $800$ directions. Update counts include $500$ Adam steps.}}
\label{fig:bilobed-geometry-preview}
\end{figure}

\begin{figure}[!htbp]
\centering
\includegraphics[width=.95\linewidth]{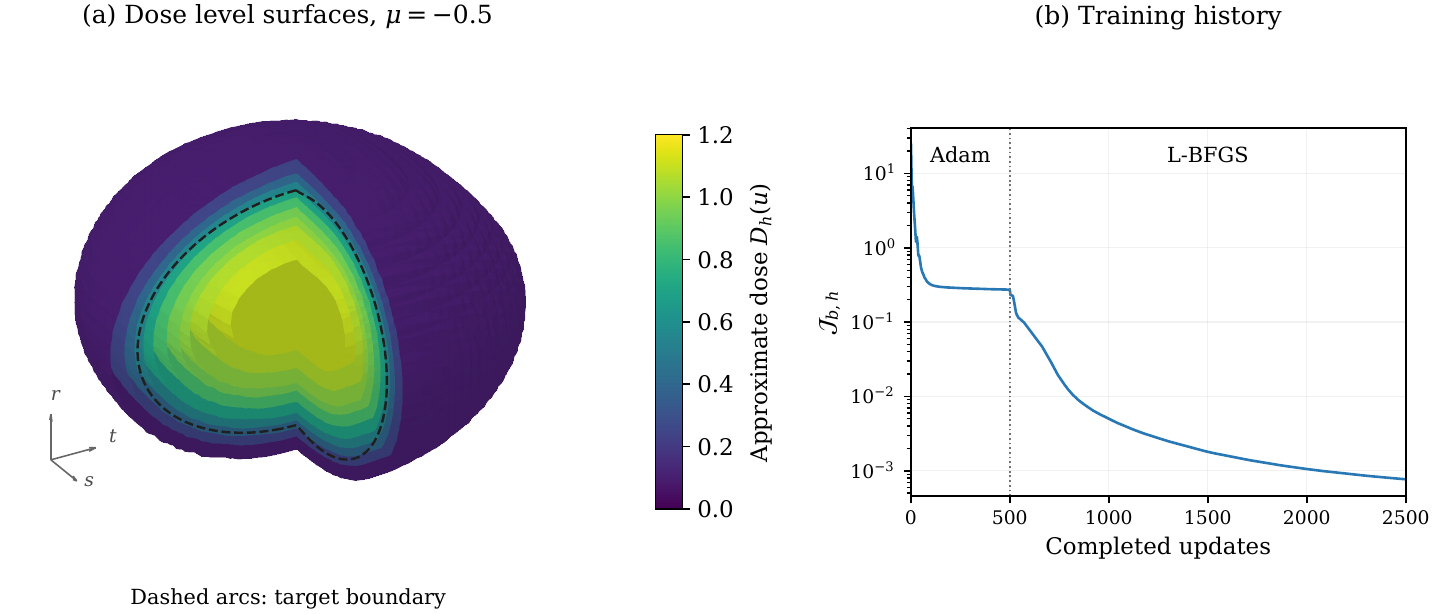}
\caption{\rev{Bilobed dose target for $\mu=-0.5$ at update $2\,500$.
(a) Dose level surfaces with a quarter removed through the target centre
in the oblique frame. Dashed arcs mark the target boundary on the
exposed planes. The surfaces use direct evaluation with $800$ directions.
(b) Recorded joint training history for the five equally weighted shapes.}}
\label{fig:bilobed-dose-cutaway}
\end{figure}

\begin{table}[!htbp]
\centering
\caption{\rev{Bilobed dose targets. Target mismatches at update $2\,500$
on independent spatial samples with $800$ directions. The denominator
uses the accurately evaluated target norm
$\|q_\mu\|_{L^2(\Omega)}=|T_\mu|^{1/2}$.}}
\label{tab:bilobed-dose-mismatch}
{\color{black}\small
\renewcommand{\arraystretch}{1.18}
\begin{tabular}{@{}ccc@{}}
\toprule
$\mu$ & $\|D_h(u_\theta(\mu))-q_\mu\|_{L^2(\Omega)}$
& $\|D_h(u_\theta(\mu))-q_\mu\|_{L^2(\Omega)}/\|q_\mu\|_{L^2(\Omega)}$\\
\midrule
$-0.5$ & $0.06363$ & $0.3421$\\
$0$ & $0.07224$ & $0.3573$\\
$0.5$ & $0.07018$ & $0.3296$\\
$0.75$ & $0.07082$ & $0.3280$\\
$1$ & $0.07220$ & $0.3324$\\
\bottomrule
\end{tabular}
}
\end{table}

\section{Conclusions}\label{sec:conclusions}
{\color{black}

We have established global residual stability for the KKT system
of transport boundary control. The continuum residual bounds the state
and adjoint errors in the volume norm and the control error in the
flux-weighted boundary norm. The proof reconciles this transport
measure with the unweighted control penalty and requires no positive
lower bound on $|\omega\cdot n|$. This stability result, followed by
the approximation, quadrature and optimisation error analysis, is the
main contribution of the paper. Joint neural network approximation of the
KKT variables provides its numerical realisation. The estimate holds
for fixed $\alpha>0$ and does not assert uniform stability as
$\alpha\to0$.

The distinction between the two boundary measures also determines
what is certified. Strengthening the optimality residual alone gives
an estimate in the unweighted control norm and hence a bound on
reduced-objective suboptimality. Weighted residual convergence by
itself does not imply these stronger conclusions.
\rev{The disc benchmark gives an unweighted relative control
error of $1.043\times10^{-3}$ on the separate finer quadrature.}

At the discrete level, the loss is evaluated by quadrature.
For Monte Carlo sampling, we have related the empirical residual to
the continuum functional through root-mean-square and
high-probability estimates. The fixed-field quadrature error has the
$M^{-1/2}$ rate under the stated moment assumptions. For fields
selected during training, uniform estimates over the neural network class
provide the required sampling control. Combining this discrepancy
with residual stability yields a probabilistic solution-error bound.
Combining it with approximation and optimisation gives the convergence
result. Deterministic quadrature and independent validation fit the
same residual-error argument. The sampling rate concerns the loss.
It does not by itself assert the same rate for the solution norm.

The numerical results show how these distinctions enter in
practice. The manufactured examples yield recorded relative errors
of order $10^{-3}$ in two dimensions and three dimensions. 
For the circular and triangular targets, the
characteristic reference reveals larger errors in the directional
field, while its angular integral is recovered more accurately.
These comparisons assess approximation of the optimal field.
They do not identify its angular integral with the prescribed target.
The fixed-field Monte Carlo study isolates quadrature error and
exhibits the expected root-mean-square rate.
\rev{The manufactured dose-tracking examples give relative
dose errors of $2.042\times10^{-4}$ in the disc and
$5.625\times10^{-4}$ in the ball against their exact optima,
independently of the nonzero target mismatch. The regular disc and
manufactured ball-dose examples also compare exact solution errors
with residuals at saved training stages. The parametric cube examples illustrate joint approximation of
dose-control solutions for ellipsoidal and bilobed target families,
using one network per family at five prescribed shape-parameter values.
These experiments assess approximation at the prescribed parameter
values rather than interpolation to unseen shapes.}

The explicit reliability constants expose another limitation: they
may be large at the small regularisation parameters used in target
tracking. For angular-flux tracking the characteristic structure
improves the bound, but an $\alpha^{-2}$ dependence is unavoidable
for the squared-error constant of the unscaled residual. Comparing
loss scalings at a fixed geometry, target and approximation class
would help assess their effect on training. The present computations
use different targets and configurations and do not constitute a
controlled numerical study of sensitivity to $\alpha$.

A direct next step towards treatment planning is to
\rev{extend the dose-tracking formulation to} nonnegative inflow
constraints. The latter leads to a variational inequality and requires
a corresponding change to the residual analysis. Scattering introduces
further angular coupling, while energy dependence is needed for more
complete transport models. For the present model, the characteristic
reference gives a useful benchmark for studying refinement, optimiser
accuracy and computational cost. The analysis developed here supplies
the stability estimate against which such extensions and numerical
comparisons can be assessed.
}

\subsubsection*{Acknowledgement} The authors acknowledge support by the Hellenic Foundation for Research and Innovation (H.F.R.I.) under the \enquote{2nd Call for H.F.R.I.~Research Projects to support Post-Doctoral Researchers} (project number: $01247$).

\subsubsection*{Data and code availability}
The source code and data supporting the numerical results will be made
publicly available upon publication of this article.

\appendix
\new{\section{Further estimates for boundary residuals}\label{app:boundary-stability}

\subsection{Explicit stability constants}
Using the residual norms from the proof of Theorem~\ref{thm:corrected-coercivity}, put $q=\|\mathcal O\|^2$ and
$R=(R_1,\ldots,R_5)^\top$. Define the row vectors
\begin{equation}\label{eq:stability-rows}
\begin{aligned}
 a_g&=\kappa(qL^{3/2},\sqrt L,1,qL,1),\\
 a_u&=(L,0,0,\sqrt L,0)+\sqrt L\,a_g,\\
 a_z&=(0,L,0,0,\sqrt L)+Lq\,a_u,
 \qquad \mathsf M=\begin{pmatrix}a_u\\a_z\\a_g\end{pmatrix}.
\end{aligned}
\end{equation}
The transport estimates in that proof give the componentwise inequality
\[
 (\|e_u\|_H,\|e_z\|_H,\|e_g\|_{G_-})^\top\le\mathsf M R.
\]
Consequently, with $\|\cdot\|_{\rm F}$ the Frobenius norm,
\begin{equation}\label{eq:explicit-stability-constant}
 C_{\rm stab}:=2\|\mathsf M\|_{\rm F}^2
 =2(\|a_u\|_2^2+\|a_z\|_2^2+\|a_g\|_2^2)
\end{equation}
is an admissible constant for \eqref{eq:corrected-coercivity}. The factor two accounts for the factor
$1/2$ in $\mathcal J_b$. The more detailed bound
$\|\mathsf M R\|_2$ also gives a reliable estimator from the individual
continuum residual norms.

For Corollary~\ref{cor:unweighted-control}, use the same matrix
with $\kappa=\alpha^{-1}$ and denote it by $\mathsf M_U$. Its proof
gives the componentwise estimate with control error in $U$ and
$R_U=(\|r_1\|_H,\|r_2\|_H,\|r_3\|_U,
\|r_4\|_{G_-},\|r_5\|_{G_+})^\top$. Hence,
\begin{equation}\label{eq:U-stability-constant}
 C_U=2\|\mathsf M_U\|_{\rm F}^2
\end{equation}
is sufficient for every bounded observation operator. Its growth is
of order $\alpha^{-2}$ for fixed $L$ and $\|\mathcal O\|$.
For $\mathcal O=I$, its values are given in the columns of the characteristic bound in
Table~\ref{tab:stability-constants}, applied now to $\mathcal J_U$ and the error in $U$.

\subsection{Dependence on the regularisation parameter}\label{rem:raywise-constant}
For $\mathcal O=I$, the constant can be sharpened. At an inflow
point $(y,\omega)$, let
\[
 m(y,\omega)=\int_0^{\ell(y,\omega)}
 \exp\left(-2\int_0^t\sigma(y+s\omega)\,ds\right)dt,
 \qquad 0\le m\le L.
\]
The characteristic change of variables gives
$(Sh,Sk)_H=(mh,k)_{G_-}$ for $h,k\in G_-$. Hence $A=S^*S$
is multiplication by $m$ and commutes with $P$. It follows that
\begin{equation}\label{eq:raywise-inverse}
 (\alpha I+PA)^{-1}h=\frac{h}{\alpha+bm},\qquad
 \|(\alpha I+PA)^{-1}\|\le\alpha^{-1}.
\end{equation}
Thus one may take $\kappa=\alpha^{-1}$ in
\eqref{eq:stability-rows} for all the angular-flux examples in Section~\ref{sec:numerics}.
For dose tracking, angular integration couples the directions and
the general inverse bound applies.

For angular-flux tracking, deterioration of the unscaled reliability constant as $\alpha\to0$ is unavoidable. Write $d\mu=dS_x\,d\omega$ and
$A_\varepsilon=\{(x,\omega)\in\Gamma^-:0<b<\varepsilon\}$.
These sets have positive measure in the present geometries. Choose
$h_\varepsilon=\mathbf1_{A_\varepsilon}/
\|\mathbf1_{A_\varepsilon}\|_{G_-}$, and set
$e_g=h_\varepsilon$, $e_u=Sh_\varepsilon$, with
$T^*e_z=e_u$ and $\gamma^+e_z=0$.
Only $r_3=(\alpha+bm)h_\varepsilon$ remains nonzero. Since
$\|e_g\|_{G_-}=1$,
\[
 \mathcal J_b\le\tfrac12(\alpha+\varepsilon L)^2,
 \qquad
 \|e_u\|_H^2+\|e_z\|_H^2+\|e_g\|_{G_-}^2\ge1.
\]
Letting $\varepsilon\to0$ shows that every global reliability
constant for this unscaled residual satisfies
$C_{\rm stab}\ge2/\alpha^2$. For fixed $L$, the sharpened
choice \eqref{eq:explicit-stability-constant} has this same order in
$\alpha$, although its numerical value need not be sharp.

\begin{table}[htbp]
\centering
\begin{tabular}{>{}c>{}c>{}c>{}c>{}c}
\toprule
&\multicolumn{2}{c}{\new{Square: $L=\sqrt2$}}
&\multicolumn{2}{c}{\new{Disc or ball: $L=2$}}\\
$\alpha$ & General bound & Characteristic bound & General bound & Characteristic bound\\
\midrule
$1$ & $7.176\times10^2$ & $1.910\times10^2$ & $4.104\times10^3$ & $7.120\times10^2$\\
$10^{-3}$ & $1.731\times10^{14}$ & $8.650\times10^7$ & $1.409\times10^{15}$ & $3.523\times10^8$\\
$10^{-4}$ & $1.729\times10^{18}$ & $8.643\times10^9$ & $1.408\times10^{19}$ & $3.520\times10^{10}$\\
\bottomrule
\end{tabular}
\caption{\new{Rounded values of the explicit choices of $C_{\rm stab}$ from
\eqref{eq:explicit-stability-constant} for $\mathcal O=I$.
The general column uses $\kappa=\alpha^{-1}(1+L/\alpha)$.
The characteristic bound uses $\kappa=\alpha^{-1}$. These are bounds for
the continuum integral residual and the flux-weighted control error,
not fitted constants or observed errors.}}
\label{tab:stability-constants}
\end{table}

\subsection{Residual weights and reliability}
If $\Lambda=\operatorname{diag}(\lambda_1,\ldots,\lambda_5)$
with $\lambda_i>0$ and $\mathcal J_\lambda=\frac12\sum_i\lambda_iR_i^2$, then
\[
 \|e_u\|_H^2+\|e_z\|_H^2+\|e_g\|_{G_-}^2
 \le 2\|\mathsf M\Lambda^{-1/2}\|_2^2\mathcal J_\lambda.
\]
For instance, replacing $r_3$ by $r_3/\alpha$ corresponds to
$\lambda_3=\alpha^{-2}$. This preserves the zero-residual solution
but changes both the loss and the reliability constant. A smaller
constant after rescaling does not by itself give a better error bound
or establish improved conditioning of the neural network optimisation.

For $\mathcal J_b$, the constant in Section~\ref{sec:quadrature} may be chosen as
\eqref{eq:explicit-stability-constant}, with the sharper inverse
bound of Appendix~\ref{rem:raywise-constant} for angular-flux tracking.
Table~\ref{tab:stability-constants} shows that this reliable bound
can be conservative for small $\alpha$. The observed ratio
$\|V-V^*\|_E^2/\mathcal J_b(V)$ does not determine a global constant.
For an estimator $\eta$ of $\|V-V^*\|_E$, the effectivity index is
$\eta/\|V-V^*\|_E$. The squared-error/residual ratio is a different
quantity. A reverse bound $\mathcal J_b(V)\le C\|V-V^*\|_E^2$
does not hold in general, since the residual contains directional
derivatives that the volume error norm does not control.

\subsection{Sampling the unweighted optimality residual}\label{app:U-sampling}
For $\mathcal J_U$, the measure-based Monte Carlo argument of Section~\ref{subsec:mc-quadrature}
applies with $\mu_3=dS_x\,d\omega$, leaving the other four measures
unchanged. Write $m_-^0=\int_{\Gamma^-}dS_x\,d\omega$.
For uniform samples $Y_j$ with law $(m_-^0)^{-1}dS_x\,d\omega$,
the optimality term is approximated by
\[
 \frac{m_-^0}{2M_3}\sum_{j=1}^{M_3}|r_3(V,Y_j)|^2.
\]
The same samples may approximate the inflow term by including
$b(Y_j)$ in its summand. Those estimates do not require
independence between different residual terms. If instead the
optimality term is sampled from $m_-^{-1}b\,dS_x\,d\omega$,
its sampled density is $m_-|r_3|^2/(2b)$. Even for bounded
$r_3$, the second moment of this density can diverge near grazing
directions. Finite-variance and bounded-density hypotheses must
therefore be verified for the chosen measure. Under the corresponding
moment and uniform-class hypotheses, the quadrature proofs in Section~\ref{subsec:mc-quadrature} apply to $\mathcal J_U$, and \eqref{eq:U-discrete-certificate}
replaces the weighted error bound.}

\section{\rev{Supplementary numerical experiments}}
\label{app:supplementary-numerics}

\rev{The supplementary experiments provide an additional manufactured dose
benchmark, target-control comparisons and a fixed-field test of Monte
Carlo quadrature. Their computational settings are included in
Table~\ref{tab:experiment-settings}.}

\subsection{\rev{
\rev{Triangular field target}}}\label{ex:triangle}

Let $\Omega=(0,1)^2$ and let $T$ be the triangle with vertices
$(0.75,0.75),\allowbreak (0.6,0.25),\allowbreak (0.15,0.5)$. We take $\sigma=0.1$ and $\alpha=10^{-4}$ and minimise
\[
F_{\mathrm{field}}(u,g)
=\frac12\|u-\chi_T\|_{L^2(\Omega\times S^1)}^2
+\frac{\alpha}{2}\|g\|_{L^2(\Gamma^-,\,dS_x\,d\omega)}^2
\]
subject to $\omega\cdot\nabla u+\sigma u=0$ and $\gamma^-u=g$.
The adjoint satisfies
\[
-\omega\cdot\nabla z+\sigma z=u-\chi_T,
\qquad \gamma^+z=0,
\qquad \alpha g+b\gamma^-z=0,
\]
where $b=|\omega\cdot n|$. The prescribed field is unscaled and
the control penalty uses $dS_x\,d\omega$. We report
$D(u)(x)=\int_{S^1}u(x,\omega)\,d\omega$ as an additional diagnostic.
The exact state and control follow from
\eqref{eq:rev-characteristic-control}, using analytic intersections
of each ray with the three target edges.

\begin{figure}[!htbp]
\centering
\includegraphics[width=.98\linewidth]{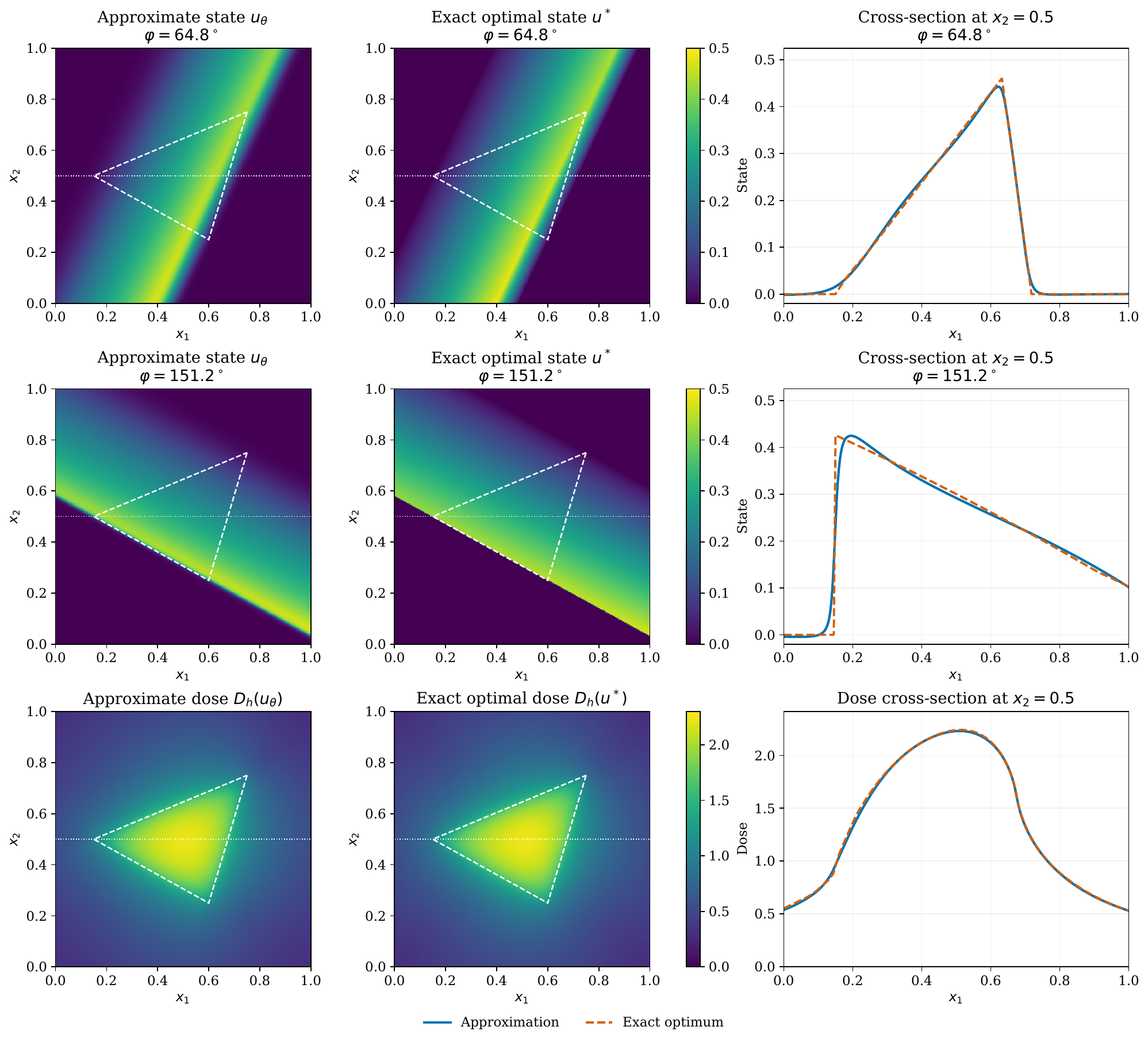}
\caption{\rev{Triangular field target. Approximate and exact optimal
states at $64.8^\circ$ and $151.2^\circ$ occupy the first two rows.
Their angular integrals appear in the third row. The right column
gives cross-sections at $x_2=0.5$. Dashed triangles mark $\partial T$.
Both doses use the same $150$-direction Gauss rule and each pair
of maps has a common colour scale.}}
\label{fig:triangle-field-optimum}
\end{figure}

\rev{The directional profiles in Figure~\ref{fig:triangle-field-optimum}
reflect the direction-dependent projections and chord lengths of the
triangular target. By \eqref{eq:rev-characteristic-control}, these
quantities determine the support and magnitude of the optimal inflow,
so changing the incidence direction produces a corresponding change
in the transported field. Changes of chord length at the target
vertices account for the changes of slope in the transverse profiles.
The angular integral retains the triangular geometry with the
smoothing caused by transport along complete characteristics.
The training loss decreases to $\mathcal J_{b,h}=1.499\times10^{-4}$.}

On the separate evaluation rule,
the relative state error in $L^2(\Omega\times S^1)$ is
$3.281\times10^{-2}$ and the relative control error in
$U=L^2(\Gamma^-,\,dS_x\,d\omega)$ is $3.601\times10^{-2}$.
The relative error between the two angular integrals, evaluated
with the same angular rule, is $8.581\times10^{-3}$.
Their approximate and reference maxima on the evaluation nodes
are $2.2323$ and $2.2429$. The dose peak above unity is
consistent with the optimum of the prescribed field objective.

\begin{table}[htbp]
\centering
\caption{\rev{Field targets at $\alpha=10^{-4}$. Relative errors
against the characteristic optimum on the separate evaluation rule.
The control norm uses $dS_x\,d\omega$ and both doses use the same
numerical angular integral.}}
\label{tab:revised-field-errors}
{\color{black}\small
\begin{tabular}{lccc}
\toprule
Target & $\dfrac{\|u_\theta-u^*\|_H}{\|u^*\|_H}$
& $\dfrac{\|g_\theta-g^*\|_U}{\|g^*\|_U}$
& $\dfrac{\|D_h(u_\theta)-D_h(u^*)\|_{L^2(\Omega)}}{\|D_h(u^*)\|_{L^2(\Omega)}}$\\[1.2ex]
\midrule
Circle & $4.822\times10^{-2}$ & $4.883\times10^{-2}$ & $1.809\times10^{-2}$\\
Triangle & $3.281\times10^{-2}$ & $3.601\times10^{-2}$ & $8.581\times10^{-3}$\\
\bottomrule
\end{tabular}}
\end{table}

\subsection{\new{\color{black}Example: Manufactured dose-tracking optimum}}
\label{ex:manufactured-dose}
\new{\color{black}
\rev{The field-tracking benchmarks do not test the angular coupling introduced
by $\mathcal O=D$. We therefore construct an exact optimum for the
dose-tracking problem. This permits measurement of the numerical dose
error independently of the mismatch between the optimal dose and the
prescribed target. This supplements the nonradial three-dimensional verification in
Example~\ref{ex:ball-dose}.}

Let $\Omega=B_1(0)\subset\mathbb R^2$ and let $\alpha,\sigma,c>0$ be
constants. With the characteristic coordinates of
Example~\ref{ex:regular-disc-revised},
\[
 d=x\cdot\omega,\qquad \Delta=\sqrt{1-|x|^2+d^2},\qquad
 \tau_-=\Delta+d,\quad\tau_+=\Delta-d,
\]
define
\begin{equation}\label{eq:manufactured-dose-fields}
\begin{aligned}
 u^*(x,\omega)&=\frac{c\Delta}{\alpha\sigma}
       (1-e^{-2\sigma\Delta})e^{-\sigma\tau_-},&
 z^*(x,\omega)&=-\frac c\sigma(1-e^{-\sigma\tau_+}),\\
 g^*(x,\omega)&=\frac{c\Delta}{\alpha\sigma}
       (1-e^{-2\sigma\Delta})\quad\text{on }\Gamma^-,&
 X_T(x)&=D(u^*)(x)+c.
\end{aligned}
\end{equation}
Here $D$ is the unnormalised angular integral used throughout the paper.
Since $\omega\cdot\nabla_x\Delta=0$ and
$\omega\cdot\nabla_x\tau_\pm=\mp1$,
\[
 Tu^*=0,\qquad T^*z^*=-c=D(u^*)-X_T.
\]
On $\Gamma^-$, $d=-\Delta$ and $b=\Delta$, so that
$\gamma^-u^*=g^*$ and $\alpha g^*+b\gamma^-z^*=0$.
On $\Gamma^+$, $\tau_+=0$ and $\gamma^+z^*=0$.
The fields and their transport derivatives are bounded, hence
$(u^*,z^*,g^*)\in W\times W\times U$.
Thus \eqref{eq:manufactured-dose-fields} satisfies the full
dose-tracking KKT system, without adding volume forcing to either
transport equation. By Theorem~\ref{thm:corrected-wellposed}, it is the
unique optimal solution for this target.

The exact dose is radial. Writing
$\Delta_r(\vartheta)=\sqrt{1-r^2\sin{\vartheta}^2}$, we have
\begin{equation}\label{eq:manufactured-dose-integral}
 D(u^*)(r)=\frac c{\alpha\sigma}\int_0^{2\pi}
 \Delta_r(\vartheta)(1-e^{-2\sigma\Delta_r(\vartheta)})
 e^{-\sigma(\Delta_r(\vartheta)+r\cos{\vartheta})}\,d\vartheta.
\end{equation}
In particular,
\begin{equation}\label{eq:manufactured-dose-mismatch}
 \|D(u^*)-X_T\|_{L^2(\Omega)}=c\sqrt\pi.
\end{equation}
This nonzero target mismatch is intrinsic to the optimum. It is not a
discretisation or optimisation error.
}

\subsubsection{\new{\color{black}Discretisation and reference evaluation}}
\new{\color{black}
We take $\alpha=\sigma=c=1$. No radial symmetry is imposed on the
network.

The target is evaluated from \eqref{eq:manufactured-dose-integral}
in double precision, independently of the training directions.
After symmetry reduction to $[0,\pi/2]$, Gauss rules of orders
$128$ and $256$ differ by less than $4\times10^{-15}$ at the
evaluation radii. This is an order-doubling check of the reference
evaluation. The target is therefore not defined using the training
approximation $D_h(u^*)$.
}

\subsubsection{\new{\color{black}Recovery of the optimal dose}}
\new{\color{black}
Table~\ref{tab:manufactured-dose-errors} gives the final errors on the
finer rule. The dose error compares $D_h(u_\theta)$ with the exact
angular integral \eqref{eq:manufactured-dose-integral}. It includes
the numerical angular integration of the learned field.
The residual is $2.502\times10^{-5}$ on the training rule and
$2.530\times10^{-5}$ on the finer rule. On the latter,
the unweighted squared optimality residual is
$8.006\times10^{-6}$, giving $\mathcal J_{U,h}=2.825\times10^{-5}$.
}

\rev{On this same evaluation rule, let $\widehat E_b^2$ be the sum
of the squared absolute state, adjoint and $G_-$ control errors, and
let $\widehat E_U^2$ use the $U$ control error instead. Then
\[
\begin{aligned}
\widehat E_b^2&=9.411\times10^{-6},&
\frac{\widehat E_b^2}{\mathcal J_{b,h}}&=0.3719,\\
\widehat E_U^2&=1.642\times10^{-5},&
\frac{\widehat E_U^2}{\mathcal J_{U,h}}&=0.5813.
\end{aligned}
\]
These are observed final-checkpoint ratios, not estimates of the
uniform reliability constants. The saved loss history does not
contain intermediate solution errors, so it does not establish
residual--error correspondence throughout optimisation.}

\begin{table}[!htbp]
\centering
\caption{\new{\color{black}Manufactured dose tracking: errors at the final
checkpoint, evaluated on the finer deterministic quadrature. Here $v_\theta$ and $v_{\rm ref}$ are the approximate and reference quantities in each row, and $X$ is the indicated norm.}}
\label{tab:manufactured-dose-errors}
\new{\color{black}
\begin{tabular}{lccc}
\toprule
Quantity & Norm & \shortstack{$\|v_\theta-v_{\rm ref}\|_X$}
& \shortstack{$\dfrac{\|v_\theta-v_{\rm ref}\|_X}{\|v_{\rm ref}\|_X}$}\\
\midrule
$u_\theta-u^*$ & $H$ & $1.390\times10^{-3}$ & $8.375\times10^{-4}$\\
$z_\theta-z^*$ & $H$ & $2.274\times10^{-3}$ & $9.059\times10^{-4}$\\
$g_\theta-g^*$ & $U$ & $3.053\times10^{-3}$ & $1.202\times10^{-3}$\\
$g_\theta-g^*$ & $G_-$ & $1.519\times10^{-3}$ & $6.370\times10^{-4}$\\
$D_h(u_\theta)-D(u^*)$ & $L^2(\Omega)$ & $7.438\times10^{-4}$ & $2.042\times10^{-4}$\\
\bottomrule
\end{tabular}}
\end{table}

\begin{figure}[!htbp]
\centering
\includegraphics[width=.98\linewidth]{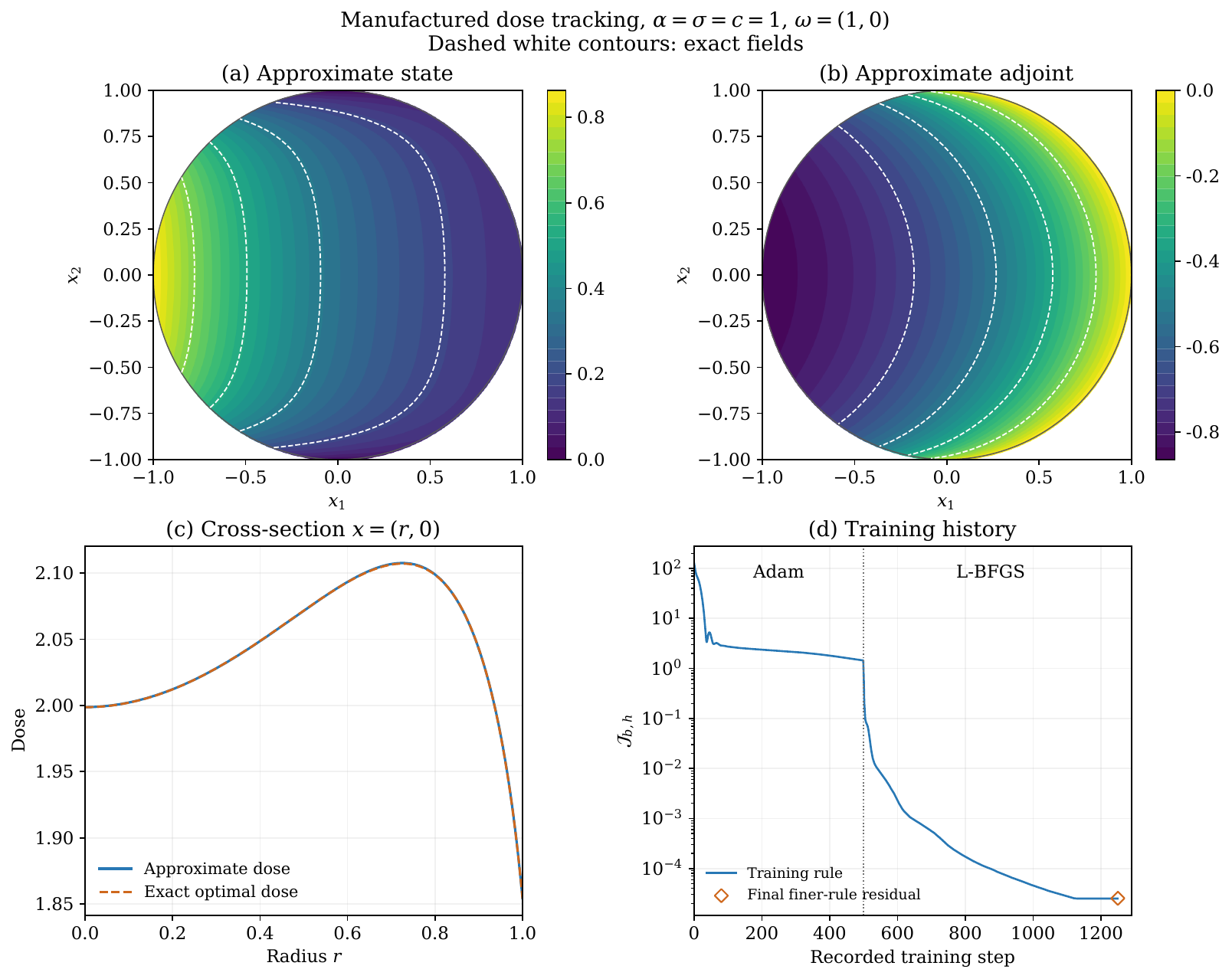}
\caption{\new{\color{black}Manufactured dose tracking with
$\alpha=\sigma=c=1$. Top: \rev{Approximate} state and adjoint for
$\omega=(1,0)$. Dashed white curves are contours of the exact fields.
Bottom left: computed and exact optimal doses along $x=(r,0)$. The computed dose uses $150$ angular
nodes and direct network evaluation, without radial averaging.
Bottom right: the full recorded training loss and the final residual
on the finer rule.}}
\label{fig:manufactured-dose-verification}
\end{figure}
\new{\color{black}
\rev{Figure~\ref{fig:manufactured-dose-verification} shows the directional
fields and the radial optimal dose. The target mismatch is $1.772440$,
close to $\sqrt\pi\simeq1.772454$, while the absolute dose error is
$7.438\times10^{-4}$. Together with Table~\ref{tab:manufactured-dose-errors},
this verifies recovery of the dose-tracking optimum at $\alpha=1$.}
}
{\color{black}
\subsection{\rev{Example: Field and dose tracking of a circular target}}
\label{ex:localised}
Let $\Omega=B_1(0)\subset\mathbb R^2$, $T=B_{0.3}(0)$ and
$q=\chi_T$. We take $\sigma=1$ and $\alpha=10^{-4}$ and compare
angular-field tracking with direct dose tracking on the same geometry.
The objectives are
\begin{equation}\label{eq:disc-matched-objectives}
\begin{aligned}
F_{\mathrm{field}}(u,g)
 &=\frac12\|u-q\|_{L^2(\Omega\times S^1)}^2
   +\frac\alpha2\|g\|_U^2,\\
F_{\mathrm{dose}}(u,g)
 &=\frac12\|D(u)-q\|_{L^2(\Omega)}^2
   +\frac\alpha2\|g\|_U^2,
\qquad D(u)(x)=\int_{S^1}u(x,\omega)\,d\omega.
\end{aligned}
\end{equation}
Both problems satisfy $\omega\cdot\nabla u+\sigma u=0$,
$\gamma^-u=g$, $\gamma^+z=0$ and $\alpha g+b\gamma^-z=0$,
where $b=|\omega\cdot n|$ and
$U=L^2(\Gamma^-,\,dS_x\,d\omega)$. The adjoint sources are
$u-q$ for field tracking and $D(u)-q$ for dose tracking.
The field target is unscaled. Thus the two objectives prescribe the
same indicator for different observations, and their optima need
not have the same angular integral.

\rev{The two computations have identical initial network parameters and
quadrature nodes. Their common settings are given in
Table~\ref{tab:experiment-settings}.}

\begin{figure}[htbp]
\centering
\includegraphics[width=.86\linewidth]{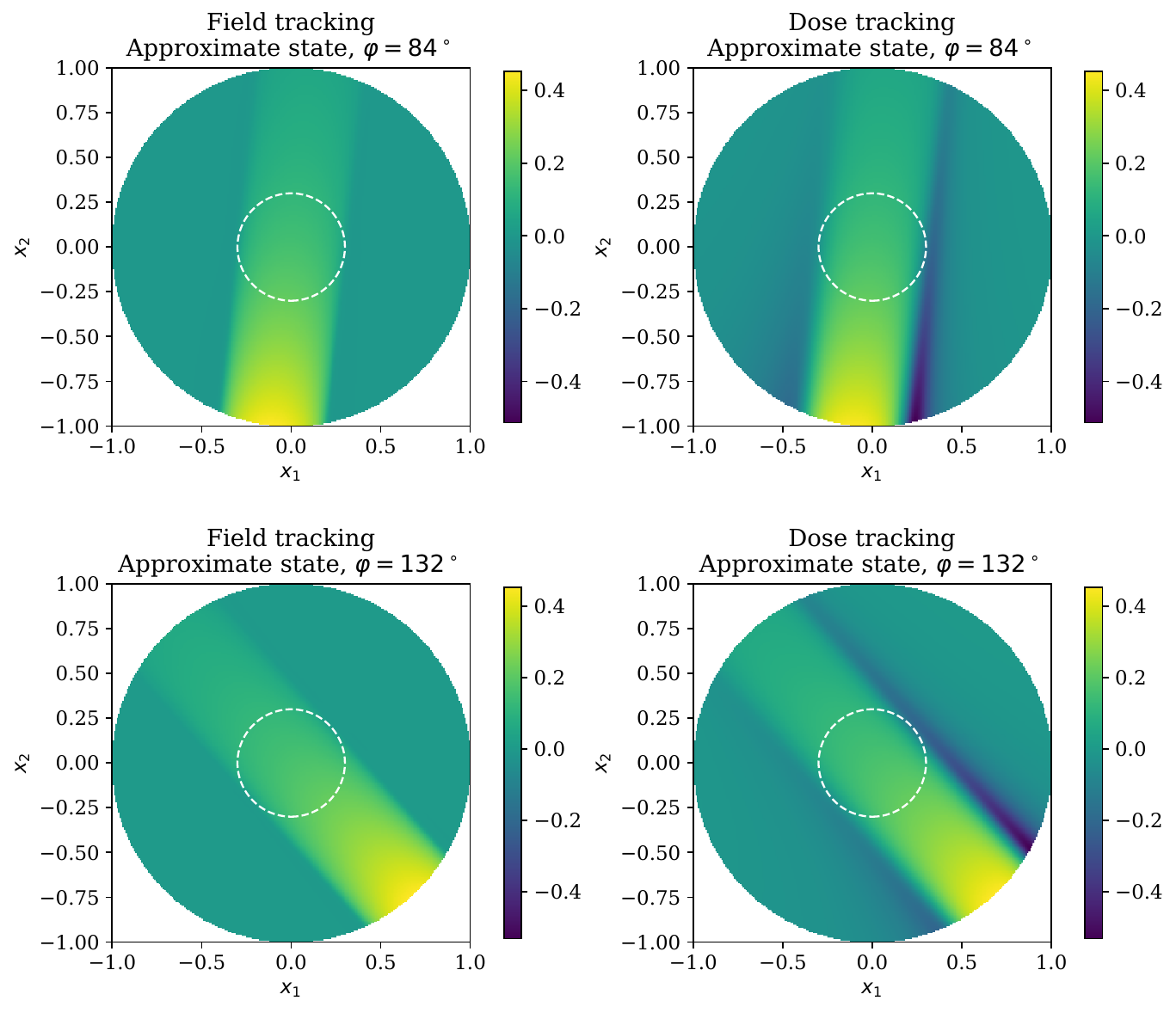}
\caption{\rev{Circular target in the unit disc. Approximate states
for field tracking and dose tracking at $84^\circ$ and $132^\circ$,
evaluated directly from the final checkpoints. Each row uses a common
colour scale. The dashed circle marks $\partial T$.}}
\label{fig:disc-matched-states}
\end{figure}

\rev{Figure~\ref{fig:disc-matched-states} shows the directional fields for
the two objectives. The field-tracking optimum follows from
\eqref{eq:rev-characteristic-control}, while dose tracking couples the directions through the adjoint source $D(u)-q$.}
\begin{figure}[htbp]
\centering
\includegraphics[width=.95\linewidth]{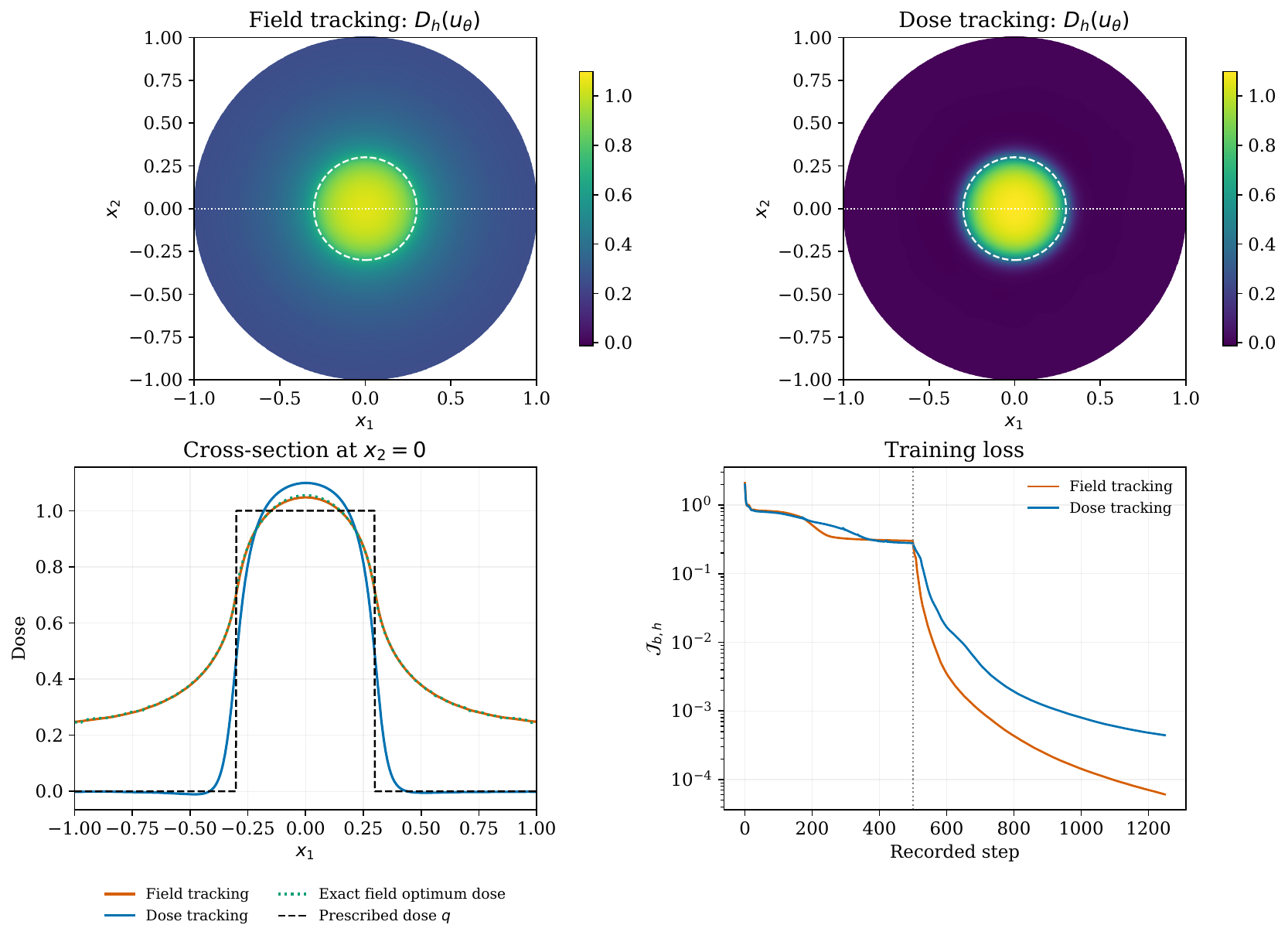}
\caption{\rev{Field and dose tracking on common uniform quadrature
in the unit disc. The upper panels show their angular integrals on a
common colour scale. The dashed circle marks $\partial T$.
The lower panels show direct checkpoint evaluations along $x_2=0$
and the training histories. The exact field-optimum dose is obtained
by integrating the characteristic state over the same $150$
evaluation directions. The indicator $q$ is the prescribed dose
for the dose-tracking problem.}}
\label{fig:sharp-dose}
\label{fig:disc-matched-doses}
\end{figure}
The relative target mismatch $e_q$ and the residuals for each optimality system are reported in Table~\ref{tab:disc-matched}.
\begin{table}[htbp]
\centering
\caption{\rev{Final diagnostics for the common-setting disc computations.
The relative target mismatch is
$e_q=\|D_h(u_\theta)-q\|_{L^2(\Omega)}/\|q\|_{L^2(\Omega)}$.
The control norm and target mismatch use the separate evaluation rule.
The residuals correspond to the respective field and dose KKT systems.}}
\label{tab:disc-matched}
{\color{black}\small
\begin{tabular}{lrrrr}
\toprule
Objective & $e_q$ & $\|g_\theta\|_U^2$
& $\mathcal J_{b,h}$ & Evaluation $\mathcal J_b$\\
\midrule
Field & $1.05939$ & $0.53327$ & $6.106\times10^{-5}$ & $4.292\times10^{-3}$\\
Dose & $0.27574$ & $0.82241$ & $4.439\times10^{-4}$ & $1.604\times10^{-2}$\\
\bottomrule
\end{tabular}
}
\end{table}

A broader angular integral for field tracking and a more concentrated dose for direct dose
tracking are shown in Figure~\ref{fig:disc-matched-doses} . The field-optimum curve distinguishes the broader transport response from the approximation error. In particular, $e_q$ is a target mismatch rather than an error to the field-tracking optimum. For dose tracking, the transition across $\partial T$
remains smooth and the relative target mismatch is $0.27574$. The larger residuals on the evaluation rule are mainly due to the adjoint component, whose source contains the discontinuous target. The two training histories concern different KKT systems and are interpreted with their respective objectives.

}

 \subsection{\rev{Residual approximation under Monte Carlo quadrature}}
\label{sec:mc-residual-experiment}
\new{\rev{We test Monte Carlo quadrature with fixed fields and an exactly known
residual integral. On the unit disc, let $(u^*,z^*,g^*)$ be the solution
of Example~\ref{ex:regular-disc-revised}, with $\alpha=\sigma=1$, and set}
}\begin{equation}\new{\label{eq:mc-perturbation}
 u_\varepsilon=u^*+\varepsilon x\cdot\omega,\qquad
 z_\varepsilon=z^*+\varepsilon|x|^2,\qquad
 g_\varepsilon=g^*+\varepsilon(1+x_1),
 \qquad \varepsilon=0.05.
}\end{equation}

\new{Here $\varepsilon$ is a fixed perturbation amplitude that scales the
residual functional by $\varepsilon^2$. The target is unchanged.
Writing $d=x\cdot\omega$, the two interior
residuals are $\varepsilon(1+d)$ and
$\varepsilon(|x|^2-3d)$. On $\Gamma^-$, the optimality and inflow
residuals are $\varepsilon(1+x_1+b)$ and
$-\varepsilon(1+x_1+b)$, respectively, while the outflow residual is
$\varepsilon$. Direct integration therefore gives
}\begin{equation}\new{\label{eq:mc-reference}
 \mathcal J_b(u_\varepsilon,z_\varepsilon,g_\varepsilon)
 =\frac{\varepsilon^2}{6}(35\pi^2+64\pi)
 \simeq 0.2277075349.
}\end{equation}

\new{For each $M$, we take $M$ independent uniform samples in
$\Omega\times S^1$ and, independently, $M$ samples from the probability
measure $(4\pi)^{-1}b\,dS\,d\omega$ on $\Gamma^-$. The sample means
are multiplied by $2\pi^2$ and $4\pi$, respectively. The constant
outflow contribution is integrated exactly. We use
$M=2^8,2^{10},\ldots,2^{18}$ and repeat the computation independently
$256$ times. Within each repetition, the smaller sample sets are
prefixes of the same unsorted independent sample pools.
}

\new{\rev{Figure~\ref{fig:mc-controlled} shows the root mean square of
$\mathcal J_{b,M}-\mathcal J_b$ over the repetitions. It decreases
from $6.89\times10^{-3}$ to $2.28\times10^{-4}$ over the tested
range, with a fitted exponent $0.492$. This agrees with the
$M^{-1/2}$ dependence for fixed fields in \eqref{eq:mc-rms-bound}.
Keeping the fields fixed makes this a direct test of
Lemma~\ref{lem:mc-rms}: the residual density and its variance remain
unchanged as $M$ increases. The repeated computations estimate the
mean-square sampling error, whose rate need not be visible in a
single realisation. The experiment thus isolates the quadrature
contribution in the error analysis. During network training the fields
depend on the samples, and the uniform estimate of
Proposition~\ref{prop:uniform-quadrature} is needed to control that
dependence.}
}\begin{figure}[htbp]
\centering
\new{\includegraphics[width=0.75\linewidth]{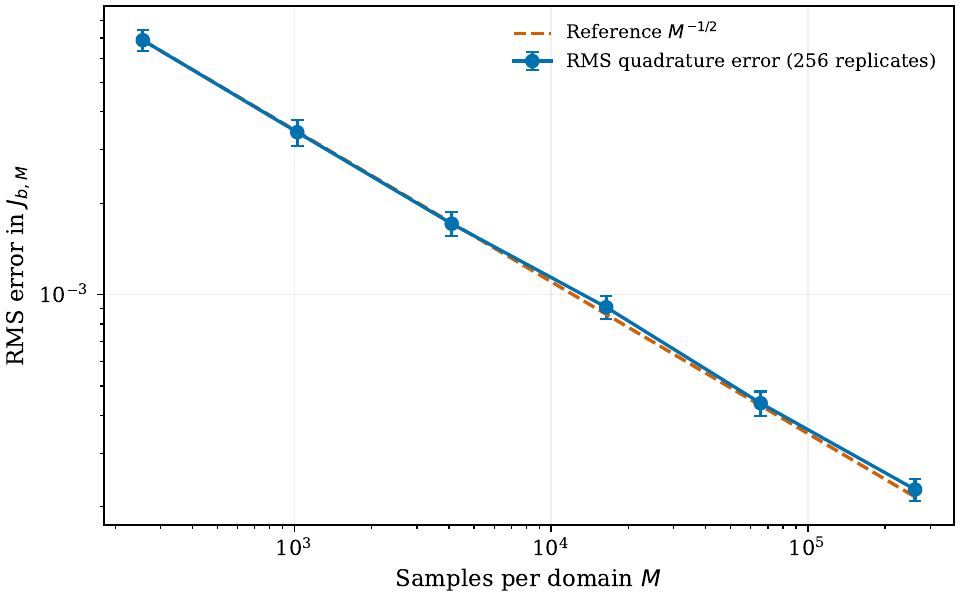}
}\caption{\new{Monte Carlo approximation of the residual for the fixed fields
\eqref{eq:mc-perturbation}. The error is computed against the exact
value \eqref{eq:mc-reference}. Each point uses $256$ independent
repetitions. The bars show $1.96$ estimated standard errors of the
reported root mean square, computed by the delta method. The dashed
line has slope $-1/2$. Here $M$ denotes the number of samples in each
of the interior and inflow domains.}}
\label{fig:mc-controlled}
\end{figure}

\color{black}

\section{\rev{Proofs of the Monte Carlo quadrature estimates}}
\label{app:quadrature-proofs}
\rev{We use the residual densities and sampling measures introduced
in Subsection~\ref{subsec:mc-quadrature}. The estimates allow samples
to be shared between different residual terms.}

{\color{black}
\begin{proof}[\rev{Proof of Lemma~\ref{lem:mc-rms}}]
\rev{Set $I_k=\int_{D_k}F_k(V,y)\,d\mu_k(y)$ and
$X_{k,j}=R_k(V,Y_{k,j})-I_k$. For each fixed $k$, independence and
centering give
}\[
\rev{\mathbb{E}\rev{[}X_{k,j}\rev{]}=0,\qquad
\mathbb{E}\rev{[}X_{k,j}X_{k,\ell}\rev{]}=0\quad(j\ne\ell),\qquad
\mathbb{E}\rev{[}X_{k,j}^2\rev{]}=\operatorname{Var}_{\nu_k}(R_k(V,\cdot)).}
\]
\rev{Consequently, with $\Delta_k=M_k^{-1}\sum_jX_{k,j}$,
}\[
\rev{\mathbb{E}\rev{[}\Delta_k\rev{]}=0,\qquad
\mathbb{E}\rev{[}|\Delta_k|^2\rev{]}
=\frac{1}{M_k^2}\sum_{j=1}^{M_k}\mathbb{E}\rev{[}X_{k,j}^2\rev{]}
=\frac{\operatorname{Var}_{\nu_k}(R_k(V,\cdot))}{M_k}.}
\]
\rev{Since $\mathcal J_{b,M}(V)-\mathcal J_b(V)=\sum_k\Delta_k$,
unbiasedness follows by linearity of expectation. Minkowski's
inequality yields
}\[
\rev{\left(\mathbb{E}\rev{\bigl[}\left|\sum_k\Delta_k\right|^2\rev{\bigr]}\right)^{1/2}
\le\sum_k(\mathbb{E}\rev{[}|\Delta_k|^2\rev{]})^{1/2},}
\]
\rev{which proves \eqref{eq:mc-rms-bound} without requiring
independence between different residual terms.}
\end{proof}
}

{\color{black}
\begin{proof}[\rev{Proof of Proposition~\ref{prop:corrected-validation}}]
Define
\[
\rev{\Delta_k=\frac1{M_k}\sum_{j=1}^{M_k}R_k(V,Y_{k,j})
-\int R_k(V,y)\,d\nu_k(y).}
\]
\rev{Hoeffding's inequality \cite{revHoeffding1963} gives, for
$B_k>0$ and $t>0$,
}\[
\rev{\mathbb P(|\Delta_k|>t)
\le 2\exp\left(-\frac{2M_kt^2}{B_k^2}\right).}
\]
\rev{Choose $t_k=B_k\sqrt{\log(2K/\delta)/(2M_k)}$.
If $B_k=0$, then $\Delta_k=t_k=0$ almost surely. Hence
}\[
\rev{\mathbb P\bigl(\exists k:\ |\Delta_k|>t_k\bigr)
\le\sum_{k:B_k>0}2\exp\left(-\frac{2M_kt_k^2}{B_k^2}\right)
\le\delta.}
\]
\rev{On the complementary event,
}\[
\rev{|\mathcal J_{b,M}(V)-\mathcal J_b(V)|
=\left|\sum_k\Delta_k\right|\le\sum_k t_k.}
\]
\rev{This proves \eqref{eq:mc-concentration}. Substitution into
\eqref{eq:mc-aposteriori-decomp} gives \eqref{eq:corrected-validation}.}
\end{proof}
}

{\color{black}
\begin{proof}[\rev{Proof of Proposition~\ref{prop:uniform-quadrature}}]
\rev{Let $\{\theta_\ell\}_{\ell=1}^{\mathcal N_N(\eta)}$ be the net
and define
}\[
\rev{\Delta_k(\theta)=\frac1{M_k}\sum_{j=1}^{M_k}R_k(\theta,Y_{k,j})
                    -\int R_k(\theta,y)\,d\nu_k(y).}
\]
\rev{For $\|\theta-\theta_\ell\|\le\eta$, the common Lipschitz bound
implies, almost surely,
}\[
\rev{|\Delta_k(\theta)-\Delta_k(\theta_\ell)|
\le\frac1{M_k}\sum_{j=1}^{M_k}L_k\eta+\int L_k\eta\,d\nu_k
=2L_k\eta.}
\]
\rev{It follows that
}\[
\rev{\sup_{\theta\in\Theta_N}\left|\sum_k\Delta_k(\theta)\right|
\le 2\eta\sum_kL_k+\sum_k\max_\ell|\Delta_k(\theta_\ell)|.}
\]
\rev{Set
$t_k=B_k\sqrt{\log(2K\mathcal N_N(\eta)/\delta)/(2M_k)}$.
Applying Hoeffding's inequality at the net points gives
}\[
\rev{\mathbb P\bigl(\exists k,\ell:\ |\Delta_k(\theta_\ell)|>t_k\bigr)
\le\sum_{k:B_k>0}\sum_{\ell=1}^{\mathcal N_N(\eta)}
2\exp\left(-\frac{2M_kt_k^2}{B_k^2}\right)\le\delta.}
\]
\rev{Terms with $B_k=0$ vanish. On the complementary event, inserting
these thresholds into the preceding supremum bound proves
\eqref{eq:corrected-uniform-MC}.}
\end{proof}
}

 \printbibliography

\end{document}